\numberwithin{equation}{section}
\newtheorem{theorem}{Theorem}[section]
\newtheorem{lemma}[theorem]{Lemma}
\newtheorem{proposition}[theorem]{Proposition}
\newtheorem{corollary}[theorem]{Corollary}
\newtheorem{definition}[theorem]{Definition}
\newtheorem{remark}[theorem]{Remark}
\newtheorem*{Conjecture}{Conjecture}
\renewcommand\tilde{\widetilde}
\newcommand\calI{\mathcal{I}}
\newcommand\calC{\mathcal{C}}
\def\R{\mathbb{R}}
\def\pa{\partial}
\newcommand\calA{\mathcal{A}}
\newcommand\m{\mu}
\def\X{\mathbb{X}}
\def\E{\mathcal{E}}
\def\Z{\mathbb{Z}}
\def\N{\mathbb{N}}
\def\M{\mathcal{M}}
\def\H{\mathcal{H}}
\def\F{\mathcal{F}}
\def\LM#1{\hbox{\vrule width.2pt \vbox to#1pt{\vfill \hrule width#1pt
			height.2pt}}}
\def\LL{{\mathchoice {\>\LM7\>}{\>\LM7\>}{\,\LM5\,}{\,\LM{3.35}\,}}}
\def\restr{{\LL}}
\renewcommand{\phi}{\varphi}
\def\Div{\nabla \cdot\,}
\def\1{\mathbf{1}}
\def\XXint#1#2#3{{\setbox0=\hbox{$#1{#2#3}{\int}$ }
		\vcenter{\hbox{$#2#3$ }}\kern-.57\wd0}}
\def\eps{\varepsilon}
\renewcommand{\subset}{\subseteq}
\def\lt{\left}
\def\rt{\right}
\def\les{\lesssim}
\def\ges{\gtrsim}
\def\what{\widehat}
\def\dim{{\rm dim}}
\def\dimlX{\underline{\dim}_{\X}\,}
\newcommand{\nper}[1]{|#1|_{\rm per}}
\def\dimX{\dim_{\X}\,}
\def\Ecin{E_{\rm cin}}
\def\dimuX{\overline{\dim}_{\X}\,}
\def\per{\rm per}
\newcommand{\dw}{\downarrow}
\newcommand{\up}{\uparrow}
\def\spt{{\rm Spt }\,}
\def\Eund{\E^{\rm 1d}}
\def\cL{\mathcal{L}}
\def\dotEcin{\dot{E}_{\rm cin}}
\title[From energy bounds to dimensional estimates]{From energy bounds to dimensional estimates in a branched transport model for type-I superconductors}
\author[G. De Philippis]{Guido De Philippis}
\address{Courant Institute of Mathematical Sciences, NYU ,251 Mercer St., New York, NY 10012, USA.}
\email{guido@cims.nyu.edu}
\author{Michael Goldman}
\address{CMAP, CNRS, \'Ecole polytechnique, Institut Polytechnique de Paris, 91120 Palaiseau,
France}
\email{michael.goldman@cnrs.fr}
\author{Berardo Ruffini}
\address{Department of Mathematics, University of Bologna, 40126 Bologna, Italy} 
\email{berardo.ruffini@unibo.it}
\begin{document}
\begin{abstract}
We consider a branched transport type problem which describes the magnetic flux through type-I superconductors in a regime of very weak applied fields. At the boundary of the sample, deviation of the magnetization from being uniform is penalized through a negative Sobolev norm.
% A central object of study is the trace of the magnetization at the boundary of the sample.
% In this model, non-uniform magnetization is penalized through a negative Sobolev norm.
It was conjectured by S. Conti, F. Otto and S. Serfaty that as a result, the trace of the magnetization on the boundary should be a measure of Hausdorff dimension $8/5$. We prove that this conjecture is equivalent to the proof of local energy bounds with an optimal exponent. We then obtain local bounds which are however not optimal. These yield improved lower bounds on the dimension of the irrigated measure but unfortunately does not improve on the trivial upper bound. In order to illustrate the dependence of this dimension on the choice of penalization,  we consider in the last part of the paper a toy model where the boundary energy is given by a Wasserstein distance to Lebesgue. In this case minimizers are finite graphs and thus the trace is atomic.
	\end{abstract} 
	\maketitle
	
	%\tableofcontents

\section{Introduction}
It has been observed experimentally \cite{Proz,Proal,ProzHo} that when subject to an external magnetic field, complex patterns appear at the boundary of type-I superconductors. It can be argued,  \cite{Proal,ChokKoOt}, that in the regime of vanishing external field these reflect the structure of low energy states. Based on the Ginzburg-Landau theory, it is known since the work of Landau \cite{Landau} that inside the sample these should be branching patterns. This has first been rigorously justified at the level of scaling laws in \cite{ChokKoOt,ChokConKoOt,CoOtSer}. Later on, in the so-called uniform branching regime (see below), a branched transport type problem has been derived from the full Ginzburg-Landau energy in \cite{CGOS}. This  gives a full justification of the emergence of branching patterns in this part of the phase diagram. In a simplified two-dimensional setting, the minimizers of the reduced problem (for thick enough samples) have been completely characterized in \cite{G}. They are self-similar, branching trees.\\
In this paper we consider the cross-over regime between the uniform and non-uniform branching regimes. Starting from the corresponding reduced branched transport model (see \eqref{def:enerintro} below) we aim at  a qualitative description of the irrigated measure. In particular we are interested in its Hausdorff dimension. Based on heuristics that we recall in Section \ref{sec:heur}, it has been conjectured in \cite{privateCOS} that this measure should be of dimension $8/5$. Our main result, Theorem \ref{thm:main1}, reduces the proof of this conjecture to the proof of a local scaling law for the energy inside the sample. This is complemented in Theorem \ref{thm:main2} by a (a priori non sharp) local scaling law which albeit not proving the  conjecture,  reduces the range of possible dimensions to the interval $[3/2,2]$.  By itself, these results can be seen as a justification of the belief that the complexity of the patterns observed at the surface of the sample are related to the patterns living inside the sample.

\subsection{The model}
As in \cite{ChokKoOt,ChokConKoOt,CoOtSer,CGOS}, we consider the simplest possible geometric configuration of a plate $Q_T=(-T,T)\times Q$ with thickness $2T$ and periodic lateral boundary conditions i.e. $Q=(\R/\Z)^2$ subject to a perpendicular external magnetic field. Having \cite{CGOS} in mind we expect that the normal phase concentrate on branched structures  in the regime of vanishing external magnetic field. We thus represent it by a measure $\mu=dt \otimes \mu_t$ such that for a.e. $t\in (-T,T)$,
\[
 \mu_t=\sum_i \phi_i \delta_{X_i}
\]
where $X_i\in Q$ and $\phi_i\ge 0$ are such that $\sum_i \phi_i=1$. A bit formally (see Section \ref{sec:internal} for a precise definition) we set for $-T\le a<b\le T$
\begin{equation}\label{defI}
 I(\mu,(a,b))=\int_{a}^b \sum_i\lt[\phi_i^{\frac{1}{2}}+ \phi_i |\dot{X}_i|^2\rt] dt,
\end{equation}
where $\dot{X}_i$ denotes the derivative of the map $t\mapsto X_i(t)$. When $b=-a=T$, we simply write
\[
 I(\mu)=I(\mu,(-T,T)).
\]
As explained in \cite[Section 5]{CGOS}, this is a variant of the branched transport functionals which have received a lot of attention in the last couple of decades, see e.g.  \cite{BeCaMo,MSM,BrBuS,BraWirth,BraSol,Colo,pegfract,pegLag,SanLan}. Using the short-hand notation (with $H^{-1/2}=H^{-1/2}(Q)$)
\[
 \|\mu_{\pm T}-1\|_{H^{-\frac{1}{2}}}^2=\|\mu_{- T}-1\|_{H^{-\frac{1}{2}}}^2+\|\mu_{ T}-1\|_{H^{-\frac{1}{2}}}^2,
\]
 we then consider for $\lambda>0$ the energy
\begin{equation}\label{def:enerintro}
 \E_{\lambda,T}(\mu)=I(\mu) + \lambda \|\mu_{\pm T}-1\|_{H^{-\frac{1}{2}}}^2.
 \end{equation}
By a simple symmetrization argument we will always consider symmetric minimizers of $\E_{\lambda,T}$, see \cite{CGOS}.\\

In the case $\lambda=\infty$, that is under the constraint that $\mu_{\pm T}=1$ (hence the name uniform branching regime), this functional has been rigorously derived in \cite{CGOS} from the full Ginzburg-Landau energy.
In the notation of \cite{CGOS}, if $\beta$ represents the strength of the external magnetic field and $\alpha^{-1}$ the coherence length, the uniform branching regime corresponds to $\alpha^{-2/7}\ll \beta\ll1$. Formal computations suggest that  \eqref{def:enerintro} should be the appropriate reduced energy in the crossover regime $\alpha^{-2/7}\sim \beta\ll1$. We leave the rigorous derivation of the model for future work and  take instead \eqref{def:enerintro} as our starting point. Based on the constructions for the upper bounds in \cite{ChokKoOt,CoOtSer}, the following conjecture was made in \cite{privateCOS}:
\begin{Conjecture}
 If $\mu$ is a minimizer of \eqref{def:enerintro}, then $\mu_{\pm T}$ is of dimension $8/5$.
\end{Conjecture}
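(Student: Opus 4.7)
The plan is to establish the conjecture via a two-sided argument: a matching upper bound $\dim_{\H}(\mu_{\pm T})\le 8/5$ by explicit construction, and a lower bound $\dim_{\H}(\mu_{\pm T})\ge 8/5$ by means of a sharp localized energy estimate, in the spirit of the reduction afforded by Theorem \ref{thm:main1}.

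For the upper bound I would construct a self-similar competitor. Starting from the uniform bulk at $t=0$, at each generation $k$ the flow is split into $b$ children displaced laterally by a distance $\sigma^k$, with $b$ and $\sigma$ tuned so that $\log b/\log\sigma=8/5$. A scale-by-scale calculation, balancing the irrigation cost $\phi_k^{1/2}L_k$ against the kinetic cost $\phi_k|\dot X|^2 L_k\sim \phi_k\sigma^{2k}/L_k$ with the vertical spacing $L_k$ optimized at each generation, together with an estimate of the residual $H^{-1/2}$ deviation of the atomic trace, should select $d=8/5$ as the dimension at which the total energy is minimized. The boundary trace of the resulting competitor would then be supported on a Cantor-like set of dimension exactly $8/5$, giving the upper bound.

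For the lower bound, the strategy is to prove that for any minimizer the localized energy over a small cylinder satisfies a sharp scaling law
\begin{equation*}
\E_{\lambda,T}\bigl(\mu, B_r\times(-T,-T+s)\bigr)\gtrsim f(m,r,s),\qquad m=\mu_T(B_r),
\end{equation*}
with the optimal right-hand side $f$ predicted by the heuristic. Once $f$ is known to be sharp, a rescaling combined with a Frostman-type mass distribution argument would translate the local bound into the pointwise density estimate $\mu_T(B_r)\lesssim r^{8/5}$ on every ball $B_r$, and hence $\dim_{\H}(\mu_{\pm T})\ge 8/5$.

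The main obstacle is the sharpness of this local bound. Theorem \ref{thm:main2} already provides a non-sharp version, sufficient to conclude $\dim_{\H}(\mu_{\pm T})\ge 3/2$ but with a suboptimal exponent. To push the exponent up to the conjectural value, one must rule out subtle ``cheating'' patches of $\mu$ which look locally sub-optimal but are nonetheless consistent with a globally near-optimal branched tree. I would expect the proof to require a delicate multi-scale induction together with a rigidity statement asserting that any near-minimizer of the local energy already exhibits the expected self-similar branching at the relevant scales, reminiscent of $\varepsilon$-regularity theorems in geometric measure theory but in a highly anisotropic, non-local setting. It is here that I anticipate the bulk of the technical difficulty to lie, and it is plausibly the reason the authors can only reduce the conjecture to such a bound rather than prove it outright.
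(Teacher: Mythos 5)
This statement is precisely the open conjecture of the paper: the paper does not prove it, it only proves (Theorem \ref{thm:main1}) that it \emph{would follow} from the local energy bound \eqref{hypIepsintro} with the optimal exponent $\beta=3/7$, and (Theorem \ref{thm:main2}) the non-sharp bound $\beta=1/3$, which yields only $\dim\mu_T\in[3/2,2]$. Your proposal is likewise a programme rather than a proof: everything hinges on the ``sharp localized energy estimate'' which you yourself flag as the main obstacle and do not establish, so the conjecture is not proved by your argument any more than by the paper. Beyond that, there is a concrete logical flaw in your upper-bound step: constructing a self-similar \emph{competitor} whose boundary trace is a Cantor-like set of dimension $8/5$ (essentially the construction behind Proposition \ref{prop:scaling}) gives information about the minimal energy, not about minimizers, and in particular does not show $\overline{\dim}\,\mu_T\le 8/5$ for an actual minimizer of \eqref{def:enerintro}. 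In the paper the dimension upper bound comes from Theorem \ref{thm:upper}, an interpolation/quantization estimate which uses no minimality at all but instead converts the (hypothetical) decay $I(\mu,\eps)\lesssim\eps^{3/7}$ into $\overline{\dim}\,\mu_T\le g(3/7)=8/5$; without that decay your construction proves nothing about the trace of a minimizer.

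Your lower-bound route also differs from the paper's and asks for strictly more than what the reduction requires. The paper's lower bound (Theorem \ref{thm:lower}) exploits minimality through a translation/mollification competitor and the semigroup characterization of $H^{-\gamma}$ norms, needing only a slab-localized (in time) upper bound $I(\mu,\eps)\lesssim\eps^\beta$; what you propose is a lower bound on the energy localized in both space and time, of the form $\E_{\lambda,T}(\mu,B_r\times(-T,-T+s))\gtrsim f(m,r,s)$, combined with a mass-distribution argument. Such space-time localized bounds are exactly what the authors point out (in the remark following Theorem \ref{thm:main2}, citing the Conti-type localization literature) as a harder, currently unavailable refinement, and you give no argument for the ``rigidity/$\eps$-regularity'' input you invoke. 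So the genuine gap is twofold: the key sharp estimate is assumed rather than proved, and the part you claim to obtain unconditionally (the dimension upper bound via an explicit competitor) does not follow from the stated reasoning.
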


\subsection{Results}

Setting for $\eps\in (0,T)$
\begin{equation}\label{defImueps}
 I(\mu,\eps)=I(\mu, (T-\eps,T))
\end{equation}
to be the energy concentrated near the boundary and referring to Section \ref{sec:Sob} for the precise definition of lower and upper Hausdorff dimensions of a measure, our main result reads as follows:

 \begin{theorem}\label{thm:main1}
 	Let $\mu$ be a symmetric minimizer of \eqref{def:enerintro} i.e. with $\mu_{-t}=\mu_t$ for $t\in[0,T]$. If $\beta\in(0,1)$ is such that
 	\begin{equation}\label{hypIepsintro}
 	\limsup_{\eps\to0^+} \frac{I(\mu,\eps)}{\eps^\beta} <\infty,
 	\end{equation}
 	then
 	\begin{equation}\label{eq:lowupbounds}
 	f(\beta)=\frac{1+3\beta}{1+\beta}\le \underline{\dim}\, \mu_T\le  \overline{\dim}\, \mu_T\le \frac{4(1-\beta)}{1+\beta}=g(\beta).
 	\end{equation}
 \end{theorem}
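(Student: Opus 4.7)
The plan is to derive both bounds in \eqref{eq:lowupbounds} from two pointwise density estimates on the trace $\mu_T$. By the mass-distribution / Billingsley-type lemmas alluded to in Section~\ref{sec:Sob}, a \emph{concentration bound} of the form $\mu_T(B_r(x))\les r^{f(\beta)}$ valid $\mu_T$-a.e.\ produces $\underline{\dim}\,\mu_T\ge f(\beta)$, while a \emph{diffusion bound} $\mu_T(B_r(x))\ges r^{g(\beta)}$ produces $\overline{\dim}\,\mu_T\le g(\beta)$. Both estimates are to be obtained via a competitor argument against the minimality of $\mu$ in the thin slab $S_\eps:=(T-\eps,T)\times Q$, using \eqref{hypIepsintro} together with a Chebyshev-type selection of a favorable cross-section inside the slab to control the cost of the internal modification.

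For the concentration bound, I would fix $x_0\in\spt\mu_T$, set $m=\mu_T(B_r(x_0))$ and, for a second length-scale $R\in[r,1)$ to be optimized, construct $\tilde\mu$ that coincides with $\mu$ outside $S_\eps$ and inside $S_\eps$ reroutes those trajectories of $\mu$ ending in $B_r(x_0)$ so as to redistribute the mass $m$ over $B_R(x_0)$ at $t=T$. Thanks to the Riesz-kernel representation $\|f\|_{H^{-1/2}}^2\sim\iint f(x)f(y)|x-y|^{-1}\dv x\dv y$ valid in dimension two, the contribution of a lump of mass $m$ of radius $r$ to $\|\mu_T-1\|^2_{H^{-1/2}}$ is of order $m^2/r$; hence the boundary gain of the rearrangement is of order $\lambda(m^2/r-m^2/R)$, and minimality yields
\begin{equation*}
 \lambda\lt(\frac{m^2}{r}-\frac{m^2}{R}\rt)\les I\bigl(\tilde\mu,(T-\eps,T)\bigr)-I(\mu,\eps).
\end{equation*}
The right-hand side is bounded by the extra cost of the rerouted trajectories, which decomposes into a branching contribution (of order $\eps\,m^{1/2}N^{1/2}$, where $N$ is the number of effective atoms of the new boundary configuration) and a kinetic contribution (of order $mR^2/\eps$). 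An intermediate height $s\in(T-\eps,T-\eps/2)$ at which $\sum_i \phi_i^{1/2}(s)\les\eps^{\beta-1}$ is extracted from \eqref{hypIepsintro} by Chebyshev, and joint optimization in $\eps$, $R$ and $N$ produces the exponent $m\les r^{f(\beta)}$ with $f(\beta)=(1+3\beta)/(1+\beta)$.

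For the diffusion bound a dual competitor is used: if $\mu_T(B_r(x_0))$ were much smaller than $r^{g(\beta)}$ on a positive-$\mu_T$-measure set, one collects nearby mass into a more concentrated configuration near $x_0$, exploiting the subadditivity of $\phi\mapsto\phi^{1/2}$ to save on branching while paying on $H^{-1/2}$. The same Chebyshev extraction together with the Wasserstein closeness $W_2(\mu_T,\mu_{T-\eps})\les\eps^{(1+\beta)/2}$ (a direct consequence of $\int_{T-\eps}^T\sum_i\phi_i|\dot X_i|^2\les\eps^\beta$) quantifies the cost of this merge, and after optimization yields $m\ges r^{g(\beta)}$.

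The hardest step is the construction and control of the competitor in the slab: one must produce mass-preserving trajectories that continuously match the unchanged piece of $\mu$ at $t=T-\eps$, and \eqref{hypIepsintro} has to be used not merely as an integral bound but as structural information on intermediate cross-sections, interpolated through explicit Landau-type branching trees whose branching and kinetic costs are tracked separately and balanced by Young's inequality. A secondary technical point is the passage from pointwise density estimates to the Hausdorff-dimension statements, for which the exceptional sets where the density estimates may fail are handled via a Vitali-type covering; the symmetry assumption $\mu_{-t}=\mu_t$ enters only mildly, reducing the analysis to the single boundary $t=T$.
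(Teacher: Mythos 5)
Your plan replaces the paper's argument by two pointwise density estimates proved via local competitors, and as sketched both halves have genuine gaps. For the lower bound, the decisive step is your claim that redistributing a lump of mass $m$ from $B_r(x_0)$ to $B_R(x_0)$ gains $\lambda\bigl(m^2/r-m^2/R\bigr)$ in the boundary term. The $H^{-1/2}$ energy is quadratic and nonlocal: moving the lump also changes its interaction with the remaining signed measure $\mu_T-1$ outside $B_R(x_0)$, and these cross terms are of the same order as $m^2/r$ and have no sign, so minimality does not yield the displayed inequality. The paper avoids exactly this by taking as competitor the average of two globally sheared copies of $\mu$ on $(T-\eps,T)$: then $\what{\tilde\sigma}_k=\cos(2\pi\eta k\cdot e_1)\what{\sigma}_k$, the decrease of the norm is $\sum_k\sin^2(2\pi\eta k\cdot e_1)|k|^{-1}|\what\sigma_k|^2\ge0$ frequency by frequency, the interior cost is just $P(\mu,\eps)+\eta^2/\eps$ because the existing branches are merely tilted (no rerouting, automatic matching at $t=T-\eps$), and the resulting family of estimates in $\eta$ is converted into $\|\mu_T-1\|_{H^{-\frac12(2-\alpha)}}<\infty$ for $\alpha<f(\beta)$ via the semigroup characterization (Proposition \ref{prop:H-ssemigroup}) and then into $\underline{\dim}\,\mu_T\ge f(\beta)$ via the Frostman-type Lemma \ref{lemdimper}. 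In your version neither the sign of the gain nor the cost of the rerouted slab construction (your ``$\eps\,m^{1/2}N^{1/2}$'' branching estimate) is justified, and these are precisely the hard points.

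For the upper bound the direction of your argument is off. In the paper this bound uses no minimality and no boundary term at all: for any $\mu$ with $I(\mu,\eps)\les\eps^\beta$, if some nonzero $\tilde\sigma\le\mu_T$ satisfies $\tilde\sigma(B_r(x))\le r^\alpha$, one passes to the subsystem $\tilde\mu$ irrigating $\tilde\sigma$ (Proposition \ref{prop:subsystem}) and combines $I(\tilde\mu,\eps)\ge\eps\sum_i\phi_i^{1/2}+W^2_{\per}(\tilde\mu_\eps,\tilde\sigma)/\eps$ with the quantization-type bound $W^2_{\per}(\tilde\mu_\eps,\tilde\sigma)\ges\sum_i\phi_i^{(\alpha+2)/\alpha}$, Young and H\"older, to get $\eps^\beta\ges\eps^{(4-\alpha)/(4+\alpha)}$, i.e.\ $\alpha\le g(\beta)$; the mechanism is that diffuseness makes irrigation expensive, contradicting the hypothesis \eqref{hypIepsintro}. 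Your ``dual competitor'' (merge mass near $x_0$, save perimeter by subadditivity, pay in $H^{-1/2}$) cannot produce the lower density bound $\mu_T(B_r(x_0))\ges r^{g(\beta)}$: when $\mu_T(B_r(x_0))$ is small the interior savings from merging that mass are correspondingly small, so the minimality inequality gives no contradiction, and your sketch never derives the exponent $g(\beta)$ (the Wasserstein estimate $W_{\per}(\mu_T,\mu_{T-\eps})\les\eps^{(1+\beta)/2}$ alone does not encode non-concentration). The missing ingredient is precisely the perimeter--Wasserstein interpolation against a non-concentrated sub-measure, which is the heart of the paper's Theorem \ref{thm:upper}; also note that one must work with a sub-measure $\tilde\sigma\le\mu_T$ (Lemma \ref{charaup}) rather than with $\mu_T$ itself, since the non-concentration may only hold for part of the trace.
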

 Before commenting on its  proof, let us discuss the statement and its implications. First, \eqref{eq:lowupbounds} shows that local energy bounds in the form of \eqref{hypIepsintro} directly  translate into lower and upper bounds for the dimension of $\mu_T$. Regarding the lower bound, since  the function $f$ is increasing with $f(0)=1$, establishing \eqref{hypIepsintro} for any $\beta>0$ improves on the bound $1\le \underline{\dim}\, \mu_T $ given for free by $\E_{\lambda,T}(\mu)<\infty$ (see Lemma \ref{lemdimper}). As for the upper bound, since  the function $g$ is decreasing with $g(1/3)=2$, in order to improve on the trivial bound $\overline{\dim}\, \mu_T\le 2$, it is necessary to obtain \eqref{hypIepsintro} for some  $\beta>1/3$. More importantly, since  $f(3/7)=g(3/7)=8/5$, see Figure \ref{fig:fg}, we have the following corollary.

\begin{figure}[h]   \label{fig:fg}
\centering
\begin{tikzpicture}

\begin{scope}[scale=12]

% Tracé de axes
\pgfmathsetmacro{\xa}{0}
\pgfmathsetmacro{\xb}{0.5}

\pgfmathsetmacro{\ya}{0.15}
\pgfmathsetmacro{\yb}{0.5}
\draw[very thin,->] (\xa,\ya) -- (\xb,\ya) node[below]{$\beta$};
\draw[very thin,->] (0,\ya) -- (0,\yb);

% Rectangle plat
\draw[smooth]  plot[domain={0:3/7}]({\x}, {0.2*(1+3*\x)/(1+\x)}) node[below]{$f(\beta)$}; 

\draw[smooth]  plot[domain={4/13:3/7}]({\x}, {0.2*4*(1-\x)/(1+\x)}); 
\draw[dashed]  plot[domain={3/13:4/13}]({\x}, {0.2*4*(1-\x)/(1+\x)}) node[left]{$g(\beta)$}; 
%\draw[thin]  (3/13,0.2*18/7)  node[above]{$g(\beta)$};
\draw[dashed]  plot[domain={0:3/7}]({\x}, {0.2});
\draw[thin] (0,0.2) node[left]{$1$};
 
\draw[dashed]  plot[domain={0:3/7}]({\x}, {0.4});
\draw[thin] (0,0.4) node[left]{$2$};
\draw[dashed] plot[domain={\ya:\yb}]({1/3},{\x});
\draw[thin] (1/3,\ya) node[below]{$\frac{1}{3}$};

\draw[thin] (0,0.2*8/5) node{$-$} node[left]{$\frac{8}{5}$};
\draw[dashed]  plot[domain={0:3/7}]({\x}, {0.2*8/5});
\end{scope}
\end{tikzpicture}
\caption{ The functions $f$ and $g$.} 
\end{figure}
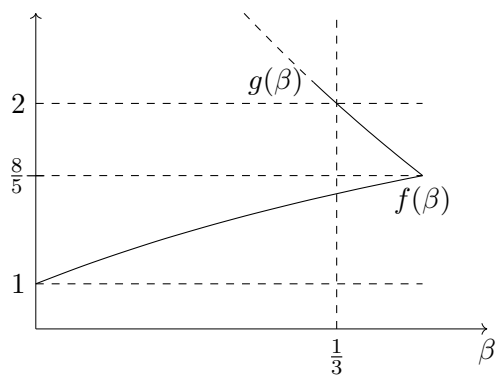

\begin{corollary}
If \eqref{hypIepsintro} holds with $\beta={3/7}$, then  $\dim\, \mu_T= 8/5$.
\end{corollary}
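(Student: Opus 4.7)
The plan is simply to apply Theorem \ref{thm:main1} at the distinguished value $\beta = 3/7$, and observe that this is precisely the crossing point of the two bounding curves $f$ and $g$. A direct computation gives
\[
 f(3/7) = \frac{1 + 3\cdot 3/7}{1 + 3/7} = \frac{16/7}{10/7} = \frac{8}{5}, \qquad g(3/7) = \frac{4(1 - 3/7)}{1 + 3/7} = \frac{16/7}{10/7} = \frac{8}{5},
\]
confirming the picture in Figure \ref{fig:fg}. Since the hypothesis \eqref{hypIepsintro} is assumed with $\beta = 3/7$, the conclusion of Theorem \ref{thm:main1} reads
\[
 \tfrac{8}{5} = f(3/7) \le \underline{\dim}\, \mu_T \le \overline{\dim}\, \mu_T \le g(3/7) = \tfrac{8}{5},
\]
which forces equality throughout. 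In particular the lower and upper Hausdorff dimensions of $\mu_T$ coincide, so the Hausdorff dimension $\dim\, \mu_T$ is well-defined and equals $8/5$.

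There is no real obstacle in this step — the corollary is a purely numerical consequence of Theorem \ref{thm:main1} at the unique exponent at which the two opposite bounds agree. All the substantive work is contained either in the proof of Theorem \ref{thm:main1} itself, or in eventually verifying the local energy bound \eqref{hypIepsintro} with the sharp exponent $\beta = 3/7$; the non-sharp bound of Theorem \ref{thm:main2} only suffices to narrow the range of admissible dimensions to $[3/2,2]$, and reaching the critical value $3/7$ is precisely the content of the Conti--Otto--Serfaty conjecture.
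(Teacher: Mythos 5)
Your proof is correct and is exactly the argument the paper intends: since $f(3/7)=g(3/7)=8/5$, Theorem \ref{thm:main1} with $\beta=3/7$ pinches $\underline{\dim}\,\mu_T$ and $\overline{\dim}\,\mu_T$ between the same value, forcing $\dim\,\mu_T=8/5$. The numerical evaluations of $f$ and $g$ are accurate, so nothing further is needed.
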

Let us point out  that the scaling $I(\mu,\eps)\les_{\lambda,T} \eps^{3/7}$ is exactly the local version of the global scaling law from Proposition \ref{prop:scaling}.\\
We now outline the proof of Theorem \ref{thm:main1}. The lower bound estimate is given in Corollary \ref{cor:lower}. This may be seen as a regularity result for $\mu_T$. Indeed, we prove in Theorem \ref{thm:lower} that $\mu_T\in H^{-\gamma}$  for an explicit range of parameters $\gamma<1/2$ (depending on $\beta$). We then rely on a variant of Frostman Lemma, see Lemma \ref{lemdimper}, to relate it to an estimate on $\underline{\dim} \, \mu_T$. To prove that $\mu_T\in H^{-\gamma}$ we construct a competitor  by replacing $\mu$  in $(T-\eps,T)$ by two slightly translated copies of $\mu$. Since $I(\mu,\eps)$ is small by hypothesis, this comes with a small cost. By minimality, we can then estimate by how much the $H^{-1/2}$ norm of $\mu_T$ is modified under this procedure. Notice that our  variations  modify $\mu$ up to the boundary so that we use minimality for the full energy $\E_{\lambda,T}$ and not only for the interior energy $I$. The crucial observation is that this variation can be seen as a sort of mollification of $\mu_T$. We can thus rely on a semi-goup characterization of negative Sobolev norms to conclude, see Proposition \ref{prop:H-ssemigroup} and Remark \ref{rem:semigroup}. The upper bound estimate is given in Theorem \ref{thm:upper}. As opposed to the lower bound, this estimate does not rely at all on the minimality of $\mu$. It quantifies instead the intuitive statement that the more diffused is a measure, the more expensive it is to irrigate it. The argument is a generalization of the interpolation type estimates from \cite[Proposition 5.3]{CGOS} (which are used to prove  the ansatz free lower bounds for the global scaling laws) in the case of a measure which is not the Lebesgue measure. Interestingly, this reduces the problem to a non-standard quantization problem, see Remark \ref{rem:quant}.\\
Let us point out that Theorem \ref{thm:main1} is  somewhat similar in spirit to \cite{pegfract} where the ``unit ball'' for the usual branched transport problem is considered. In \cite{pegfract}, a regularity result for the landscape function introduced in \cite{SanLan} is proven which is then translated into an upper bound on the dimension of the boundary of this unit ball. There is  however no complementary lower bound.  While the proofs and functionals are very different, these are, to the best of our knowledge, the only results about the fractal nature of an irrigated measure. Other results such as \cite{BraSol,G,MaMa} are more concerned with the fractal nature of the branching measure itself.\\

Our second main result is that \eqref{hypIepsintro} holds for $\beta=1/3$. In light of the discussion above, this allows to improve on the lower bound on the dimension but unfortunately not on the upper bound. We actually prove a slightly stronger statement.

\begin{theorem}\label{thm:main2}
	Let $\bar \mu$ be any probability measure on $Q$ and let $\mu$ be a symmetric minimizer of
	\[
	\min\lt\{ I(\mu) \ : \mu_{\pm T}=\bar \mu\rt\}.
	\]
	Then, for every $\eps\in (0,T)$,
	\[
	I(\mu,\eps)\les \max\lt(\eps,\eps^{\frac{1}{3}}\rt) + I(\mu) \frac{\eps}{T}.
	\]
	In particular, if $\mu$ is a minimizer of $\E_{\lambda,T}$,
	\[
	\underline{\dim}\, \mu_T\ge \frac{3}{2}.
	\]
\end{theorem}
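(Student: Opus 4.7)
The plan is a competitor argument which requires first constructing a \emph{universal boundary layer}: for any probability measure $\nu$ on $Q$ and any $\delta\in(0,T)$ one builds a configuration $\sigma^\nu$ on $[0,\delta]$ with $\sigma^\nu_0=\sigma^\nu_\delta=\nu$ and $I(\sigma^\nu)\lesssim\max(\delta,\delta^{1/3})$. The construction quantizes and holds: partition $Q$ into cells of side $r=\delta^{2/3}$, let $\nu^r$ be the projection of $\nu$ onto the cell centers, and linearly interpolate $\nu\to\nu^r\to\nu$ over $[0,\delta]$. Mass moves a distance $O(r)$ in time $O(\delta)$, giving kinetic cost $O(r^2/\delta)$, while the atomic length cost on the $\sim r^{-2}$ grid is $O(r^{-1}\delta)$; the two balance at $r=\delta^{2/3}$ to produce $\delta^{1/3}$. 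The alternative $\sigma^\nu\equiv\nu$ handles the regime where $\nu$ is concentrated or $\delta$ is large by contributing cost $\lesssim\delta$, and taking the better of the two accounts for the $\max$.

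The first competitor $\tilde\mu$ keeps $\mu$ unchanged on $(-T+\eps,T-\eps)$ and replaces the slab $(T-\eps,T)$ (symmetrically $(-T,-T+\eps)$) by a time-compressed fragment of $\mu$'s own boundary flow followed by a universal layer: on $(T-\eps,T-\theta\eps)$ place $\mu|_{(T-\eps,T)}$ compressed by factor $1-\theta$ (still mapping $\mu_{T-\eps}$ to $\bar\mu$), and on $(T-\theta\eps,T)$ place $\sigma^{\bar\mu}$. Writing the slab energy as $I(\mu,\eps)=A_\eps+B_\eps$ (length plus kinetic), the scaling of $I$ under time-rescaling gives
\[
I(\tilde\mu)=I(\mu)-2I(\mu,\eps)+2\bigl[(1-\theta)A_\eps+(1-\theta)^{-1}B_\eps+C\max(\theta\eps,(\theta\eps)^{1/3})\bigr],
\]
and minimality $I(\mu)\leq I(\tilde\mu)$ reduces to $\theta A_\eps\leq \tfrac{\theta}{1-\theta}B_\eps+C\max(\theta\eps,(\theta\eps)^{1/3})$; choosing $\theta$ optimally yields $A_\eps\lesssim B_\eps+\max(\eps,\eps^{1/3})$. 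A parallel second competitor is obtained by globally time-compressing $\mu$ into $(-T+\eps,T-\eps)$ and inserting $\sigma^{\bar\mu}$ on both slabs; the corresponding minimality inequality controls the kinetic part $B_\eps$ and produces the linear term $I(\mu)\eps/T$ through the scaling factor $(1-\eps/T)^{\pm 1}$. Combining the two estimates yields
\[
I(\mu,\eps)\lesssim \max(\eps,\eps^{1/3})+\frac{\eps}{T}I(\mu),
\]
as claimed. The main technical difficulty is this combination step: each competitor individually only controls ratios of length to kinetic parts, and delicate bookkeeping is needed to extract a genuine bound on $I(\mu,\eps)$.

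For the dimensional consequence, a minimizer $\mu$ of $\E_{\lambda,T}$ has $I(\mu)<\infty$, so $I(\mu)\eps/T=O(\eps)=o(\eps^{1/3})$ as $\eps\to 0^+$, and the local bound gives \eqref{hypIepsintro} with $\beta=1/3$. Theorem \ref{thm:main1} then yields $\underline{\dim}\,\mu_T\geq f(1/3)=3/2$.
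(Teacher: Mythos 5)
Your overall scheme (competitor in the boundary slab plus a time-rescaling/equipartition-type variation, then feed $\beta=1/3$ into Theorem \ref{thm:main1}) points in the right direction, and the final dimensional step is correct, but the two competitors you propose do not close, and the missing piece is precisely the technical heart of the paper's proof. Your first competitor (compress $\mu\restr(T-\eps,T)$ by $1-\theta$ and append the universal layer $\sigma^{\bar\mu}$) yields, as you compute, $A_\eps\le\frac{1}{1-\theta}B_\eps+\frac{C}{\theta}\max(\theta\eps,(\theta\eps)^{1/3})$; the coefficient in front of $B_\eps$ is necessarily $\ge 1$, because the slab kinetic energy is reused (and even increased) by the time compression. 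Your second competitor (global compression into $(-T+\eps,T-\eps)$ plus two layers) only compares the \emph{global} perimeter and kinetic energies: it gives $\frac{\eps}{T}P\le\frac{\eps/T}{1-\eps/T}E_{\rm cin}+C\max(\eps,\eps^{1/3})$, i.e.\ after division an error of order $\frac{T}{\eps}\max(\eps,\eps^{1/3})$, and says nothing about the slab quantity $B_\eps$. Neither inequality, nor any combination of them, bounds $B_\eps$: a minimizer with $A_\eps\sim B_\eps\sim 1$ for all small $\eps$ would satisfy both, so no ``bookkeeping'' can extract $I(\mu,\eps)\les\max(\eps,\eps^{1/3})+I(\mu)\eps/T$ from them. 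What is missing is an inequality in which the slab kinetic energy of the competitor is within a factor $1+\eta$ of the \emph{optimal} value $W_{\per}^2(\mu_{T-\eps},\bar\mu)/\eps$, which by Benamou--Brenier \eqref{Ecinlowerintern} is $\le B_\eps$; then minimality gives $A_\eps+B_\eps\le(1+\eta)B_\eps+C_\eta\max(\eps,\eps^{1/3})$, hence $A_\eps\le\eta B_\eps+C_\eta\max(\eps,\eps^{1/3})$, and combined with the equipartition identity $\dot P=\dotEcin+\Lambda$, $|\Lambda|\les I(\mu)/T$ (Proposition \ref{prop:equipar}) this closes. Building such a connection between the two \emph{arbitrary} traces $\mu_{T-\eps}$ and $\bar\mu$, with kinetic energy $(1+\eta)$-close to optimal \emph{and} perimeter $\les\eps^{1/3}$, is exactly Proposition \ref{prop:estimcons}: it requires discretizing McCann's interpolant on dyadically refining grids at dyadic times accumulating at $T$, and interpolating with \emph{sparse} optimal transport plans to keep the number of branches (hence the perimeter) under control. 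Your universal layer $\sigma^{\bar\mu}$, which merely sends $\bar\mu$ to a quantization of itself and back, is not a substitute for this.

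Two secondary points. First, the universal layer as described (``linearly interpolate $\nu\to\nu^r\to\nu$'') is not admissible: elements of $\calA_T^*$ must have atomic slices $\mu_t$, so a diffuse $\nu$ cannot appear at interior times; the correct version is the cell-by-cell branching construction of Lemma \ref{lem:consblock}, which indeed gives cost $\les\max(\delta,\delta^{1/3})$, so this is repairable. Second, in your second competitor the term $I(\mu)\eps/T$ does not actually come out: in the paper it arises from the constant $\Lambda$ in the equipartition of energy, obtained by internal time reparametrizations, not from a global compression with boundary layers (whose layer cost, after dividing by $\eps/T$, is far too large).
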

Since  by Theorem \ref{thm:upper},  $I(\mu,\eps)\ges \eps^{1/3}$ when $\mu_T$ is the Lebesgue measure, this shows that the worst possible scaling is attained when the irrigated measure is uniform. The general strategy of the proof is similar to 
\cite[Theorem 2.6]{KM94}. It consists, given the traces $\mu_{T-\eps}$ and $\mu_T$ in the construction of a competitor in $(T-\eps,T)$ which has kinetic energy almost equal to the relaxed energy (here $W_{\per}^2(\mu_{T-\eps},\mu_T)/\eps$), where $W_{\per}^2$ is the squared $2-$Wasserstein distance on the torus) and a well controlled perimeter. As in \cite{KM94} a natural idea is to first discretize the solution of the relaxed problem (here McCann's interpolant between $\mu_{T-\eps}$ and $\mu_T$) at some well-chosen times $t_k$ with $t_k\to T$ as $k\to \infty$ and then make a construction in $(t_k,t_{k+1})$.  A major difference between \cite{KM94} and our context is that in the case of a transport type energy we are a priori  very constrained regarding the geometrical structure of competitors. In particular, the graph of a typical transport plan between two measures supported on $N$ points has $N^2$ edges which is much too large. The key observation is that among all optimal transport plans (which in particular have the smallest possible kinetic energy among all measure interpolating the discretized measure) there is at least one which is \textit{sparse} in the sense that it contains of the order of $N$ edges, see \cite[Proposition 3.4]{PeyCut}.
\begin{remark}
 As in \cite{KM94} we use the equipartition of the energy , see Proposition \ref{prop:equipar} in the proof. We believe that as in \cite{Conti00} it should be possible to avoid this argument and get a sharper bound i.e. replacing $C/\eta$ by $C$ in \eqref{eq:toprovelocalscaling}. This would require much more work but would be a necessary first step in order to obtain energy bounds which are localized both in time and space as in \cite{Conti00,AlChokOt,Viehmanndiss,CDKZ}. 
\end{remark}

\subsection{A toy model}
The proof of Theorem \ref{thm:main1} can be extended to any space dimension and to any boundary term of the form $H^{-s}$ with $s>0$. The expected dimension of $\mu_T$ however depends on the specific choice of $s$.
In order to highlight
%the fact that the structure of $\mu_T$ strongly depends on the choice of the penalization term,
 this phenomenon, we consider in Section \ref{sec:wasserstein} a toy model where we replace the $H^{-1/2}$ norm by a Wasserstein distance to the Lebesgue measure. This acts morally speaking like a $H^{-1}$ penalization. Moreover, we work in a simplified two-dimensional setting reminiscent of \cite{G}. For $\mu=dt \otimes \mu_t$ with $\mu_t=\sum_{i=1}^{N} \phi_i \delta_{X_i}$ where as before $\sum_{i=1}^N \phi_i=1$ but $X_i\in Q=\R/\Z$, we set
\[
 \Eund_{\lambda,T}(\mu)=\int_{-T}^T \sum_{i=1}^N \lt[1+ \phi_i |\dot{X}_i|^2\rt] dt + \lambda W_{\per}^2(\mu_{\pm T}, 1).
\]
As in \cite{G}, we replaced in the energy the term $\phi_i^{1/2}$ by $\phi_i^0$. This is motivated by the fact that this term should be seen as a perimeter, see Remark \ref{rem:alpha}. Our main result is that for any value of $\lambda$ and $T$, the minimizers of $\Eund_{\lambda,T}$ have a finite number of branching points.

\begin{theorem}\label{thm:was1}
	Let  $\lambda, T>0$ and $\mu$ be a symmetric minimizer of $\Eund_{\lambda,T}$. Then $\mu_T$ is atomic with finitely many non-zero atoms.
\end{theorem}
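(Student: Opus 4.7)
The plan is to prove that for any symmetric minimizer $\mu$ of $\Eund_{\lambda,T}$ every non-zero atom of $\mu_T = \sum_i \phi_i \delta_{X_i}$ carries mass at least $c = c(\lambda,T) > 0$. Since $\mu_T$ is a probability measure, this will bound the number of non-zero atoms by $\lfloor 1/c \rfloor$. The strategy is a Weierstrass-type competitor argument: a sufficiently light atom can always be removed at a net energy gain.

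First, I would exploit the branched tree structure of $\mu$ on $[0,T]$. Every leaf $(T, X_i, \phi_i)$ is reached by a straight terminal segment from its last branching event $(t_b^{(i)}, X_b^{(i)})$, where a tube of combined mass $\phi_i + \tilde\phi_i$ splits into the leaf branch (mass $\phi_i$) and a sibling branch (total mass $\tilde\phi_i$). The terminal segment contributes
\[
(T - t_b^{(i)}) + \phi_i \frac{|X_i - X_b^{(i)}|^2}{T - t_b^{(i)}} \;\ge\; 2\sqrt{\phi_i}\,|X_i - X_b^{(i)}|
\]
to $I(\mu)$ by AM--GM, with equality in the equipartition regime $T - t_b^{(i)} = \sqrt{\phi_i}\,|X_i - X_b^{(i)}|$ dictated by internal optimality in the branching time. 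I would then pick the lightest leaf $X_0$ of mass $\phi_0$; since $\phi_0$ is the smallest leaf mass, the sibling subtree satisfies $\tilde\phi_0 \ge \phi_0$, because every leaf in it weighs at least $\phi_0$. The trial measure $\tilde\mu$ is obtained by suppressing the branching at $(t_b^{(0)}, X_b^{(0)})$: on $[t_b^{(0)}, T]$ the full mass $\phi_0 + \tilde\phi_0$ follows the sibling subtree, with the excess $\phi_0$ redistributed proportionally at every subsequent internal branching so that it accumulates on the sibling leaves.

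\textbf{Energy balance.} A direct computation---treating first the single-sibling-leaf case after optimising in $X_b^{(0)}$, and then observing that for a general subtree the kinetic multiplier stays below $(\phi_0 + \tilde\phi_0)/\tilde\phi_0 \le 2$ leaf-by-leaf---shows that the net change in $I(\mu)$ is at most $-c_1 \sqrt{\phi_0}\,|X_0 - X_b^{(0)}|$ for a universal $c_1 > 0$. On the boundary, the triangle inequality for $W_{\per}$ gives
\[
\bigl|W_{\per}^2(\tilde\mu_T,1) - W_{\per}^2(\mu_T,1)\bigr| \;\le\; \phi_0 |X_0 - X_1|^2 \;\lesssim\; \phi_0 |X_0 - X_b^{(0)}|^2,
\]
where $X_1$ is the sibling leaf onto which the redistributed mass eventually concentrates. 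Combining the two bounds,
\[
\Eund_{\lambda,T}(\tilde\mu) - \Eund_{\lambda,T}(\mu) \;\le\; -c_1 \sqrt{\phi_0}\,|X_0 - X_b^{(0)}| + C\lambda\phi_0\,|X_0 - X_b^{(0)}|^2,
\]
which is strictly negative as soon as $\sqrt{\phi_0}\,|X_0 - X_b^{(0)}| < c_1/(C\lambda)$. Since $|X_0 - X_b^{(0)}|$ is bounded by the diameter of $Q$, this is automatic once $\phi_0$ is small enough in terms of $\lambda$ and $T$, contradicting minimality.

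\textbf{Main obstacle.} The hard part will be the reduction from a general sibling subtree to the single-leaf model: one must verify that the proportional redistribution of the excess mass $\phi_0$ across every internal branching of the sibling subtree yields an extra kinetic cost controlled uniformly by the ratio $\phi_0/\tilde\phi_0 \le 1$, rather than blowing up with the depth or complexity of the subtree. The degenerate regime $|X_0 - X_b^{(0)}| \to 0$, in which $X_0$ essentially coincides with the branching point and the sibling leaves, needs a short separate treatment---one simply merges the coincident atoms---but the energy balance above still has the right sign there because both terms vanish at the same quadratic rate in that small distance.
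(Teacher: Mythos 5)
Your overall strategy -- remove a light leaf by a competitor and conclude a mass lower bound per atom -- has a structural gap: it presupposes the qualitative picture that the theorem must establish. You assume that every atom of $\mu_T$ is a leaf reached by a straight terminal segment issued from a \emph{last} branching event at some time $t_b^{(0)}<T$, and that there is a lightest leaf. But Proposition \ref{prop:structure} only gives a finite graph in $Q_{T'}$ for $T'<T$: branching points may a priori accumulate at $t=T$, and $\mu_T$ may a priori have a diffuse part (this is exactly the expected behaviour for the $H^{-1/2}$ penalization, where $\mu_T$ should have dimension $8/5$). Even if your leaf-removal argument were complete, it would only show that the atomic part of $\mu_T$ contains finitely many atoms of mass $\ge c(\lambda,T)$; it neither excludes a diffuse part nor provides the last branching time your competitor needs. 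The paper's proof is organized precisely around this point: through the Lagrangian reformulation (Theorem \ref{barycenter}), which turns the boundary term into kinetic energy on the extra layer $[T,T_\lambda]$, and the cone property (Proposition \ref{propcone}), it proves in Proposition \ref{prop:refineddirac} a lower bound \eqref{estimbranch} on the mass of \emph{any} subtree containing a branching point, uniformly as the branching time tends to $T$; this is what forbids accumulation of branchings at the boundary and hence any non-atomic part, and then finiteness of the tree follows.

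Even granting the leaf structure, the two quantitative steps are not mere bookkeeping. First, rerouting the mass $\phi_0$ through the sibling subtree costs an extra $(\phi_0/\tilde\phi_0)\,\Ecin^{\rm sib}$ on $(t_b^{(0)},T)$, and nothing in your argument compares $\Ecin^{\rm sib}$ with the energy you removed; the remark that the kinetic multiplier is $\le 2$ only says the sibling cost at most doubles, not that the increment is dominated by the gain $\sim (T-t_b^{(0)})+\sqrt{\phi_0}\,|X_0-X_b^{(0)}|$. In the paper this is exactly where the cone property enters: it confines the subtree to an explicit cone, which combined with Lemma \ref{lem:consblock} and minimality gives the needed bound \eqref{eq:firstestimrhskey} on the sibling's kinetic energy. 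Second, the boundary estimate $|W_{\per}^2(\tilde\mu_T,1)-W_{\per}^2(\mu_T,1)|\le \phi_0|X_0-X_1|^2$ is false as stated: transporting mass $\phi_0$ over a distance $|X_0-X_1|$ perturbs the \emph{squared} distance by a cross term of order $\sqrt{\phi_0}\,|X_0-X_1|\,W_{\per}(\mu_T,1)$ (or, after surgery on the optimal plan, by a term of order $\phi_0|X_0-X_1|$ times the distance to the transport targets), which is of the same order as your claimed interior gain. Beating it requires knowing that the relevant transport occurs over distances $O(|X_0-X_b^{(0)}|)$ -- again a cone-type statement -- together with a Young-inequality splitting as in the estimates following \eqref{keyestimbranch}. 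So the missing ideas are precisely the two pillars of the paper's argument, the Lagrangian reformulation and the cone property; without them both the structural assumption and the sign of the energy balance are unsupported.
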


\begin{remark}
 This is in strong contrast with the case of a penalization by a $H^{-s}$ norm in dimension two. In that case a direct adaptation of Theorem  \ref{thm:main1} and Theorem \ref{thm:main2} gives that $\mu_T$ does not have atoms for $s<3/4$. A refinement of this argument shows that there are actually no atoms  for $s<1$ (alternatively this would follow from Theorem  \ref{thm:main1} provided we can show \eqref{hypIepsintro} for the optimal decay rate $\beta=2s/(2s+1)$ when $s>1/2$, giving $\dim \mu_T=1-2s$).
\end{remark}
As already observed in \cite{G}, in this two-dimensional setting, which is actually closer in spirit to a time-dependent one-dimensional setting, the no-loop property of $\mu$ implies that  the  transport map from $\mu_0$ to $\mu_T$ induced by $\mu$ is monotone (see Lemma \ref{lem:monotone}). In particular it is (up to a translation) the optimal transport map.
In order to prove Theorem \ref{thm:was1} we first show in Theorem \ref{barycenter}, that as in usual branched transport (see e.g. \cite{pegLag}), there is an equivalent  Lagrangian formulation of the problem. As opposed to standard representations in terms of probabilities on the space of curves, we parametrize the curves by their initial point. This may be seen as the analog of optimal transport plans versus optimal transport maps in optimal transport problems. The main advantage of working with a Wasserstein penalization is that in the Lagrangian viewpoint the internal kinetic energy and the boundary energy can be combined together. The problem then becomes an irrigation problem to (and from) the Lebesgue measure but where the perimeter is penalized only for part of the time.  As a consequence of this and of the no-loop property we can prove in Proposition \ref{propcone}, the so-called \textit{cone property}, which is pivotal in our analysis. This property states that each subtree starting from a Dirac mass at a point $X$ and irrigating the Lebesgue measure on an interval $A$ is contained in the cone in time-space spannned by $X$ and $A$. Using the cone property we prove in Proposition \ref{prop:refineddirac} a lower bound on the possible mass of a subtree containing a branching point (which then implies Theorem \ref{thm:was1}). The idea is that if there is a branching point of mass $\phi$ at a distance $\eps$ from the boundary, we can replace the whole subtree by a segment. This way we gain at least $\eps$ in perimeter and the cone property guarantees that we loose at most something of the order of $\eps \phi^3$ in the kinetic energy. This would contradict minimality if $\phi$ is too small. The actual proof is  more involved since we need to obtain a lower bound on the mass of each subtree to be sure that $\mu_T$ has finitely many non-zero atoms.\\

Thanks to the quantitative lower bound from Proposition \ref{prop:refineddirac} on the flux that can carry a branching tree, we  prove that for some values of $\lambda$ and $T$ minimizers of $\Eund_{\lambda,T}$ do not have branching points at all. In these cases we can completely solve the problem.

\begin{theorem}\label{thm:was2}
	If $\lambda \min( 1, T)\ll 1$, then minimizers of $\Eund_{\lambda,T}$ are made of $N$ equidistant straight segments each carrying a mass $1/N$. Moreover, 
	\[
	 N\sim \max\lt(1,\frac{\lambda}{T}\rt)^{\frac{1}{3}}.
	\]

\end{theorem}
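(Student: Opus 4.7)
My plan is to prove this in three stages: first, rule out all branching using the quantitative bound of Proposition~\ref{prop:refineddirac}; second, reduce minimizers to $N$ vertical (i.e.\ time-constant) straight segments; third, compute the optimal $N$ by solving a one-dimensional discrete problem.

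For the first stage I invoke Proposition~\ref{prop:refineddirac}, which gives a lower bound on the flux $\phi$ that any branching subtree must carry, of the form $\phi \ges \phi_*(\lambda, T)$ with $\phi_*(\lambda, T) \to \infty$ as $\lambda \min(1, T) \to 0$. Since the total flux is normalized to $1$, choosing the implicit constant in $\lambda \min(1, T) \ll 1$ small enough forces $\phi_*(\lambda, T) > 1$, which is incompatible with $\phi \le \sum_i \phi_i = 1$. Hence the minimizer has no branching points and its support consists of $N$ disjoint curves $X_i \colon [-T, T] \to Q$ carrying constant masses $\phi_i$ with $\sum_i \phi_i = 1$.

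For the second stage, each $X_i$ minimizes the kinetic contribution $\int_{-T}^T \phi_i |\dot X_i|^2 \, dt$ with free endpoints, so it is a geodesic on $Q = \R/\Z$, that is, a constant-speed straight line. The symmetry $\mu_{-t} = \mu_t$ forces the set $\{X_i(t)\}_i$ to be invariant under $t \mapsto -t$, pairing up the curves by reflection. Two reflected straight lines either coincide (i.e.\ $\dot X_i \equiv 0$) or cross at $t = 0$; the latter is excluded by the monotonicity statement in Lemma~\ref{lem:monotone}. Hence every $X_i$ is constant in $t$.

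For the third stage, the reduced energy becomes
\[
\Eund_{\lambda,T}(\mu) = 2 N T + 2 \lambda \, W_{\per}^2\Big(\sum_{i=1}^N \phi_i \delta_{X_i}, 1\Big),
\]
and for fixed $N$ the Wasserstein term is a standard quantization problem on the circle, minimized precisely by equidistant atoms of equal mass $1/N$, with optimal value $\frac{1}{12 N^2}$. One is thus left with $\min_{N \in \N} \{ 2 N T + \lambda/(6 N^2) \}$, whose optimizer scales like $\max(1, \lambda/T)^{1/3}$. The hard part is the first stage: Proposition~\ref{prop:refineddirac} must be strong enough that $\phi_*(\lambda, T) > 1$ under the precise smallness hypothesis $\lambda \min(1, T) \ll 1$, and the notion of ``branching subtree'' there should be wide enough to exclude every non-trivial branching in the minimizer, not just certain rooted ones.
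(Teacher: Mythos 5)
Your Stage 1 contains a genuine gap, and it sits exactly at the point you yourself flag as ``the hard part''. Proposition \ref{prop:refineddirac} does \emph{not} give a mass threshold $\phi_*(\lambda,T)$ tending to infinity as $\lambda\min(1,T)\to0$. The bound \eqref{estimbranch} reads $\lambda^2\Phi^2\phi_i\ges 1+\lambda T$, so branching through a node of total mass $\Phi$ is only excluded when $\Phi^2\phi_i\les (1+\lambda T)\lambda^{-2}$, i.e.\ the threshold is $\phi_*\sim\bigl((1+\lambda T)/\lambda^{2}\bigr)^{1/3}$. The hypothesis $\lambda\min(1,T)\ll1$ covers two regimes. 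If $\lambda\ll1$, then indeed $\phi_*\gg1$ and your contradiction with $\sum_i\phi_i=1$ works (this is exactly how the paper treats that case). But if $\lambda\ge1$ and $\lambda T\ll1$ --- which is allowed by the hypothesis, with $\lambda$ arbitrarily large --- then $\phi_*\sim\lambda^{-2/3}\to0$: branch masses of order $\lambda^{-2/3}$ are perfectly compatible both with \eqref{estimbranch} and with the normalization of the total flux, so no contradiction arises and Proposition \ref{prop:refineddirac} alone does not exclude branching.

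The paper closes this second regime with an additional comparison argument that your proposal is missing: writing $\mu_0=\sum_i\phi_i\delta_{X_i}$ and using Proposition \ref{prop:reduce} to split the energy as $2\sum_i E(\phi_i,T,\lambda)$, one isolates the set $\calI$ of indices with $\phi_i\ges\lambda^{-2/3}$ (the only branches in which \eqref{estimbranch} could permit branching). By the scaling bound \eqref{estimE}, each such branch costs at least of order $\lambda\phi_i^3\ges\lambda^{-1/3}\phi_i$, whereas re-irrigating the total mass $\Phi=\sum_{\calI}\phi_i$ by $N\sim\Phi(\lambda/T)^{1/3}\gg1$ equal unbranched pieces costs only of order $\lambda^{1/3}T^{2/3}\Phi$, which is strictly smaller precisely because $\lambda T\ll1$; minimality then forces $\calI=\emptyset$, and \eqref{estimbranch} rules out branching in the remaining small-mass branches. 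Without an argument of this type your proof only covers the subcase $\lambda\ll1$. Stages 2 and 3 are essentially correct and match the paper: once branching is excluded, Proposition \ref{prop:reduce} yields vertical segments (your geodesic-plus-reflection argument is an acceptable substitute, provided you observe that two distinct reflected, non-constant lines would meet at $t=0$ and thus create a branch point), and the reduced minimization $\min_N\{2NT+\lambda/(6N^2)\}$ gives $N\sim\max(1,\lambda/T)^{1/3}$ as in the paper.
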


	\section{Notation and preliminary results}\label{sec:preliminars}
	In this section we define  the energy functional  and set some notations. With the symbols $\les$, $\ges$ we indicate estimates which hold up to a universal constant. We say that $A\sim B$ if $A\les B\les A$. We use $A\ll B$ as an assumption. It means that there exists $\eps>0$ such that provided $A\le \eps B$, the conclusion holds. We denote by $\nper{\cdot}$ the distance on $Q$ and by $|\cdot|$ the Euclidean distance so that $\nper{x}=\min_{\Z^d}|x-z|$. We write  $z\in \R^{d+1}$ as $z=(x,t)\in \R^d\times \R$. The symbol $\nabla$ always refers to derivatives in the 'horizontal' variables $x$ while $\partial_t$ denotes the derivative in the 'vertical' one.  We set $Q=(\R/\Z)^d$ to be the $d-$dimensional torus and then $Q_{T}=(-T,T)\times Q$. We let $\M^+(Q_{T})$ be the set of positive measures on $Q_T$ and $\M(Q_T;\R^d)$ the set of finite measures on $Q_T$ with values in $\R^d$.
	 \subsection{Optimal transport}
	 In this section, we set some notation regarding optimal transport. For much more on this topic we refer e.g. to \cite{Sant,PeyCut}.\\
	 If $\lambda_0$ and $\lambda_1$ are positive measures on $\R^d$ with $\lambda_1(\R^d)=\lambda_2(\R^d)$ we set as usual 
	 \[
	  W_2^2(\lambda_1,\lambda_2)=\min_{\pi}\lt\{ \int_{\R^d\times\R^d} |x-y|^2 d\pi \ : \pi_i=\lambda_i, \ i=1,2\rt\}.
	 \]
Similarly, if $\lambda_1,\lambda_2\in \M^+(Q)$ with $\lambda_1(Q)=\lambda_2(Q)$, we set 
\[
 W_{\per}^2(\lambda_1,\lambda_2)=\min_{\pi}\lt\{ \int_{\R^d\times\R^d} \nper{x-y}^2 d\pi \ : \pi_i=\lambda_i, \ i=1,2\rt\}.
\]
We recall the Benamou-Brenier, or Eulerian, formulation of optimal transport. For $\X\in \{Q,\R^d\}$ and every $a<b$ we have with $*=\per$ if $\X=Q$ and $*=2$ if $\X=\R^d$,
\begin{multline}\label{BB}
 \frac{1}{b-a}W_*^2(\lambda_0,\lambda_1)\\
 =\min_{(\mu,m)}\lt\{ \int_{(a,b)\times \X} \lt|\frac{dm}{d\mu}\rt|^2 d\mu : m\ll \mu, \, \partial_t \mu + \Div m=0 \textrm{ and } \mu_a=\lambda_1, \, \mu_b=\lambda_2\rt\}.
\end{multline}
Finally, we recall the following characterization of optimal transport maps on $Q$ when $d=1$.
\begin{proposition}
 Let $d=1$ and $\lambda_1,\lambda_2\in \M^+(Q)$ be such that $\lambda_1(Q)=\lambda_2(Q)$. If $\lambda_1$ does not contain atoms, the optimal transport plan for $W_{\per}^2(\lambda_1,\lambda_2)$ is induced by a map $\Psi$. It is the only transport map from $\lambda_1$ to $\lambda_2$ which can be extended to a monotone map on $\R$ with $\Psi-x$ periodic and 
 \[
  \int_Q (\Psi-x) d\lambda_1=0.
 \]

\end{proposition}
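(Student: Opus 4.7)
The plan has three parts: construct a one-parameter family of monotone quasi-periodic transport maps from $\lambda_1$ to $\lambda_2$ via a CDF construction, single out the unique member with vanishing mean, and verify it realises $W_{\per}^2$ by a lifting argument to the universal cover.

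First I would view $Q=\R/\Z$, lift $\lambda_1,\lambda_2$ to $1$-periodic measures on $\R$, and define CDFs $F_i$ normalised so that $F_i(x+1)=F_i(x)+1$. Atomlessness of $\lambda_1$ makes $F_1$ continuous and strictly increasing on $\mathrm{supp}\,\lambda_1$, while $F_2^{-1}$ denotes the generalised right-continuous inverse of $F_2$. For each $c\in\R$ the map
\[
\Psi_c(x):=F_2^{-1}(F_1(x)+c)
\]
is nondecreasing on $\R$, satisfies $\Psi_c(x+1)=\Psi_c(x)+1$, pushes $\lambda_1$ to $\lambda_2$ and descends to a map $Q\to Q$; conversely, every monotone quasi-periodic transport map from $\lambda_1$ to $\lambda_2$ coincides $\lambda_1$-a.e.\ with some $\Psi_c$. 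The substitution $u=F_1(x)$ gives the closed form
\[
M(c):=\int_Q(\Psi_c-x)\,d\lambda_1=\int_0^1 F_2^{-1}(u+c)\,du-\int_0^1 F_1^{-1}(u)\,du,
\]
and the identity $F_2^{-1}(t+1)=F_2^{-1}(t)+1$ forces $M(c+1)=M(c)+1$. Since $M$ is continuous, there is a unique $c^*\in\R/\Z$ with $M(c^*)=0$; set $\Psi:=\Psi_{c^*}$, which provides the existence and uniqueness parts of the statement.

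For optimality, take any coupling $\pi$ of $(\lambda_1,\lambda_2)$ on $Q\times Q$ and lift it to $\tilde\pi$ on $\R\times\R$ by selecting, for $\pi$-a.e.\ pair $(x,y)\in[0,1)^2$, the integer $k(x,y)\in\Z$ with $|x-y-k|=\nper{x-y}$; then $\int_{\R^2}|x-y|^2\,d\tilde\pi=\int_{Q^2}\nper{x-y}^2\,d\pi$ and the first marginal of $\tilde\pi$ is $\lambda_1|_{[0,1)}$. Applying $c_\R$-cyclical monotonicity (Rockafellar) to the $\R$-optimal rearrangement between the marginals of $\tilde\pi$, the resulting optimal plan is contained in the graph of a nondecreasing quasi-periodic map from $\lambda_1$ to $\lambda_2$, which must then be of the form $\Psi_c$ for some $c$.

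The crux, and the step I expect to be the main obstacle, is to show that the $c$ produced above is exactly $c^*$, equivalently that the optimal $Q$-plan is induced by the zero-mean map $\Psi$. The key point is that the zero-mean normalisation $M(c^*)=0$ is equivalent to $|\Psi(x)-x|\le 1/2$ for $\lambda_1$-a.e.\ $x$: granted this, the lift of the graph of $\Psi$ to $\R^2$ via $y=\Psi(x)$ already realises $|y-x|=\nper{y-x}$ pointwise, so the $\R$-cost of $\Psi$ coincides with its $Q$-cost; conversely, if the optimal $c$ satisfied $M(c)\ne 0$, then a positive $\lambda_1$-measure of points would have $|\Psi_c(x)-x|>1/2$ and one could switch their lifts by $\pm 1$, producing a new $\R$-coupling of strictly smaller $\R$-cost whose comonotonic rearrangement would be some $\Psi_{c'}$ with $c'$ closer to $c^*$, contradicting optimality. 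Chaining these bounds gives $\int_Q\nper{\Psi-x}^2\,d\lambda_1\le\int_{Q^2}\nper{x-y}^2\,d\pi$ for every coupling $\pi$, completing the proof.
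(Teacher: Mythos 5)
The paper recalls this proposition without proof, so your argument has to stand on its own, and the step you yourself call the crux does not hold. The claimed equivalence between the normalisation $\int_Q(\Psi_c-x)\,d\lambda_1=0$ and the bound $|\Psi_c(x)-x|\le \tfrac12$ for $\lambda_1$-a.e.\ $x$ fails in both directions. For the direction you actually use ($|d|\le\tfrac12$ forces zero mean), take $\lambda_1=\lambda_2=$ Lebesgue: every translation $\Psi_c(x)=x+c$ with $0<c\le\tfrac12$ satisfies $|\Psi_c-x|\le\tfrac12$ but has mean $c\neq0$. For the converse, take $\lambda_1$ the uniform probability measure on $[0,1/2]$ and $\lambda_2=\tfrac12\delta_{0.1}+\tfrac12\delta_{0.45}$: the zero-mean member of your family (it corresponds to $c^*=-0.025$) sends the interval $[0,0.0125]$ to the lift $-0.55$ of the atom $0.45$, so its displacement exceeds $\tfrac12$ there. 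Worse, this same example shows that the zero-$\lambda_1$-mean normalisation does not select the optimiser at all: the optimal plan is obtained by thresholding $\nper{x-0.1}^2-\nper{x-0.45}^2$, i.e.\ $[0,1/4]\mapsto0.1$ and $[1/4,1/2]\mapsto0.45$, and its mean displacement is $2\left[\int_0^{1/4}(0.1-x)\,dx+\int_{1/4}^{1/2}(0.45-x)\,dx\right]=\tfrac1{40}\neq0$. What a first variation in the shift parameter (equivalently, the periodic Kantorovich potential: $\Psi(x)=x-\tfrac12\phi'(x)$ with $\phi$ Lipschitz and periodic, which also yields $|\Psi-x|\le\tfrac12$) actually gives is the vanishing of the \emph{Lebesgue} mean of the displacement of the monotone extension; this coincides with the stated $\lambda_1$-mean condition precisely when $\lambda_1$ is the Lebesgue measure, which is the only case the paper uses (the map $\Psi'$ for $W_{\per}(1,\mu_T)$). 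So your scheme can only be closed in that case, and there the correct mechanism is the stationarity/duality argument, not the claimed equivalence with $|d|\le\tfrac12$; the ``switch the lifts by $\pm1$ and re-monotonise'' repair you sketch does not produce an admissible competitor in the family with a controlled cost.

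Two further points. First, the reduction of an arbitrary coupling to the monotone family via your lift is gappy: after lifting $\pi$ by choosing $k(x,y)$ with $|x-y-k|=\nper{x-y}$, the second marginal of $\tilde\pi$ is not $\lambda_2$ restricted to a period but a rearrangement of it spread over $(-1/2,3/2)$, so the monotone rearrangement of the lifted marginals need not be quasi-periodic, need not push $\lambda_1$ onto $\lambda_2$ on $Q$, and its cost compares to nothing a priori. The standard route is to use $c$-cyclical monotonicity of an optimal plan directly for the cost $\nper{x-y}^2$ on $Q$, together with a two-point exchange (non-crossing) argument exploiting strict convexity, to show that the support is cyclically order preserving and hence contained in the graph of some $\Psi_c$; only afterwards does one identify $c$. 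Second, your uniqueness of $c^*$ needs strict monotonicity of $M$, not mere continuity; this is in fact immediate since $\frac{d}{dc}\int_c^{c+1}F_2^{-1}=F_2^{-1}(c+1)-F_2^{-1}(c)$ equals the total mass, so $M(c)=M(0)+c$ after normalisation, but as written the step is unjustified.
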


	\subsection{The (internal) energy functional}\label{sec:internal}
	We mostly follow the notation from \cite{CGOS}. In order to give a unified presentation for the three dimensional functional considered in Section \ref{sec:Main} and the two dimensional toy model from Section \ref{sec:wasserstein}, we consider the problem in arbitrary dimension. 
	\begin{definition}
We denote by $\calA_{T}$ the set of  pairs of measures 
$\m\in\M^+(Q_{T})$, $m\in \M(Q_{T};\R^d)$ with $m\ll\mu$, satisfying the continuity equation (in the distributional sense)
\begin{equation}\label{conteq0}
 \pa_t \m +\Div m=0 \qquad \textrm{in } Q_{T},
\end{equation}
and such that $\m= d{t}\otimes \m_{t}$ where, for a.e. $t\in(-T,T)$,
$\m_{t}=\sum_i \phi_i \delta_{X_i}$ for some $\phi_i> 0$ and $X_i\in Q$. 
We denote by $\calA_{T}^*=\{\mu: \exists m, (\mu,m)\in\calA_{T}\}$ the
set of admissible $\mu$.

We define the functional  $I:\calA_{T}\to[0,\infty]$ by
\begin{equation}\label{Imum}
 I(\mu,m)=
    \int_{-T}^T  \sum_{x'\in Q} \left(\mu_{t}(x')\right)^{\frac{d-1}{d}}  \, dt + 
   \int_{Q_{T}}
   \left|\frac{dm}{d\m} \right|^2 d\m, 
\end{equation}
and (with abuse of notation) $I:\calA_{T}^*\to[0,\infty]$ by
\begin{equation}\label{Imu}
I(\m)=\min \{ I(\mu,m)\ : \ m\ll\m, \ \pa_t \m + \Div m=0\}.
\end{equation}
For $-T\le a<t<b\le T$, we set 
\begin{equation}\label{defdotPE}
 \dot{P}(\mu,t)=\sum_{x'\in Q} \left(\mu_{t}(x')\right)^{\frac{d-1}{d}} \qquad \textrm{and } \qquad \dotEcin(\mu,t)=\int_Q \left|\frac{dm_t}{d\m_t} \right|^2 d\m_t.
\end{equation}
We then define 
\begin{equation}\label{defPE}
 P(\mu, (a,b))=\int_{a}^b \dot{P}(\mu,t) dt \qquad \textrm{and } \qquad  \Ecin(\mu,(a,b))=\int_{a}^b \dotEcin(\mu,t) dt
\end{equation}
so that (recall \eqref{defI}), $I(\mu,(a,b))=P(\mu,(a,b))+\Ecin(\mu,(a,b))$. As for $I$ we, recall \eqref{defImueps}, we write $P(\mu,\eps)=P(\mu,(T-\eps,T))$ and similarly for $\Ecin(\mu,\eps)$. Finally, if the measure $\mu$ is clear from the context we sometimes omit to write it.
\end{definition}
	\begin{remark}\label{rem:alpha}
	 The power $(d-1)/d$ in the definition \eqref{Imum} of $I$ comes from the fact that this term corresponds to a perimeter in the horizontal variables, see \cite{CGOS, G}. One can of course consider the case of  an arbitrary power $\alpha\in[0,1)$. Most of the results of the paper extend to this more general functionals. However, in order to keep the number of parameters in the model reasonable we decided to stick with this particular class. 
	\end{remark}
	Let us notice that by \eqref{BB} we have for $-T\le a<b\le T$ and every $\mu\in \calA^*_T$,
	\begin{equation}\label{Ecinlowerintern}
	 \Ecin(\mu,(a,b))\ge \frac{1}{b-a} W_{\per}^2(\mu_a,\mu_b).
	\end{equation}
For two measures $\bar \m_{\pm}\in \M^+(Q)$ with $\bar \m_+(Q)=\bar \m_-(Q)$ we  consider the problem 
\begin{equation}\label{limitProb}
 \inf\lt\{ I(\mu) \ : \ \mu_{\pm T}= \bar \m_{\pm} \rt\}.
\end{equation}
By \eqref{Ecinlowerintern}, if $I(\mu)<\infty$ then $t\to \mu_t$ is $1/2$-H\"older continuous as a curve in the Wasserstein space so that the traces are well-defined. Using a simple extension of the construction from \cite[Proposition 5.2]{CGOS} we have 
\begin{lemma}\label{lem:consblock}
 For $r, T>0$ let $Q^r=(0,r)^d$ and $Q^r_T=(-T,T)\times Q^r$. Then, for every $\bar \m_{\pm}\in \M^+(Q^r)$ with $\Phi=\bar \m_+( Q^r)=\bar \m_-( Q^r)$, there exists  a pair $(\mu,m)\in \M^+(Q^r_T)\times \M(Q^r_T; \R^d)$ with $m\ll \mu$ and such that 
 \begin{itemize}
  \item $\m= d{t}\otimes \m_{t}$ where, for a.e. $t\in(-T,T)$,
$\m_{t}=\sum_i \phi_i \delta_{X_i}$ for some $\phi_i> 0$ and $X_i\in Q^r$;
\item $\m_{\pm T}= \bar \m_{\pm}$;
  \item  $\partial_t \mu+ \Div m=0$ in $Q^r_T$, for $t\in (-T,T)$, $m_t \cdot \nu = 0$ on $\partial Q^r$ where $\nu$ is the exterior normal to $Q^r$;
  \item we have 
\begin{equation}\label{eq:estimIbuildingblock}
 I(\mu,m)\les T \Phi^{\frac{d-1}{d}}+ \frac{ r^2 \Phi}{T}.
\end{equation}
 \end{itemize}

\end{lemma}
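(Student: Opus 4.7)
The plan is to reduce the construction to two symmetric \textit{half blocks}. Let $c_0$ denote the center of $Q^r$. First I would build a pair $(\mu^+, m^+)$ on $(0, T) \times Q^r$ that transports $\Phi \delta_{c_0}$ at time $0$ to $\bar\mu_+$ at time $T$, is atomic on $(0, T)$, respects the no-flux condition on $\partial Q^r$, and satisfies $I(\mu^+, m^+) \les T \Phi^{(d-1)/d} + r^2 \Phi / T$. An analogous backward half $(\mu^-, m^-)$ on $(-T, 0)$ goes from $\bar\mu_-$ to $\Phi \delta_{c_0}$. Both halves match at $t = 0$ and separately solve the continuity equation, so their concatenation is an admissible competitor on $Q^r_T$ with the same scaling, and only the forward half matters.

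For this half block I propose a self-similar dyadic branching. For each $k \geq 0$ I partition $Q^r$ into $2^{kd}$ subcubes $\{Q^r_{k,i}\}$ of side $r 2^{-k}$ with centers $c_{k,i}$ (so $c_{0,1} = c_0$); I denote by $p(i)$ the index of the level-$(k-1)$ subcube containing $Q^r_{k,i}$ and set $\phi_{k,i} = \bar\mu_+(Q^r_{k,i})$, using a fixed tie-breaking convention on shared faces so that $\phi_{k-1,j} = \sum_{i : p(i) = j} \phi_{k,i}$. I then fix $a \in (1/4, 1/2)$ (say $a = 1/3$) and set $t_0 = 0$, $t_k = T(1 - a^k)$, $\tau_k = t_k - t_{k-1} = T(1-a) a^{k-1}$, so $\sum_k \tau_k = T$. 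On each $(t_{k-1}, t_k]$ I take
\[
\mu_t = \sum_i \phi_{k,i} \delta_{X_{k,i}(t)}, \qquad m_t = \sum_i \phi_{k,i} \dot X_{k,i}(t) \, \delta_{X_{k,i}(t)},
\]
with $X_{k,i}$ the affine interpolation from $c_{k-1, p(i)}$ at $t_{k-1}$ to $c_{k,i}$ at $t_k$. Mass compatibility makes the slices agree across the $t_k$'s, so $\partial_t \mu + \Div m = 0$ distributionally on $(0, T) \times Q^r$; every trajectory lies in $Q^r$, yielding $m_t \cdot \nu = 0$ on $\partial Q^r$; and $\sup_i |X_{k,i}(t) - c_{k,i}| \leq \sqrt{d}\, r 2^{-k}$ combined with the weak-$\ast$ convergence of the dyadic approximations to $\bar\mu_+$ gives $\mu_t \rightharpoonup \bar\mu_+$ as $t \to T^-$, so the trace is correct.

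The two parts of $I$ are then controlled by elementary geometric sums. On $(t_{k-1}, t_k]$ there are at most $2^{kd}$ atoms of total mass $\Phi$, so Jensen applied to the concave function $x \mapsto x^{(d-1)/d}$ gives $\dot P(\mu, t) \leq 2^k \Phi^{(d-1)/d}$, hence
\[
P(\mu^+, (0,T)) \leq \Phi^{(d-1)/d} \sum_{k \geq 1} \tau_k 2^k = 2T(1-a) \Phi^{(d-1)/d} \sum_{k \geq 1} (2a)^{k-1} \les T \Phi^{(d-1)/d}
\]
as soon as $2a < 1$. For the kinetic term, $|c_{k-1, p(i)} - c_{k,i}| \leq \sqrt{d}\, r 2^{-k}$ implies $\dotEcin(\mu, t) \leq d \Phi r^2 / (4^k \tau_k^2)$, so
\[
\Ecin(\mu^+, (0,T)) \leq d r^2 \Phi \sum_{k \geq 1} \frac{1}{4^k \tau_k} \les \frac{r^2 \Phi}{T},
\]
the series reducing to a geometric sum which converges for $4a > 1$. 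The choice $a = 1/3$ satisfies both conditions and yields \eqref{eq:estimIbuildingblock}.

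I do not expect a serious obstacle: the construction is a direct computation once the dyadic bookkeeping is set up. The only mildly delicate points are the choice of tie-breaking when $\bar\mu_+$ charges the lower-dimensional faces of the dyadic grid, and the verification of the weak-$\ast$ convergence at $t = T$ for a general (possibly singular) $\bar\mu_+$, both of which are standard.
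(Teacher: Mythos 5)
Your construction is correct: the dyadic self-similar branching with geometrically shrinking time intervals, masses read off from $\bar\mu_+$ on the dyadic subcubes, and the conditions $2a<1$, $4a>1$ giving convergence of the perimeter and kinetic sums respectively, yields exactly the bound \eqref{eq:estimIbuildingblock}, and the gluing/trace/no-flux checks are all sound. This is essentially the same argument the paper invokes, namely the (straightforward extension of the) branching construction of \cite[Proposition 5.2]{CGOS} with a general target measure in place of the uniform one.
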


Since Neumann boundary conditions are compatible with periodic boundary conditions, applying Lemma \ref{lem:consblock} with $r=1$ yields that the infimum in \eqref{limitProb} is finite. Moreover, arguing as in \cite[Proposition 5.5]{CGOS} we have existence of minimizers.
\begin{remark}
 Notice that if we considered as suggested in Remark \ref{rem:alpha} a more general power $\alpha$ in the definition of $I$, using a simple extention of  the branching construction from \cite[Proposition 5.2]{CGOS} we would have that \eqref{limitProb} is finite for every $\bar \m_{\pm}$ provided $\alpha>(d-2)/d$. In the language of branched transportation,  under this condition every measure can be irrigated from a Dirac mass.
\end{remark}

We now introduce the notion of subsystems which slightly generalizes   \cite[Proposition 5.7]{CGOS} (the proof being totally analogous we omit it).

	\begin{proposition}\label{prop:subsystem}
		Let  $t\in [-T,T]$ and  $(\mu, m)\in \calA_T$ with $I(\mu, m)<\infty$ and set  $v=dm/d\mu$. Then for every $\sigma\le \mu_t$, there exists a measure $\widetilde{\mu}\in \calA_T^*$ such that
		\begin{itemize}
			\item[i)] $\mu'=\mu-\widetilde{\mu}$ is a positive measure;
			\item[ii)] $\widetilde{\mu}_t=\sigma$;
			\item[iii)] $\widetilde{\mu}$ satisfies the continuity equation
			$
			\partial_t\widetilde\mu+\Div \lt(v \tilde{\mu}\rt)=0.
			$
		\end{itemize}
		In particular $(\tilde{\mu},v\tilde{\mu})\in \calA_T$ and $I(\tilde{\mu},v \tilde{\mu})\le I(\mu,m)$. Similarly $(\mu',v\mu')\in \calA_T$ with $I(\mu',v\mu')\le I(\mu,m)$.
	\end{proposition}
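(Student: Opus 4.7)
The plan is to realise $\widetilde\mu$ as a Lagrangian ``sub-flow'' of $\mu$, obtained by selecting at time $t$ a fraction of the trajectories passing through each atom of $\mu_t$. Since $(\mu,m)\in\calA_T$ has finite energy and solves the continuity equation \eqref{conteq0} with $v=dm/d\mu\in L^2(\mu)$, I would first invoke the superposition principle (Ambrosio-Gigli-Savar\'e / Smirnov) to obtain a positive measure $\eta$ on the space $\Gamma$ of absolutely continuous curves $\gamma\colon[-T,T]\to Q$, with $\dot\gamma(s)=v(\gamma(s),s)$ for a.e.~$s$ and $\eta$-a.e.~$\gamma$, such that $(e_s)_\#\eta=\mu_s$ for a.e.~$s\in(-T,T)$, where $e_s(\gamma)=\gamma(s)$.

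Next I would disintegrate $\eta$ at time $t$. Since $\mu_t=\sum_i\phi_i(t)\delta_{X_i(t)}$ is atomic, $\eta$ splits as $\eta=\sum_i\eta_i$, where $\eta_i$ concentrates on those curves with $\gamma(t)=X_i(t)$ and has total mass $\phi_i(t)$. Writing $\sigma=\sum_i\psi_i\delta_{X_i(t)}$ with $0\le \psi_i\le \phi_i(t)$, I set
\[
 \eta^\sigma=\sum_i\frac{\psi_i}{\phi_i(t)}\,\eta_i,\qquad \widetilde\mu=\int_\Gamma \delta_{\gamma(\cdot)}\,d\eta^\sigma(\gamma),\qquad \mu'=\mu-\widetilde\mu.
\]
By construction $0\le \widetilde\mu\le \mu$, so $\mu'\ge 0$, giving (i); at time $t$ we recover $\widetilde\mu_t=\sigma$, giving (ii); and since every curve in $\mathrm{spt}\,\eta^\sigma$ still has velocity $v$, the continuity equation $\partial_t\widetilde\mu+\Div(v\widetilde\mu)=0$ follows, giving (iii). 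The same argument applied to $\eta-\eta^\sigma$ yields the analogous statements for $\mu'$.

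For the energy comparison, observe that for a.e.~$s$ the measure $\widetilde\mu_s=\sum_i\widetilde\psi_i(s)\delta_{X_i(s)}$ with $\widetilde\psi_i(s)\le \phi_i(s)$. Monotonicity of $r\mapsto r^{(d-1)/d}$ then yields $\dot P(\widetilde\mu,s)\le \dot P(\mu,s)$, and since $\widetilde\mu_s\le \mu_s$ share the common Radon--Nikod\'ym density $v$, also $\int_Q |v|^2\,d\widetilde\mu_s\le \int_Q|v|^2\,d\mu_s=\dotEcin(\mu,s)$. Integrating in time produces $I(\widetilde\mu,v\widetilde\mu)\le I(\mu,m)$, and the same computation with $\psi_i$ replaced by $\phi_i(t)-\psi_i$ gives the parallel bound for $(\mu',v\mu')$.

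The main technical obstacle is the Lagrangian step itself: $v$ is only $\mu$-a.e.~defined and the slices $\mu_s$ are purely atomic, so one has to verify both that the superposition principle applies in this low-regularity setting and that the time-$t$ disintegration can be realised in a measurable way so that each $\eta_i$ carries precisely mass $\phi_i(t)$. Precisely this book-keeping is performed in \cite[Proposition~5.7]{CGOS} in a slightly more restrictive setting (where $\sigma$ is concentrated on a single atom), and our statement follows from an identical argument applied to an arbitrary sub-mass $\sigma\le \mu_t$; the verifications of (i)--(iii) and of the energy bound are then direct.
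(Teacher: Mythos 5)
Your proof is correct and follows essentially the route the paper intends: the proof is omitted there with a reference to \cite[Proposition 5.7]{CGOS}, whose argument is exactly the Lagrangian superposition decomposition you describe, extended to a general sub-measure $\sigma\le\mu_t$ by taking the fractions $\psi_i/\phi_i(t)$ of the curve bundles through each atom. Your verifications of (i)--(iii) and of the energy comparison (monotonicity of $r\mapsto r^{(d-1)/d}$ atom by atom, plus $\widetilde\mu_s\le\mu_s$ for the kinetic term with common density $v$) are precisely the intended ones, so there is nothing substantive to add.
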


	In view of the previous proposition, when $\mu_t=\sum_i \phi_i \delta_{X_i}$ and $\sigma_t=\phi_i \delta_{X_i}$ for some $i$ (which is exactly the case considered in \cite[Proposition 5.7]{CGOS})  we denote $\mu^{t,i}_+=\widetilde{\mu}\restr (t,T)\times Q$
	the forward subsystem emanating from $X_i$ and
	$\mu^{t,i}_-=\widetilde{\mu}\restr (-T,t)\times Q$ the backward subsystem emanating from $X_i$. Let us recall from \cite[Lemma 5.8]{CGOS} the no-loop property of minimizers (this is actually a small extension of that lemma, using Proposition \ref{prop:subsystem} instead of \cite[Proposition 5.7]{CGOS}).
	\begin{lemma}\label{lem:nolopp}
	 Let $\mu$ be a minimizer of \eqref{limitProb} and  $t\in (-T,T)$. For $X_1$ and $X_2$ in $Q$ such that  $\mu_t(X_i)\neq 0$ for $i=1,2$, denote by $\mu^{t,i}_{\pm}$ the forward and backward subsytems emanating from $X_i$. If there exist $-T\le t_-<t<t_+\le T$  such that for $\ast\in\{\pm\}$, $ \mu^{ t_\ast,1}_{\ast}$ and $ \mu^{t_{\ast},2}_{\ast}$ are not mutually singular, then $X_1=X_2$.
	\end{lemma}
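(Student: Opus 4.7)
The plan is to argue by contradiction: assume $X_1\neq X_2$ and produce an admissible competitor $\tilde\mu$ with $I(\tilde\mu)<I(\mu)$. First I would distill a common piece of mass shared by the two backward subsystems at time $t_-$ and by the two forward ones at time $t_+$. Concretely, since $\mu^{t_-,1}_-$ and $\mu^{t_-,2}_-$ are not mutually singular, their time-$t_-$ slices admit a common sub-measure $\sigma_-$ of positive mass; applying Proposition \ref{prop:subsystem} to each of the backward subsystems with $\sigma_-$ as target at time $t_-$ produces two sub-subsystems $\nu^1_-$ and $\nu^2_-$ on $(t_-,t)\times Q$ with identical time-$t_-$ trace $\sigma_-$, but terminating at $X_1$ and $X_2$ respectively at time $t$. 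The symmetric construction forward on $(t,t_+)$ produces $\nu^1_+,\nu^2_+$ sharing a common time-$t_+$ trace $\sigma_+$. After scaling all four to a common small mass $\eps\ll 1$, I have two parallel mass channels of flux $\eps$ linking $\sigma_-$ to $\sigma_+$ through $X_1$ and $X_2$ respectively.

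Next I would build $\tilde\mu$ by a swap-and-merge on the slab $(t_-,t_+)\times Q$, leaving $\mu$ unchanged outside. On $(t_-,t)\times Q$, I replace $\nu^1_-+\nu^2_-$ by a single sub-system of mass $2\eps$ going from $\sigma_-$ to the single atom $X_1$ at time $t$, obtained through Lemma \ref{lem:consblock} on a cube containing both $X_1,X_2$ and the support of $\sigma_-$; the symmetric operation is performed on $(t,t_+)$. Mass balance and the continuity equation are preserved because the modification is internal to the combined sub-subsystems. The point of the construction is then to compare the two energies: on each sub-slab the original contribution to $\dot P$ is at least $2\eps^{(d-1)/d}$, while the merged one is $(2\eps)^{(d-1)/d}$. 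Strict concavity of $s\mapsto s^{(d-1)/d}$ for $d\ge 2$ gives a net saving of order $(t_+-t_-)\,(2-2^{(d-1)/d})\,\eps^{(d-1)/d}$. The kinetic cost of the replacement is controlled, via \eqref{eq:estimIbuildingblock} applied with $r$ bounded by the diameter of the relevant cube and $T$ replaced by $(t_+-t_-)/2$, by $C\eps/(t_+-t_-)$; hence it is linear in $\eps$. Since $(d-1)/d<1$, for $\eps$ small the perimeter gain dominates, so $I(\tilde\mu)<I(\mu)$, contradicting minimality.

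The main obstacle I expect is the bookkeeping of the subsystem extraction and of the competitor construction. Proposition \ref{prop:subsystem} is stated for an arbitrary sub-measure $\sigma\le\mu_t$, so the common pieces $\sigma_\pm$ need not be Dirac masses; one must therefore be careful that the reference cube used when invoking Lemma \ref{lem:consblock} covers the supports of $\sigma_\pm$ together with $\{X_1,X_2\}$, which on the torus is achieved by working in a ball of diameter at most $\mathrm{diam}\,Q$, absorbing this into the constants. A second minor point is that after the swap the new $\tilde\mu$ may have an atom of mass $2\eps$ at $X_1$ at time $t$ merged with the existing atom there, which is harmless since the perimeter counts a point once regardless of mass and only helps the concavity gain. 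With these details handled, the swap-and-merge is exactly the one of \cite[Lemma 5.8]{CGOS}, with Proposition \ref{prop:subsystem} used in place of \cite[Proposition 5.7]{CGOS} to allow non-atomic common pieces, and the strict inequality forces $X_1=X_2$.
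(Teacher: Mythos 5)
There is a genuine gap, and it sits exactly where your energy bookkeeping happens: in the comparison ``original $\ge 2\eps^{(d-1)/d}$ versus merged $=(2\eps)^{(d-1)/d}$''. The perimeter term in $I$ is $\sum_i \phi_i^{(d-1)/d}$ computed on the \emph{total} atom masses of $\mu_t$, not on the masses of the extracted channels. The sub-subsystems $\nu^{1}_\pm,\nu^{2}_\pm$ produced by Proposition \ref{prop:subsystem} ride along branches of $\mu$ that generically carry mass much larger than $\eps$; removing mass $\eps$ from an atom of mass $\phi_i$ lowers its contribution only by $\phi_i^{(d-1)/d}-(\phi_i-\eps)^{(d-1)/d}=O(\eps\,\phi_i^{-1/d})$, i.e.\ the saving is linear in $\eps$, not of order $\eps^{(d-1)/d}$. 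On the other hand the fresh structure you build with Lemma \ref{lem:consblock} has branches that do not coincide with those of $\mu$, so it \emph{adds} a full $(t_+-t_-)(2\eps)^{(d-1)/d}$ of new perimeter (plus kinetic cost $\sim\eps/(t_+-t_-)$). For small $\eps$ the competitor is therefore strictly \emph{more} expensive, and no contradiction with minimality is obtained; the inequality goes the wrong way. A telling symptom is that your claimed gain does not use $X_1\neq X_2$ at all, whereas the conclusion must hinge on it. Also, the remark that ``the perimeter counts a point once regardless of mass'' is only true in the $d=1$ toy model of Section \ref{sec:wasserstein}; for the functional of this lemma the cost is $\phi^{(d-1)/d}$ and does depend on mass, which is precisely what invalidates the accounting.

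The paper does not reprove the lemma: it invokes \cite[Lemma 5.8]{CGOS}, with Proposition \ref{prop:subsystem} replacing \cite[Proposition 5.7]{CGOS}, and the argument there is a \emph{mass swap along existing trajectories}, not a merge via a new construction. Concretely, extract from the backward/forward subsystems two channels $\nu^1,\nu^2$ of equal mass on $(t_-,t_+)$, passing through $X_1$ and $X_2$ respectively and sharing the same traces at $t_-$ and $t_+$, each transported by the \emph{same} velocity $v=dm/d\mu$ as $\mu$ (this is what Proposition \ref{prop:subsystem} provides). Then $\mu_s=\mu+s(\nu^1-\nu^2)$, $s\in[-1,1]$, is admissible with unchanged traces at $\pm T$; its kinetic part is affine in $s$ (same velocity field, masses affine in $s$) and its perimeter part is concave in $s$. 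Minimality of $\mu=\mu_0$ at the interior point $s=0$ forces $s\mapsto I(\mu_s)$ to be affine, while if $X_1\neq X_2$ the masses at the two distinct atoms near time $t$ vary affinely with nonzero slope, so strict concavity of $r\mapsto r^{(d-1)/d}$ makes $I(\mu_s)$ strictly concave there --- a contradiction. No call to Lemma \ref{lem:consblock} and no $\eps^{(d-1)/d}$-versus-$\eps$ comparison is needed; reusing the velocity field is what keeps the kinetic change linear and makes the concavity argument close.
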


% 	\begin{lemma}\label{lem:nolopp}
% 	 Let $\mu$ be a minimizer of \eqref{limitProb} and  $t\in (-T,T)$. For $X_1$ and $X_2$ in $Q$ such that  $\mu_t(X_i)\neq 0$ for $i=1,2$, denote by $\mu^i_{\pm}$ the forward and backward subsytems emanating from $X_i$. If there exist $-T\le t_-<t<t_+\le T$ and  $X_{\pm}\in Q$ such that $ \mu^i_{\pm,t_{\pm}}(X_\pm)\neq 0$, then $X_1=X_2$.
% 	\end{lemma}

In most of the paper we will consider the case where $\bar \m_+=\bar \m_-$ in \eqref{limitProb}. In that case, by symmetrization we may concentrate on minimizers of \eqref{limitProb} which are symmetric with respect to $t=0$.
\begin{remark}\label{rem:noloopsym}
 If $\mu$ is a symmetric minimizer, applying Lemma \ref{lem:nolopp} with $t_{\pm}=\pm T$ we find that for every $t\ge 0$, if $\mu_t=\sum_i \phi_i \delta_{X_i}$ with $X_i\neq X_j$ for $i\neq j$, then denoting by $\mu^{t,i}$ the forward subsystems starting from $X_i$, $\mu^{t,i}_T$ and $\mu^{t,j}_T$ must be mutually singular.
\end{remark}
Arguing as in \cite[Proposition 5.10\&  Proposition 5.11]{CGOS} we have
\begin{proposition}\label{prop:structure}
 Let $\bar \mu\in \M^+(Q)$ and $\mu$ be a symmetric minimizer of \eqref{limitProb} with $\bar \m_{\pm}=\bar \m$. Then,
 for every $T'<T$, $\mu$ is a finite graph  in $Q_{T'}$ without loops. In particular we have
 \begin{equation}\label{reprI}
  I(\mu)=\int_{-T}^{T} \sum_i \lt[ \phi_i^{\frac{d-1}{d}} + \phi_i |\dot{X_i}|^2\rt]  dt
 \end{equation}
where the sum is locally finite and between two branching points, $t\mapsto \phi_i(t)$ is constant while  $t\mapsto X_i(t)$ is affine. From now on we will always use the representation \eqref{reprI} of $I$.

\end{proposition}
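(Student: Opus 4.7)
The proof consists of two parts: first I would establish the representation \eqref{reprI}, showing that between branching events the masses are constant and the trajectories affine; second I would prove that only finitely many branching events occur in the slab $Q_{T'}$. Both parts follow the blueprint laid out in \cite[Propositions~5.10 and~5.11]{CGOS}.

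For the representation, fix a filament carrying mass $\phi_i$ on a maximal time interval $(t_a,t_b)$ containing no branching event, and let $\tilde\mu$ be the one-filament subsystem produced by Proposition \ref{prop:subsystem}. Since $\tilde\mu_t=\phi_i(t)\delta_{X_i(t)}$, the distributional continuity equation $\partial_t\tilde\mu+\Div(v\tilde\mu)=0$ immediately forces $\phi_i$ to be constant on $(t_a,t_b)$. With $\phi_i$ constant and the boundary values $X_i(t_a)$, $X_i(t_b)$ fixed, a standard convexity/Jensen argument shows that $\int_{t_a}^{t_b}\phi_i|\dot X_i|^2\,dt$ is strictly minimized by the affine interpolant: any other choice would, via Proposition \ref{prop:subsystem}, yield an admissible competitor strictly lowering $I(\mu)$, violating minimality. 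The perimeter term then reduces to $(t_b-t_a)\phi_i^{(d-1)/d}$, giving \eqref{reprI}.

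For the local finiteness in $Q_{T'}$, I would exploit the symmetric no-loop property of Remark \ref{rem:noloopsym}. At any $t\in[0,T']$, write $\mu_t=\sum_i\phi_i\delta_{X_i}$ and let $\tilde\mu^{t,i}_+$ be the forward subsystem emanating from $X_i$; by Remark \ref{rem:noloopsym} these subsystems have mutually singular traces at time $T$, so they decouple and $I(\mu,(t,T))=\sum_i I(\tilde\mu^{t,i}_+)$. Since each such subsystem carries a perimeter contribution of at least $(T-t)\phi_i^{(d-1)/d}$ on $(t,T)$, one obtains the uniform bound
\[
\sum_i\phi_i^{(d-1)/d}\le\frac{I(\mu)}{T-T'}\qquad\text{for all }t\in[0,T'].
\]
The second key ingredient is the strict concavity of $s\mapsto s^{(d-1)/d}$: at a branching splitting $\phi$ into $\phi_1,\dots,\phi_k$ with $\sum_j\phi_j=\phi$, the perimeter rate jumps by $\sum_j\phi_j^{(d-1)/d}-\phi^{(d-1)/d}>0$, and this jump persists until the next branching event along each descendant. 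Summing these discrete contributions against the (finite) total perimeter budget in $Q_{T'}$, and using the above uniform bound to prevent accumulation of masses to zero, rules out accumulation of branching events and yields the finite graph structure. Symmetry extends the conclusion to $[-T',0]$.

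The principal obstacle is the last bookkeeping step: the pointwise bound on $\sum_i\phi_i^{(d-1)/d}$ does not by itself preclude infinitely many filaments of very small mass, so one must carefully combine the strict concavity-induced branching cost with the mutual-singularity structure of forward subsystems (and the fact that all filaments in $Q_{T'}$ have at least a definite ``temporal length'' $T-T'$ until they first exit the slab) to rule out accumulation of branchings at any interior time. This is precisely the content of the arguments in \cite[Proposition~5.10]{CGOS}, which adapt with only notational changes to the present setting.
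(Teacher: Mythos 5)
The paper offers no self-contained proof of this proposition: it simply invokes \cite[Propositions 5.10 and 5.11]{CGOS}, so at the level of strategy your proposal coincides with the paper's. Your first part is essentially right: on a branching-free interval the one-filament subsystem from Proposition \ref{prop:subsystem} has conserved mass by the continuity equation, and straightening its trajectory produces an admissible competitor (the perimeter can only decrease, by subadditivity, if the straightened filament happens to meet other filaments), so Jensen's inequality forces the affine structure and yields \eqref{reprI} on each branching-free interval.

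The genuine gap is in the mechanism you sketch for finiteness of the branchings, and it is real even though you flag it. Neither the uniform bound $\sum_i\phi_i^{(d-1)/d}\le I(\mu)/(T-T')$ (which in any case follows more directly from the monotonicity of $\dot P$ in Remark \ref{rem:noloop}; note also that your asserted equality $I(\mu,(t,T))=\sum_i I(\mu^{t,i}_+)$ for the perimeter part would require mutual singularity of the subsystems at a.e.\ intermediate time, not only of their traces at $T$ as in Remark \ref{rem:noloopsym}) nor the strict positivity of the concavity jumps rules out infinitely many branchings: a configuration with one macroscopic filament and $N$ filaments of mass $\eps_N$ with $N\eps_N^{(d-1)/d}$ bounded has bounded perimeter rate, and the associated jumps are arbitrarily small and summable, so ``summing the jumps against the perimeter budget'' cannot conclude. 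What actually closes the argument in \cite{CGOS} is a quantitative, minimality-based lower bound on the mass of any branch alive at distance at least $T-T'$ from the boundary: deleting a putative small branch of mass $\phi$ and rerouting its flux through a nearby branch costs kinetic energy of order $\phi r^2/(T-T')$ (with $r$ controlled since everything lives in the unit torus), while saving perimeter of order $(T-T')\phi^{(d-1)/d}$, so minimality forces $\phi$ to be bounded below by a constant depending only on $T-T'$ and $I(\mu)$; combined with $\sum_i\phi_i=1$ and the monotonicity of $\sharp\,\spt\mu_t$ this bounds the number of branches, hence of branching points, in $Q_{T'}$. Since you ultimately defer precisely this step to \cite[Proposition 5.10]{CGOS} --- exactly as the paper does --- your proposal is acceptable as a citation, but the bookkeeping you propose in its place would not suffice on its own.
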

\begin{remark}\label{rem:noloop}
 Notice that from the no-loop condition and the subbaditivity of $\phi\mapsto \phi^{\frac{d-1}{d}}$ both $\sharp \spt \mu_t$ and $\dot{P}(\mu,t)=\sum_i \phi_i(t)^{\frac{d-1}{d}}$ are increasing in $(0,T)$.
\end{remark}

We end this section by proving equi-partition of the energy. For similar results in related models see e.g. \cite{KM94,BeJuOt,CDKZ}.
\begin{proposition}\label{prop:equipar}
 For $\bar \mu\in \M^+(Q)$ let $\mu$ be a symmetric minimizer of \eqref{limitProb} with $\bar \m_{\pm}= \bar \m$. There exists $\Lambda\in \R$
 with 
 \begin{equation}\label{eq:estimeta}
  |\Lambda|\les \frac{I(\mu)}{T}
 \end{equation}
 such that recalling the definition \eqref{defdotPE}, we have for $t\in (-T,T)$
 \begin{equation}\label{eq:equiparstate}
  \dot{P}(\mu,t)= \dotEcin(\mu,t)+\Lambda. 
 \end{equation}
\end{proposition}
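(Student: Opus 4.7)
My plan is to carry out a first-variation argument using smooth time reparametrizations that fix the endpoints $\pm T$. Given any $\eta\in C^1_c((-T,T))$, for $|\eps|$ sufficiently small the map $\psi_\eps(t):=t+\eps\,\eta(t)$ is an increasing diffeomorphism of $[-T,T]$ onto itself. I then set
\[
\tilde\mu^\eps_t:=\mu_{\psi_\eps(t)},\qquad \tilde m^\eps_t:=\dot\psi_\eps(t)\,m_{\psi_\eps(t)}.
\]
A direct computation from $\pa_t\mu+\Div m=0$ shows $(\tilde\mu^\eps,\tilde m^\eps)\in\calA_T$, while $\psi_\eps(\pm T)=\pm T$ yields $\tilde\mu^\eps_{\pm T}=\bar\mu$. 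Thus $\tilde\mu^\eps$ is admissible for \eqref{limitProb}.

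Using the representation of $I$ from Proposition \ref{prop:structure} and the change of variable $s=\psi_\eps(t)$ in the resulting integral, the energy rewrites as
\[
I(\tilde\mu^\eps)=\int_{-T}^T\left[\frac{\dot P(\mu,s)}{a_\eps(s)}+a_\eps(s)\,\dotEcin(\mu,s)\right]ds,
\]
where $a_\eps(s):=\dot\psi_\eps(\psi_\eps^{-1}(s))$ satisfies $a_0\equiv 1$ and $\partial_\eps a_\eps|_{\eps=0}=\dot\eta$. Minimality of $\mu$ then gives
\[
0=\frac{d}{d\eps}\bigg|_{\eps=0}I(\tilde\mu^\eps)=\int_{-T}^T\bigl[\dotEcin(\mu,s)-\dot P(\mu,s)\bigr]\,\dot\eta(s)\,ds
\]
for every admissible $\eta$. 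By the Du Bois-Reymond lemma there exists a constant $\Lambda\in\mr$ such that $\dot P(\mu,\cdot)-\dotEcin(\mu,\cdot)=\Lambda$ a.e., which is precisely \eqref{eq:equiparstate}.

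Integrating this identity over $(-T,T)$ yields $P(\mu)-\Ecin(\mu)=2T\Lambda$, and hence $|\Lambda|\le I(\mu)/(2T)$, which gives \eqref{eq:estimeta}. The only technical subtlety is the legitimacy of differentiating under the integral sign in the presence of branching times, but Proposition \ref{prop:structure} ensures that $\mu$ is a locally finite graph away from $t=\pm T$, so on any compact subinterval of $(-T,T)$ the functions $\dot P(\mu,\cdot)$ and $\dotEcin(\mu,\cdot)$ are bounded and piecewise constant, and standard dominated convergence applies. I do not expect any deeper obstacle.
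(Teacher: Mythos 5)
Your proof is correct and follows essentially the same route as the paper: an inner variation by a time reparametrization fixing $t=\pm T$, a change of variables giving the first variation $\int(\dotEcin-\dot P)\dot\eta$, Du Bois-Reymond to obtain the constant $\Lambda$, and integration over $(-T,T)$ together with $\dot P,\dotEcin\ge 0$ to get \eqref{eq:estimeta}. The only cosmetic difference is that you compose with $\psi_\eps$ rather than with $\xi_\eps^{-1}$ as in the paper, which changes nothing in substance.
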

\begin{proof}
 As usual the proof goes through internal variations. Let $\xi\in C^\infty_c( -T,T)$. For $|\eps|$ small enough, $\xi_\eps={\rm Id} + \eps \xi$ is an increasing diffeomorphism of $(-T,T)$ into itself. We consider the competitor given by $\what{\phi}_i(t)=\phi_i(\xi_\eps^{-1}(t))$ and $\what{X}_i(t)=X_i(\xi_\eps^{-1}(t))$. Notice that by Proposition \ref{prop:structure} the number of branches in the support of $\xi_\eps^{-1}-{\rm Id}$ is finite so that this definition is not ambiguous. By minimality of $\mu$ we have 
 \begin{align*}
  I(\mu)&\le \int_{-T}^T \sum_i \lt[ (\phi_i\circ \xi_\eps^{-1})^{\frac{d-1}{d}} +  |\dot{\xi}_\eps^{-1}|^{ 2}(\phi_i\circ \xi_\eps^{-1}) |\dot{X_i}\circ \xi_\eps^{-1}|^2\rt]  dt\\
  &\stackrel{s=\xi_\eps^{-1}(t)}{=}\int_{-T}^T \sum_i \lt[\phi_i^{\frac{d-1}{d}}(1+\eps \dot{\xi})^{-1} + (1+\eps \dot{\xi}) \phi_i|\dot{X}_i|^2\rt] ds\\
  &= I(\mu) + \eps \int_{-T}^T (\dotEcin-\dot{P}) \dot{\xi} ds + O(\eps^2).
 \end{align*}
This concludes the proof of \eqref{eq:equiparstate}. Integrating \eqref{eq:equiparstate} we have 
\[
 \int_{-T}^T \dot{P} =\int_{-T}^T \dotEcin + 2 T \Lambda.
\]
Since $\dot{P},\dotEcin\ge0$ and 
\[
 \int_{-T}^T \dot{P} +\int_{-T}^T \dotEcin=I(\mu)
\]
we also get \eqref{eq:estimeta}.
\end{proof}
\begin{remark}
 Let us point out that the result also  holds  without the symmetry assumption (and for general boundary data). We only stated it in this form so that Proposition \ref{prop:structure} applies. This avoids technicalities to fully justify the computations.    
\end{remark}

\subsection{Sobolev spaces, Hausdorff dimensions and capacities}\label{sec:Sob}
For a measure $\sigma\in \M(Q)$, we denote by $(\what{\sigma}_k)_{k\in \Z^d}$ its Fourier coefficients. We then set for $\gamma\in \R$, 
\begin{equation}\label{def:Hgamma}
 \|\sigma\|_{H^{-\gamma}}^2=\sum_{k\in \Z^d} |k|^{-2\gamma} |\what{\sigma}_k|^2.
\end{equation}
Notice in particular that for  $\gamma>0$, $\|\sigma\|_{H^{-\gamma}}<\infty$  forces  $ \what{\sigma}_0=\sigma(Q)=0$.
 A crucial ingredient in the proof of Theorem \ref{thm:main1} is the following  characterization of negative Sobolev norms.
\begin{proposition}\label{prop:H-ssemigroup}
	For every $\gamma>0$ there holds
	\begin{equation}\label{car2}
		\|\sigma\|_{H^{-\gamma}}^2\sim\int_0^1 \eta^{2\gamma+1}\lt(\sum_{|k|\le \eta^{-1}} |k| |\sigma_k|^2\rt) \frac{d\eta}{\eta}.
	\end{equation}
	
\end{proposition}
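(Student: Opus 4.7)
The plan is to establish the equivalence \eqref{car2} by a direct Fubini-type interchange, which reduces the identity to a one-line computation of a power integral. Since the quantities on both sides are non-negative, all manipulations are justified without any convergence issue.

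First I would rewrite the constraint $|k|\le \eta^{-1}$ inside the sum as $\eta\le 1/|k|$ and swap the order of summation and integration. Since $\|\sigma\|_{H^{-\gamma}}<\infty$ forces $\what\sigma_0=0$ (and the $k=0$ term contributes nothing to the right-hand side anyway), only frequencies $k\in \Z^d\setminus\{0\}$, for which $|k|\ge 1$, need to be considered. This yields
\begin{equation*}
\int_0^1 \eta^{2\gamma+1}\lt(\sum_{|k|\le \eta^{-1}} |k|\, |\what\sigma_k|^2\rt)\frac{d\eta}{\eta} = \sum_{k\neq 0} |k|\, |\what\sigma_k|^2 \int_0^{1/|k|} \eta^{2\gamma}\, d\eta.
\end{equation*}

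Next I would evaluate the inner integral. The assumption $\gamma>0$ guarantees that $2\gamma+1>0$, so
\begin{equation*}
\int_0^{1/|k|}\eta^{2\gamma}\,d\eta = \frac{1}{2\gamma+1}\,|k|^{-(2\gamma+1)}.
\end{equation*}
Substituting back, the factor $|k|$ combines with $|k|^{-(2\gamma+1)}$ to produce exactly the weight $|k|^{-2\gamma}$ appearing in the definition \eqref{def:Hgamma}. The right-hand side of \eqref{car2} is therefore equal to $\frac{1}{2\gamma+1}\|\sigma\|_{H^{-\gamma}}^2$, which gives the claimed equivalence with the explicit (dimension-independent) constant.

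There is no real obstacle here: the proof is essentially bookkeeping. The only ``structural'' remark worth stating in the paper is that the hypothesis $\gamma>0$ is precisely what ensures integrability of $\eta^{2\gamma}$ near $0$, and more importantly what produces the correct scaling exponent so that the weight $|k|$ inside the truncated sum is upgraded to $|k|^{-2\gamma}$ after integrating $\eta$ up to the cutoff $1/|k|$. In particular the relation in \eqref{car2} is in fact an equality up to the multiplicative constant $\frac{1}{2\gamma+1}$, not merely an equivalence.
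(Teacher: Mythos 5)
Your proof is correct and follows essentially the same route as the paper: swap the sum and the $\eta$-integral by Fubini (harmless since all terms are non-negative), then evaluate $\int_0^{1/|k|}\eta^{2\gamma}\,d\eta=\frac{1}{2\gamma+1}|k|^{-(2\gamma+1)}$ so that the weight $|k|$ becomes $|k|^{-2\gamma}$. Your additional observations --- that only $k\neq 0$ matters and that \eqref{car2} is in fact an equality with constant $\frac{1}{2\gamma+1}$ --- are accurate refinements of the same computation.
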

\begin{proof} 
	We begin from the right-hand side and use Fubini to get 
	\begin{align*}
		\int_0^1 \eta^{2\gamma+1}\lt(\sum_{|k|\le \eta^{-1}} |k| |\sigma_k|^2\rt) \frac{d\eta}{\eta}&=\sum_k|k| |\sigma_k|^2\int_0^1 \chi_{\{\eta\le |k|^{-1}\}} \eta^{2\gamma+1} \frac{d\eta}{\eta}\\
		&= \sum_k|k| |\sigma_k|^2\int_0^{|k|^{-1}} \eta^{2\gamma} \frac{d\eta}{\eta}\\
		&\sim \sum_k|k| |k|^{-(2\gamma+1)}|\sigma_k|^2\\
		&= \sum_k |k|^{-2\gamma}|\sigma_k|^2\\
		&=\|\sigma\|_{H^{-\gamma}}^2.
	\end{align*}
\end{proof}
\begin{remark}\label{rem:semigroup}
 If we consider $\rho_\eta=\eta^{-d}\rho(\cdot/\eta)$ a standard  compactly supported convolution kernel and set $\sigma_\eta=\rho_\eta \ast \sigma$   using for instance Fourier series we see that  \eqref{car2} is equivalent to
 \[
  \|\sigma\|_{H^{-\gamma}}^2\sim\int_0^1 \eta^{2\gamma+1} \int_Q |\, |\nabla|^{\frac{1}{2}} \sigma_\eta\,|^2 dx  \frac{d\eta}{\eta}.
 \]
Here $|\nabla|^{\frac{1}{2}}$ can be defined for instance through its Fourier symbol by $\what{|\nabla|^{\frac{1}{2}} \sigma}_k=|k|^{\frac{1}{2}} \what{\sigma}_k$. This is a variant of classical semi-group characterizations of Sobolev spaces, see \cite{Triebel}. After the work \cite{Hairer} of Hairer these have gain a lot of popularity in the study of PDEs with randomness, see e.g. \cite{AKMbook,OtWe,GolHues}. To the best of our knowledge, it is however the first time that it is used in such a context.
\end{remark}
We now recall some definitions from geometric measure theory. We will need to consider both measure in $Q$ and in $\R^d$. Denoting by $\H^\alpha$ the $\alpha-$dimensional Hausdorff measure, we recall (see e.g. \cite{M}) that for $A\subset \X$ with $\X\in\{ Q,\R^d\}$ the Hausdorff dimension of $A$ is defined as
\[
 \dimX A=\sup\{ \alpha \, : \, \H^\alpha(A)>0\}=\inf\{\alpha \, : \, \H^\alpha(A)=0\}.
\]
For  measures there are many ways to define upper and lower dimensions \cite{MaMoRey}. We will use the same definition as in  \cite{BaGen}. For $\sigma\in \M^+(\X)$ we define the lower and upper dimensions as
%\begin{multline*}
\[
 \dimlX \sigma=\inf\{ \dimX A \, : \, \sigma(A)>0\}%\\
 \quad \textrm{ and } \quad  \dimuX \sigma=\inf\{ \dimX A \, : \,  \sigma(\X\backslash A)=0\}.
\]
 %\end{multline*}
If $\dimlX \sigma=\dimuX \sigma$, we denote by $\dim_{\X}\, \sigma$ this common value. Notice  of course that every $\sigma\in \M^+(Q)$ can be considered as a locally finite and periodic measure on $\R^d$. Its dimension on $Q$ and $\R^d$ coincide. When it is clear from the context, we will omit to specify $\X$ in the notation.\\
It is not hard to see that 
\[
 \dimlX \sigma=\sup\{ \alpha \, : \, \sigma \ll \H^\alpha\}\quad
\textrm{ and } \quad  \dimuX \sigma=\inf\{ \alpha \, : \,  \sigma \perp \H^\alpha\}.
\]
When $\X=\R^d$ we recall the connection between (lower) Hausdorff dimension and capacities. For $\alpha\in (0,d)$ and $\sigma\in \M^+(\R^d)$ we define the Riesz energy as
\[
 V_\alpha(\sigma)=\int_{\R^d\times\R^d} \frac{d\sigma(x) d\sigma(y)}{|x-y|^\alpha}.
\]
The $\alpha-$capacity of a set $A\subset \R^d$ is then given by
\[
 C_\alpha(A)=\sup\{ V_\alpha^{-1}(\sigma) \, : \,  \sigma\in \M^+(A), \,  \sigma(A)=1\}.
\]
Finally the capacitary dimension of a set $A$ is given by
\[
 \dim_c\,  A=\sup\{ \alpha  \, : \, C_\alpha(A)>0\}=\inf \{\alpha \, : \, C_\alpha(A)=0\}.
\]
By Frostman Lemma, see \cite[Theorem 8.9]{M} we have for $A\subset \R^d$,
\[
 \dim_{\R^d} \, A=\dim_c\,  A.
\]
As a consequence we have the following bound on the lower dimension of a measure in $\R^d$.
\begin{lemma}\label{lemdim}
 Let $\sigma\in \M^+(\R^d)$ with $\sigma(\R^d)<\infty$. Then
 \begin{equation}\label{eq:characlower}
  \underline{\dim}_{\R^d}\, \sigma  {\ge} \sup\{ \alpha \, : \, V_\alpha(\sigma)<\infty\}.
 \end{equation}
\end{lemma}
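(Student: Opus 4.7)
The plan is to reduce the statement to the Frostman identification $\dim_{\R^d} A = \dim_c A$ recalled just before the lemma, by showing that any Borel set carrying positive $\sigma$-mass must have positive $\alpha$-capacity whenever $V_\alpha(\sigma)<\infty$. So fix $\alpha>0$ with $V_\alpha(\sigma)<\infty$. By the first equivalent characterization of $\underline{\dim}_{\R^d}\,\sigma$ given in the text, it suffices to prove $\dim_{\R^d} A\ge \alpha$ for every Borel set $A\subset\R^d$ with $\sigma(A)>0$.

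Given such an $A$, I would consider the restriction $\sigma_A:=\sigma\restr A$. Since the Riesz kernel $|x-y|^{-\alpha}$ is nonnegative, monotonicity of the integral gives $V_\alpha(\sigma_A)\le V_\alpha(\sigma)<\infty$. Normalizing, the probability measure $\widetilde{\sigma}:=\sigma_A/\sigma_A(A)\in \M^+(A)$ still satisfies
\[
V_\alpha(\widetilde{\sigma})=\frac{V_\alpha(\sigma_A)}{\sigma_A(A)^2}<\infty.
\]
Plugging $\widetilde{\sigma}$ into the definition of $C_\alpha(A)$ yields $C_\alpha(A)\ge V_\alpha(\widetilde{\sigma})^{-1}>0$, so $\dim_c A\ge \alpha$, and hence by Frostman $\dim_{\R^d} A\ge \alpha$.

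Since $A$ was an arbitrary Borel set with $\sigma(A)>0$, this gives $\underline{\dim}_{\R^d}\,\sigma\ge \alpha$; taking the supremum over admissible $\alpha$ yields \eqref{eq:characlower}. There is no real obstacle here: the only subtle point is to notice that restricting $\sigma$ to $A$ preserves finiteness of the Riesz energy (which uses nothing more than positivity of the kernel), and then to invoke the already-cited Frostman lemma to pass from capacity to Hausdorff dimension.
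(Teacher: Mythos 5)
Your proof is correct and follows essentially the same route as the paper: for each $\alpha$ with $V_\alpha(\sigma)<\infty$ and each $A$ with $\sigma(A)>0$, deduce $C_\alpha(A)>0$ and conclude via the Frostman identification $\dim_{\R^d}A=\dim_c A$. The only difference is cosmetic: you spell out the restriction-and-normalization step (using positivity of the Riesz kernel) that the paper leaves implicit when it asserts $C_\alpha(A)>0$.
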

\begin{proof}
 Denote by $D$ the right-hand side of \eqref{eq:characlower}. For $\eps\ll1$ let $\alpha=D-\eps$  which imples that $V_\alpha(\sigma)<\infty$. Then if $A\subset \R^d$ is such that $\sigma(A)>0$ we have $C_\alpha(A)>0$ and thus $\dim_{\R^d} \, A=\dim_c\, A\ge \alpha$. This proves that
 \[
  \underline{\dim}_{\R^d}\, \sigma \ge \alpha=D-\eps.
 \]
 Since $\eps$ is arbitray this proves $\underline{\dim}_{\R^d}\, \sigma \ge D$.

\end{proof}

\begin{remark}
	In general the inequality in \eqref{eq:characlower} is strict. Indeed, for every $\alpha\in (0,d)$ if $f\in L^1(\R^d)\backslash H^{-(d-\alpha)/2}(\R^d)$ (see \eqref{Hgamma} below and recall that $L^1(\R^d)\backslash H^{-\gamma}(\R^d)=\emptyset$ only if $\gamma>d/2$), then identifying $f$ with the measure $f dx$ we have $ \underline{\dim}_{\R^d}\, f=d$ but $V_\alpha(f)=\infty$ by  \eqref{VHgamma} below.
\end{remark}

In combination with Proposition \ref{prop:H-ssemigroup} an important ingredient  is the following counterpart of Lemma \ref{lemdim} in the periodic setting.
\begin{lemma}\label{lemdimper}
 Let $\sigma\in \M^+(Q)$ with $\sigma(Q)=1$. Then 
\begin{equation}\label{eq:characlowerper}
 \underline{\dim}_{Q}\, \sigma \ge \sup\{ \alpha \, : \, \|\sigma-1\|_{H^{-\frac{1}{2}(d-\alpha)}}<\infty\}.
\end{equation}
\end{lemma}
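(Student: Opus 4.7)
My plan is to deduce this lemma from its Euclidean counterpart, Lemma \ref{lemdim}, by a cutoff argument. Fix $\alpha \in (0,d)$ strictly smaller than the supremum on the right-hand side of \eqref{eq:characlowerper}, set $\gamma := (d-\alpha)/2 \in (0, d/2)$, and assume $\|\sigma - 1\|_{H^{-\gamma}} < \infty$. Pick a nonnegative $\psi \in C_c^\infty(\R^d)$ with $\psi \ge 1$ on $[0,1]^d$, view $\sigma$ as a locally finite $\Z^d$-periodic measure on $\R^d$, and set $\sigma_\psi := \psi\,\sigma$, a finite positive measure on $\R^d$. Since $\sigma_\psi \ge \sigma \restr [0,1)^d$ and Hausdorff dimension is a local invariant, the bound $\underline{\dim}_{\R^d}\, \sigma_\psi \ge \alpha$ would imply $\underline{\dim}_Q\, \sigma \ge \alpha$; in view of Lemma \ref{lemdim} the entire matter reduces to proving $V_\alpha(\sigma_\psi) < \infty$.

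By the standard Fourier identity $V_\alpha(\nu) = c_{\alpha,d}\int_{\R^d} |\xi|^{-2\gamma}|\what{\nu}(\xi)|^2\,d\xi$, valid for any finite compactly supported positive measure $\nu$ (the weight is locally integrable since $\gamma < d/2$), writing $\sigma_\psi = \psi\,dx + \psi(\sigma - 1)$ and using the rapid decay of $\what{\psi}$ reduces the claim to
\[
\int_{\R^d} |\xi|^{-2\gamma}\bigl|\what{\psi(\sigma-1)}(\xi)\bigr|^2\,d\xi \lesssim \|\sigma - 1\|_{H^{-\gamma}}^2.
\]
Expanding $\sigma - 1$ in Fourier series on $Q$, the Euclidean Fourier transform on the left is
\[
\what{\psi(\sigma-1)}(\xi) = \sum_{k \in \Z^d \setminus \{0\}} \what{\sigma}_k\,\what{\psi}(\xi - k),
\]
so the estimate is an almost-orthogonality statement for the shifts $\what{\psi}(\cdot - k)$.

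The key technical step, and the main obstacle, is this Fourier inequality. I would establish it via Schur's test applied to the kernel $K(\xi, k) := |\xi|^{-\gamma}\what{\psi}(\xi - k)$: the rapid decay $|\what{\psi}(\zeta)| \lesssim_N (1+|\zeta|)^{-N}$ localizes $\what{\psi}(\cdot - k)$ near $\xi = k$, where $|\xi|^{-2\gamma} \sim |k|^{-2\gamma}$ once $|k| \gtrsim 1$, while contributions from small $|k|$ and from $\xi$ near the origin are absorbed using $|\what{\sigma}_k| \le \sigma(Q) = 1$ together with the local integrability of $|\xi|^{-2\gamma}$. Granted this, $V_\alpha(\sigma_\psi) < \infty$ and Lemma \ref{lemdim} yields $\underline{\dim}_{\R^d}\, \sigma_\psi \ge \alpha$, hence $\underline{\dim}_Q\, \sigma \ge \alpha$; letting $\alpha$ tend to the supremum concludes the proof.
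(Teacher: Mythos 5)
Your argument is correct, and it reaches Lemma \ref{lemdim} by a genuinely different localization than the paper's. The paper truncates the \emph{kernel}: it sets $K=\chi\,\nper{\cdot}^{-\alpha}$ with $\chi$ supported in $B_{1/4}$, and the technical heart is the asymptotics \eqref{claimHgammared} of the Fourier coefficients $\what{K}_k$, obtained from the Riesz formula $\F(|\cdot|^{-\alpha}\chi)=C_{d,\alpha}\,\psi\ast|\cdot|^{-(d-\alpha)}$ plus a Taylor expansion; this yields the identity \eqref{claimHgamma} comparing the periodic $H^{-\gamma}$ norm with a localized Riesz energy, from which finiteness of $V_\alpha(\tilde\chi\sigma)$ for a cutoff $\tilde\chi$ supported in $B_{1/8}$ follows, and then Lemma \ref{lemdim} applies. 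You instead truncate the \emph{measure}, pass to the Euclidean Fourier representation \eqref{VHgamma} of $V_\alpha(\psi\sigma)$, and control $\int|\xi|^{-2\gamma}\big|\sum_{k\neq0}\what{\sigma}_k\what{\psi}(\xi-k)\big|^2d\xi$ by $\|\sigma-1\|_{H^{-\gamma}}^2$ through almost orthogonality of the shifted windows. Your flagged "main obstacle" does close: by Cauchy--Schwarz with weight $|\what\psi(\xi-k)|$ (Schur), it suffices to check $\int_{\R^d}|\xi|^{-2\gamma}|\what{\psi}(\xi-k)|\,d\xi\les|k|^{-2\gamma}$ for $k\neq0$, which follows by splitting into $|\xi-k|\le|k|/2$ (where $|\xi|\ges|k|$) and $|\xi-k|>|k|/2$ (where rapid decay of $\what\psi$ beats the locally integrable weight $|\xi|^{-2\gamma}$, since $2\gamma<d$); note all nonzero lattice points have $|k|\ge1$, so no separate small-$k$ regime is needed. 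What each route buys: yours is softer and more robust, using only rapid decay of $\what\psi$ and local integrability of the weight, and avoids the delicate coefficient asymptotics; the paper's kernel truncation is heavier but produces the explicit two-sided comparison \eqref{claimHgamma} between the periodic negative Sobolev norm and the localized Riesz energy, with a quantified error $|\eta_k|\les|k|^{-(d+1-\alpha)}$, which is sharper information than finiteness alone. The surrounding reductions (restriction to a fundamental domain versus to $B_{1/8}$ with translation invariance, monotonicity of $\underline{\dim}$ under minorization, local agreement of $\nper{\cdot}$ and $|\cdot|$) are equivalent in the two proofs.
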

\begin{proof}
 The proof will be obtained as a combination of Lemma \ref{lemdim} and a localization argument. Indeed,  for $\gamma\in \R$ and $\sigma\in \M^+(\R^d)$, we define the (homogeneous) $H^{-\gamma}$ norm by
 \begin{equation}\label{Hgamma}
  \|\sigma\|^2_{H^{-\gamma}(\R^d)}=\int_{\R^d}  |\xi|^{-2\gamma} |\F(\sigma)(\xi)|^2 d\xi,
 \end{equation}
 where $\F$ is the Fourier transform. We then have for some constant $C_{d,\alpha}>0$, see \cite[Theorem 5.9]{LiebLoss},
 \begin{equation}\label{VHgamma}
  V_{\alpha}(\sigma)= C_{d,\alpha}\|\sigma\|^2_{H^{-\frac{1}{2}(d-\alpha)}(\R^d)}.
 \end{equation}
In particular by Lemma \ref{lemdim} the counterpart in $\R^d$ of \eqref{eq:characlowerper} holds true. We first claim that there exists $\chi\in C^\infty_c(B_{1/4})$  with $\chi \ges 1$ on $B_{1/8}$ and a family of complex coefficients $(\eta_k)_{k\in \Z^d}$ with $|\eta_k|\les |k|^{-(d+1-\alpha)}$ such that for every $\sigma\in \M^+(Q)$ with $\sigma(Q)=1$,
\begin{equation}\label{claimHgamma}
 \|\sigma-1\|^2_{H^{-\frac{1}{2}(d-\alpha)}}= \int_{Q\times Q} \frac{(d \sigma_x-1) (d\sigma_y-1)}{\nper{x-y}^\alpha} \chi(x-y) +\sum_{k\in \Z^d} \eta_k |\what{\sigma}_k|^2. 
\end{equation}
Without loss of generality we may assume that $\sigma\in C^\infty(Q)$. Fix first $\chi\in C^\infty_c(B_{\frac{1}{4}})$  with $\chi = 1$ on $B_{1/8}$. Let 
\[
 K(x)=\frac{\chi(x)}{\nper{x}^\alpha}\in L^1(Q).
\]
By Parseval (in $Q$) there exists a constant $C>0$ such that 
\[
 \int_{Q\times Q} \frac{(d \sigma_x-1) (d\sigma_y-1)}{\nper{x-y}^\alpha} \chi(x-y)=C \sum_{k\in \Z^d} \what{K}_k |\what{\sigma}_k|^2. 
\]
By \eqref{def:Hgamma}, in order to prove \eqref{claimHgamma}, it is therefore enough to show that there exists a constant $C_{d,\alpha}>0$ such that
\begin{equation}\label{claimHgammared}
 \lt|\what{K}_k- C_{d,\alpha} |k|^{-(d-\alpha)}\rt|\les |k|^{-(d-\alpha+1)}.
\end{equation}
Since in $B_{1/4}$, we have $\nper{x}=|x|$ we find
\[
 \what{K}_k=\int_{Q}\frac{\chi(x)}{|x|^\alpha} \exp(-i k\cdot x) dx=\int_{\R^d}\frac{\chi(x)}{|x|^\alpha} \exp(-i k\cdot x) dx=\F(\tilde{K})(k) 
\]
where for $x\in \R^d$,
\[
 \tilde{K}(x)=\frac{\chi(x)}{|x|^\alpha}.
\]
Denoting $\psi=\F(\chi)$ (which is a Schwartz function) we have by \cite[Theorem 5.9]{LiebLoss}
\[
 \F(\tilde{K})(k)= C_{d,\alpha}\int_{\R^d} \frac{\psi(\xi)}{|k-\xi|^{d-\alpha}} d\xi.
\]
We claim that 
\begin{equation}\label{claim:fourier}
 \lt|\int_{\R^d} \frac{\psi(\xi)}{|k-\xi|^{d-\alpha}} d\xi - |k|^{-(d-\alpha)} \int_{\R^d} \psi\rt|\les |k|^{-(d-\alpha+1)}.
\end{equation}
Since $\int_{\R^d} \psi =\chi(0)=1$, this would conclude the proof of \eqref{claimHgammared}. On the one hand, since $\psi$ is a Schwartz function, for every $\beta\ge 1$
\[
 \int_{B_{\frac{|k|}{2}}^c} \lt(\frac{1}{|k-\xi|^{d-\alpha}} + |k|^{-(d-\alpha)}\rt) |\psi(\xi)| d\xi\les_\beta |k|^{-\beta}.
\]

On the other hand in $B_{|k|/2}$, by Taylor expansion,
\[
 \lt||k-\xi|^{-(d-\alpha)}-|k|^{-(d-\alpha)}\rt|\les |k|^{-(d-\alpha +1)}|\xi|
\]
so that \eqref{claim:fourier} follows. \\

We now prove \eqref{eq:characlowerper}. Consider first  $\alpha$ such that $\|\sigma-1\|_{H^{-\frac{1}{2}(d-\alpha)}}<\infty$. We want to prove that
\begin{equation}\label{eq:claimcharper}
\underline{\dim}_{Q}\, \sigma \ge \alpha. 
\end{equation}
 By translational invariance it is enough to prove that
\[
 \underline{\dim}_{Q}\, \sigma\restr B_{\frac{1}{8}}\ge \alpha.
\]
By \eqref{claimHgamma}, $|\eta_k|\les |k|^{-(d+1-\alpha)}$, $K\in L^{1}(Q)$ and the fact that in $B_{1/4}$ we have $\nper{x}=|x|$ we have
\[
 \int_{Q\times Q} \frac{d \sigma_x d\sigma_y}{|x-y|^\alpha} \chi(x-y)<\infty.
\]
This implies that for every cut-off function $\tilde{\chi}$ supported in $B_{1/8}$ with $0\le \tilde{\chi}\le 1$,
\[
 V_\alpha(\tilde{\chi} \sigma)=\int_{\R^d\times\R^d} \frac{\tilde{\chi}(x) d\sigma_x \tilde{\chi}(y) d\sigma_y}{|x-y|^\alpha}<\infty.
\]
By \eqref{eq:characlower} this concludes the proof of \eqref{eq:claimcharper}.

\end{proof}
% 
% \textcolor{blue}{D}\textcolor{red}{A}\textcolor{yellow}{L}\textcolor{green}{I}\textcolor{pink}{A}
We now prove a characterization of the upper dimensions.
\begin{lemma}\label{charaup}
 Let $\sigma\in \M^+(\X)$ with $0<\sigma(\X)<+\infty$. Then 
 \begin{equation}\label{eq:characupper}
  \dimuX \sigma =\sup\{ \alpha \, : \textrm{ there exists } 0\neq\tilde{\sigma}\le \sigma \textrm{ such that } \tilde{\sigma}(B_r(x))\le r^\alpha \textrm{ for all } x\in \X, r\le 1\}.
 \end{equation}

\end{lemma}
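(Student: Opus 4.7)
The plan is to establish the two inequalities between $\dimuX \sigma$ and the supremum $D$ on the right-hand side of \eqref{eq:characupper} separately. For the easy direction $\dimuX \sigma \ge D$, given a non-zero $\tilde \sigma \le \sigma$ satisfying $\tilde\sigma(B_r(x)) \le r^\alpha$ for every $x \in \X$ and $r \le 1$, covering any Borel $A$ by countably many balls $B_{r_i}(x_i)$ with $r_i$ small and summing the Frostman estimates gives $\tilde\sigma(A) \le \sum_i r_i^\alpha$, so $\tilde\sigma(A) \les \H^\alpha(A)$; in particular $\tilde\sigma(A) > 0$ forces $\dimX A \ge \alpha$. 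Applied to any $A$ with $\sigma(\X \setminus A) = 0$, this yields $\tilde\sigma(A) = \tilde\sigma(\X) > 0$ and hence $\dimX A \ge \alpha$, so by the definition of the upper dimension $\dimuX \sigma \ge \alpha$.

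For the reverse inequality, fix $\alpha < \dimuX \sigma$ and aim to produce a non-zero $\tilde\sigma \le \sigma$ satisfying the Frostman bound with exponent $\alpha$. Introduce the upper-density set
\[
F := \Big\{x \in \X \,:\, \limsup_{r \to 0^+} \frac{\sigma(B_r(x))}{r^\alpha} = \infty\Big\}.
\]
The core step is showing $\sigma(\X \setminus F) > 0$. If, instead, $\sigma$ were concentrated on $F$, a Vitali covering argument would force $\H^\alpha(F) = 0$: for every $M, \delta > 0$ and every $x \in F$ pick $r_x \le \delta$ with $\sigma(B_{r_x}(x)) > M r_x^\alpha$, extract a disjoint Vitali subfamily $\{B_{r_{x_i}}(x_i)\}$ whose fivefold enlargements cover $F$, and bound the $5\delta$-Hausdorff premeasure by $\H^\alpha_{5\delta}(F) \le 5^\alpha \sum_i r_{x_i}^\alpha \le 5^\alpha \sigma(\X)/M$. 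Letting $\delta \to 0$ and then $M \to \infty$ gives $\H^\alpha(F) = 0$, hence $\dimX F \le \alpha$ and $\dimuX \sigma \le \alpha$, contradicting the choice of $\alpha$.

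Once $\sigma(\X \setminus F) > 0$ is known, stratify $\X \setminus F = \bigcup_n A_n$ with $A_n := \{x : \sigma(B_r(x)) \le n r^\alpha \text{ for every } r \in (0, 1/n]\}$, pick $n_0$ with $\sigma(A_{n_0}) > 0$, and set $\tilde\sigma := \sigma \restr A_{n_0}$. For $x \in A_{n_0}$ the bound $\tilde\sigma(B_r(x)) \le C r^\alpha$ then holds at every $r \le 1$, with $C := \max(n_0, \sigma(\X) n_0^\alpha)$ handling the range $r > 1/n_0$ via the trivial total-mass estimate. The standard doubling trick extends this to arbitrary centers $y \in \X$: either $B_r(y) \cap A_{n_0} = \emptyset$ and $\tilde\sigma(B_r(y)) = 0$, or some $x \in A_{n_0} \cap B_r(y)$ yields $B_r(y) \subseteq B_{2r}(x)$ and hence $\tilde\sigma(B_r(y)) \le C(2r)^\alpha$. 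Rescaling $\tilde\sigma$ by the resulting constant normalizes the Frostman bound to $1$, which shows $D \ge \alpha$; letting $\alpha \nearrow \dimuX \sigma$ concludes.

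The main obstacle is the Vitali-type density argument showing that if $\sigma$ concentrates on $F$ then $\H^\alpha(F) = 0$; this is the one place where a genuine covering lemma is used. Everything else, namely Borel regularity of the sets involved, the stratification of $\X \setminus F$ into superlevel sets of the density, and the local-to-global extension of the Frostman condition, is essentially bookkeeping.
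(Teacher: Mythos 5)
Your proof is correct and follows essentially the same route as the paper: both directions rest on comparing $\sigma$-mass of balls with $\H^\alpha$ (upper-density set of small dimension, positivity of the complement, stratification into sets with a uniform Frostman bound, restriction, doubling trick and rescaling). The only difference is that you prove the two density--Hausdorff comparisons by hand (a covering argument for the easy inequality and the $5r$-Vitali lemma for the set $\{\limsup_{r\to0}\sigma(B_r(x))/r^\alpha=\infty\}$), whereas the paper simply invokes \cite[Theorem 2.56]{AFP} at both points; this is a presentational, not a structural, difference.
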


\begin{proof}
As above let $D$ be the right-hand side of \eqref{eq:characupper}. For $\eps\ll1$ set $\alpha=\dimuX \sigma-\eps$. For every $\delta>0$ let 
 \[
  A_\delta=\{x \in \X \ : \ \limsup_{r\to 0}\frac{\sigma(B_r(x))}{r^\alpha}\ge  \delta\}.
 \]
By \cite[Theorem 2.56]{AFP} we have $\H^\alpha(A_\delta)\les \delta^{-1} \sigma(\X)$ so that 
\[
A=\cup_{\delta} A_\delta=\{ x\in \X \, : \, \limsup_{r\to 0} \frac{\sigma(B_r(x))}{r^\alpha}>0\} 
\]
 satisfies $\dimX A\le \alpha$. Since $\alpha <\dimuX \sigma$, we must have $\sigma(\X \backslash A)>0$. Therefore, there exists $\delta>0$ such that setting $F_\delta=\X\backslash A_\delta$ we have  $\sigma(F_\delta)>0$. Defining for  $M\ge 1$ the sets $F^M$ as 
 \[
  F^M=\{ x\in \X \, : \, \sigma(B_r(x))\le  M r^\alpha, \textrm{ for every } r\le 1\}
 \]
we have 
 \[
  F_\delta=\lt\{ x\in \X \, : \, \limsup_{r\to 0}\frac{\sigma(B_r(x))}{r^\alpha}\le  \delta\rt\}
  \subset \cup_{M\ge 1}  F^M.
 \]
Thus there exists $M\ge 1$ such that $\sigma(F^M)>0$. Using triangle inequality it is then not hard to see that $\tilde{\sigma}=\sigma\restr F^M\le \sigma$ satisfies 
\[
 \tilde{\sigma}(B_r(x))\les M r^\alpha \qquad \textrm{for all } x\in \X, \, r\le 1.
\]
Dividing if necessary $\tilde{\sigma}$ by a large constant this proves that $D\ge \alpha=\dimuX \sigma-\eps$. Since $\eps$ was arbitrary we get 
\[
 D\ge \dimuX \sigma.
\]
To prove the opposite inequality let $\alpha= D-\eps$. By definition of $D$, there exists $0\neq \tilde{\sigma}\le  \sigma$ such that 
\[
 \tilde{\sigma}(B_r(x))\le r^\alpha \qquad \textrm{for all } x\in \X, \, r\le 1.
\]
Since $\tilde{\sigma}\le \sigma$, if $A\subset \X$ is such that $\sigma(\X\backslash A)=0$ we also have $\tilde{\sigma}(\X\backslash A)=0$ and so $\tilde{\sigma}(A)>0$. By \cite[Theorem 2.56]{AFP} we have 
\[
 \tilde{\sigma}\les \H^\alpha\restr A
\]
and therefore $\H^\alpha(A)>0$. This implies that $\dimX A\ge \alpha$. Taking the infimum over all such sets $A$ we find $\dimuX \sigma\ge \alpha=D-\eps$. As before since $\eps$ is arbitrary we have 
\[
 \dimuX \sigma\ge D,
\]
which concludes the proof of \eqref{eq:characupper}.

\end{proof}
\begin{remark}
 Let us point out that when $\X=\R^d$, arguing as in the beginning of \cite[Chapter 8]{M}, we have similarly to \eqref{eq:characlower}
 \[
  \overline{\dim}_{\R^d} \, \sigma=\sup \lt\{ \alpha \, : \, \textrm{ there exists } 0\neq\tilde{\sigma}\ll \sigma \textrm{ such that } V_\alpha(\tilde{\sigma})<\infty\rt\}. 
 \]

\end{remark}

\section{Main functional}\label{sec:Main}
In this section we fix $d=2$. For $\lambda, T>0$, recalling the definition \eqref{Imu} (see also \eqref{reprI}) of $I$ we define the energy
\begin{equation}\label{def:ener}
 \E_{\lambda,T}(\mu)=I(\mu)+ \lambda \|\mu_{\pm T}-1\|_{H^{-\frac{1}{2}}}^2.
\end{equation}
We use here the short-hand notation
\[
 \|\mu_{\pm T}-1\|_{H^{-\frac{1}{2}}}^2=\|\mu_{- T}-1\|_{H^{-\frac{1}{2}}}^2+\|\mu_{ T}-1\|_{H^{-\frac{1}{2}}}^2.
\]
Let us notice that we implicitely used  the normalization $\mu_{\pm T} (Q)=1$. However, by scaling this comes with no loss of generality. Let us also notice that arguing as for \eqref{limitProb} we can prove existence of minimizers for $\E_{\lambda,T}$. Moreover, by symmetrization we may assume that  $\mu$ is symmetric around $t=0$ i.e. $\mu_{t}=\mu_{-t}$ for $t\in [0,T]$.
\subsection{Heuristics}\label{sec:heur}
In this section we give a boiled down version of the constructions from \cite{ChokKoOt,CoOtSer}. The main purpose is to give a  heuristic motivation of the conjecture from \cite{privateCOS} regarding $\dim \mu_{ T}$.
\begin{proposition}\label{prop:scaling}
  For every $\lambda>0$ and $ T\ge 1$, we have
  \begin{equation}\label{sclaing1}
   \min_{\mu} \E_{\lambda,T}(\mu)\les  T.
  \end{equation}
  If $0<T\le 1$ and $\lambda \gg T^2$,
\begin{equation}\label{sclaing2}
   \min_{\mu} \E_{\lambda,T}(\mu)\les \min\lt(T^{\frac{1}{3}},\lambda^{\frac{2}{7}} T^{\frac{3}{7}}\rt).
  \end{equation}
  Finally if $T\le 1$ but $\lambda\les T^2$,
  \begin{equation}\label{sclaing3}
   \min_{\mu} \E_{\lambda,T}(\mu)\les \lambda^{\frac{2}{3}} T^{-\frac{1}{3}}.
  \end{equation}

\end{proposition}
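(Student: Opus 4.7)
The plan is to construct explicit competitors in each of the three regimes and to optimize their parameters. In each case the competitor is a self-similar branching tree, built by iterating Lemma \ref{lem:consblock} over many length scales.

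For the regime $T\ge 1$ (bound \eqref{sclaing1}), I would split the slab into a bulk region $\{|t|<T-1\}$ and two boundary layers $\{T-1<|t|<T\}$. In the bulk, a single straight trajectory carrying mass $1$ contributes perimeter of order $T-1$. In each boundary layer, of unit thickness in a box of side $r=1$ and total mass $\Phi=1$, a self-similar branching opens the unit Dirac into the uniform measure, so that $\mu_{\pm T}=1$ and the $H^{-1/2}$ penalty vanishes. Iterating Lemma \ref{lem:consblock} on sub-boxes of side $2^{-k}$ and mass $4^{-k}$ with time increments $h_k\sim 2^{-3k/2}$ gives a convergent geometric sum of contributions, each boundary layer costing $O(1)$. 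Altogether $\E_{\lambda,T}(\mu)\les T$.

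For the regime $T\le 1$ with $\lambda\gg T^2$ (bound \eqref{sclaing2}), I would tile $Q$ with $L^{-2}$ sub-boxes of side $L$ and mass $\Phi=L^2$, and inside each sub-box build a self-similar tree from a central Dirac at $t=0$ to a (possibly truncated) grid of sub-Diracs at $t=\pm T$. The per-level cost of the tree is $2^k L\,h_k + L^4\,4^{-k}/h_k$ per sub-box; summing over levels and multiplying by $L^{-2}$ gives an interior cost whose dependence on $L$ and on the truncation level $N$ is made explicit by a Lagrange-multiplier computation subject to $\sum_k h_k=T$. The $T^{1/3}$ bound is obtained with $L=T^{2/3}$ and $N\to\infty$, so that the unconstrained optimum $h_k\sim L^{3/2}\,2^{-3k/2}$ fits the available time exactly and $\mu_{\pm T}$ is Lebesgue. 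The cross-over bound $\lambda^{2/7}T^{3/7}$ corresponds to choosing $L>T^{2/3}$ together with a finite truncation level $N$: the boundary trace is then a grid of atoms at scale $L/2^N$ with $\|\mu_{\pm T}-1\|_{H^{-1/2}}^2\sim L/2^N$, and the simultaneous optimization of $L$ and $N$, balancing the time-constrained interior cost against the truncation penalty $\lambda L/2^N$, yields the claimed exponents.

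For the regime $T\le 1$ with $\lambda\les T^2$ (bound \eqref{sclaing3}), the boundary penalty is so weak that a much coarser construction suffices. Using a tree with a very small truncation level and essentially straight-line transport between a coarse grid of Diracs, up to a thin multi-scale boundary layer making the trace sufficiently diffuse, and optimizing the resulting two-term expression in $L$ and $N$ gives the $\lambda^{2/3}T^{-1/3}$ scaling.

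The main technical obstacle is the cross-over bound $\lambda^{2/7}T^{3/7}$: a naive one-parameter optimization recovers only $T^{1/3}$ or $\sqrt{\lambda T}$, and it is the interaction between the time-constrained and unconstrained regimes of the Lagrange-multiplier equations, combined with the truncation cost at the boundary, that produces the exponents $2/7$ and $3/7$. In practice these computations are straightforward once Lemma \ref{lem:consblock} is in place and closely follow the constructions of \cite{ChokKoOt,CoOtSer}.
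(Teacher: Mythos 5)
Your constructions for \eqref{sclaing1} and for the $T^{1/3}$ part of \eqref{sclaing2} are fine (although for $T\ge 1$ a single application of Lemma \ref{lem:consblock} with $r=\Phi=1$ and Lebesgue data on both sides already gives $\E_{\lambda,T}\les T+1/T\les T$; there is no need to iterate the lemma by hand). The genuine gap is the cross-over bound $\lambda^{2/7}T^{3/7}$, which is precisely the point you identify as the main obstacle but then only assert. A first, secondary issue: in $d=2$ a Dirac mass is not in $H^{-\frac{1}{2}}$, so a trace which is literally ``a grid of atoms at scale $L/2^N$'' has $\|\mu_{\pm T}-1\|_{H^{-\frac{1}{2}}}=\infty$; to make the penalty finite you must spread each atom over a small square, and the size of that square is a parameter your family does not contain. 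The deeper problem is structural: in your competitor the mass of each $L$-box (one Dirac of mass $L^2$ at $t=0$) is spread at $t=\pm T$ over the \emph{whole} box, uniformly down to the scale $\ell=L2^{-N}$. Consequently, independently of $N$ and of the time steps $h_k$, the kinetic energy is at least $W_{\per}^2(\mu_0,\mu_T)/T\sim L^2/T$ by \eqref{Ecinlowerintern} (a fixed fraction of the mass must travel a distance of order $L$), and the perimeter is at least $T/L$ (each $L$-box carries mass $L^2$ at every time, so $\dot{P}\ge L^{-2}\cdot L=1/L$). Hence every member of your family has interior cost $\ges \max(T/L,L^2/T)\ges T^{1/3}$, while in the regime $T^2\ll\lambda\ll T^{-1/3}$ one has $\lambda^{2/7}T^{3/7}\ll T^{1/3}$: no Lagrange-multiplier analysis of the $h_k$ can make this construction beat the uniform bound.

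The idea you are missing, which is how the paper (following \cite{ChokKoOt,CoOtSer}) proceeds, is that in the cross-over regime the boundary trace should stay \emph{concentrated} rather than uniform at the cell scale. One takes $N^2$ cells of side $1/N$ and connects, via Lemma \ref{lem:consblock}, the Dirac of mass $N^{-2}$ in each cell only to the uniform measure on a small cube of side $r\ll 1/N$ around the center; the kinetic cost is then $r^2/T$ instead of $N^{-2}/T$, and the price of not being uniform at scale $1/N$ is the boundary term $\lambda\|\mu_T-1\|_{H^{-\frac{1}{2}}}^2\les \lambda/(rN^2)$, the key estimate of \cite[Lemma 4.4]{CoOtSer} (see also \cite[Proposition A.2]{ChokKoOt}) for which your proposal has no substitute. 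Optimizing the three-term expression \eqref{eq:simplifiedscaling} in $r$ and then $N$ gives $N\sim\lambda^{2/7}T^{-4/7}$, $r\sim\lambda^{1/7}T^{5/7}$ and the bound $\lambda^{2/7}T^{3/7}$, the constraints $N\ge 1$ and $r\ll N^{-1}$ accounting for the hypothesis $\lambda\gg T^2$ and for the comparison with $T^{1/3}$; the regime \eqref{sclaing3} is the same construction with $N=1$ and $r=(\lambda T)^{1/3}$, so your separate multi-scale boundary-layer sketch for that case is both vaguer and unnecessary, and it again lacks the concentration parameter that makes the penalty--kinetic trade-off work.
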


\begin{proof}
 By symmetry we make the construction in $(0,T)$. If $T\ge 1$ we can apply \eqref{eq:estimIbuildingblock} in $Q^r=Q$ with $\Phi=1$ and $\bar\mu_{\pm}$ the Lebesgue measure so that 
 \[
  \E_{\lambda,T}(\mu)\les T + \frac{1}{T}\les T.
 \]
 This proves \eqref{sclaing1}.
We may thus assume that $T\le 1$.  Let $N\in \N$ and $ N^{-1}\ge r>0$ to be chosen below. We divide $Q$ in $N^2$ cubes $\overline{Q}_i$ of sidelenght $1/N$ and center $X_i$. We let $Q_i= X_i +(-r/2,r/2)^2$.  We then use \eqref{eq:estimIbuildingblock}  (with $\Phi= N^{-2}$) inside each $Q_i$ to connect 
\[
 \mu_0=\sum_i \frac{1}{N^2} \delta_{X_i} \qquad \textrm{ to } \qquad \mu_T= \sum_i \frac{1}{N^2 r^2} \chi_{Q_i}
\]
at a cost 
\[
 I(\mu)\les  NT + \frac{r^2}{T}.
\]
When $r=N^{-1}$ we have $\mu_T=1$ so that 
\begin{equation}\label{eq:scalingunif}
 \min_\mu \E(\mu)\les \min_{N\ge 1} NT + \frac{1}{T N^2}= T^{\frac{1}{3}}.
\end{equation}
For $r \ll N^{-1}$ instead, by \cite[Lemma 4.4]{CoOtSer} (see also \cite[Proposition A.2]{ChokKoOt})
\[
 \|\mu_T-1\|_{H^{-\frac{1}{2}}}^2\les \frac{1}{r N^2}
\]
so that 
\begin{equation}\label{eq:simplifiedscaling}
 \min_\mu \E_{\lambda,T} (\mu)\les \min_{N\ge 1, r\ll N^{-1}} \lt(NT + \frac{r^2}{T}+ \lambda \frac{1}{r N^2}\rt).
\end{equation}
If we set aside the constraints $N\ge 1$ and  $r\ll N^{-1}$, we can minimize the right-hand side by minimizing for instance first in $r$ and then in $N$. This yields
\begin{equation}\label{eq:parameters}
 N\sim \lambda^{\frac{2}{7}} T^{-\frac{4}{7}}, \qquad  r\sim\lambda^{\frac{1}{7}}T^{\frac{5}{7}} \qquad \textrm{and } \qquad \E_{\lambda,T}(\mu)\les \lambda^{\frac{2}{7}}T^{\frac{3}{7}}.
\end{equation}
The condition $N\ge 1$ is satisfied provided 
\begin{equation}\label{condlambda}
 \lambda\gg T^2.
\end{equation}
The condition $r\ll N^{-1}$ is satisfied provided 
\[
 \lambda\ll T^{-\frac{1}{3}}.
\]
If $\lambda\ges T^{-1/3}$ we see that $T^{1/3}\les \lambda^{2/7}T^{3/7}$ so that provided $T\le 1$ with $\lambda\gg T^2$, we have 
\[
 \min_\mu \E_{\lambda,T} (\mu)\les \min\lt(T^{\frac{1}{3}},\lambda^{\frac{2}{7}} T^{\frac{3}{7}}\rt).
\]
This proves \eqref{sclaing2}. If instead $\lambda \les T^2$, we choose $N=1$ in \eqref{eq:simplifiedscaling} and then $r= (\lambda T)^{1/3}\ll1$ which gives an energy of the order of 
\[
 \min_\mu \E_{\lambda,T} (\mu)\les \lambda^{\frac{2}{3}} T^{-\frac{1}{3}}.
\]
This concludes the proof of \eqref{sclaing3}.
\end{proof}
\begin{remark}
 Let us point out that the most interesting regime is \eqref{sclaing2}. Indeed, \eqref{sclaing1} corresponds on the one hand  to samples which are too thick so that the details of the branching structure are not relevant. On the other hand, \eqref{sclaing3} corresponds to samples which are not large enough so that the artificial boundary conditions interfere with the geometry of the microstructure. 
\end{remark}
\begin{remark}
 Arguing as in \cite{ChokConKoOt,CoOtSer} it is possible to obtain matching lower bounds. These can also be directly derived from the interpolation estimate \cite[Proposition 1.4]{CintiOt}. See also \cite[Proposition 5.3]{CGOS} for a simple proof in the case $\lambda=\infty$. 
%  \textcolor{red}{Notice that if we could find a bound of the form
%  \[
%   \min_{\mu}  \frac{1}{T}W^2(\sum_i \phi_i \delta_{X_i},\mu)+\lambda \|\mu-1\|_{H^{-1/2}}^2\ges \lambda^\alpha T^\beta \sum_i \phi_i^\gamma
%  \]
% for the correct values of $\alpha,\beta,\gamma$, we could have
% \[
%  \min \E\ges \min_{\sum_i\phi_i=1} T \sum_i \phi_i^{1/2} +\lambda^\alpha T^\beta \sum_i \phi_i^\gamma
% \]
% which by Young would give the proof.}
\end{remark}

\begin{remark}\label{rem:conjecture}
While the constructions used in the proof of Proposition \ref{prop:scaling} are certainly not minimizers of $\E_{\lambda,T}$, it is somewhat reasonable to expect that  minimizers should have a similar structure. In particular, if we assume  that minimizers are self-similar, we should be able to compute  $\dim\, \mu_T$ by looking at these constructions. Recall that  $\dim\, A=\alpha$ if for $r$ small we can cover $A$ by $n$ balls of radius $r$ in such a way that $nr^\alpha\sim 1$. If we look at the non-uniform branching construction from Proposition \ref{prop:scaling}, we see from \eqref{eq:parameters} that (setting $\lambda=1$ for simplicity)
\[
 n=N^2=T^{-\frac{8}{7}} \quad \textrm{and } \quad r=T^{\frac{5}{7}} \quad \textrm{so that } \quad  n r^{\frac{8}{5}}\sim 1.
\]
This explains the conjecture from \cite{privateCOS} that $\dim\, \mu_T=8/5$. Notice moreover that since $\eps^{3/7}\ll \eps^{1/3}$ for $\eps\ll1$, we actually expect that for every fixed value of $\lambda$ and $T$ the energy scaling in the slabs $(T-\eps,T)\times Q$ should be of the order of $\eps^{3/7}$. We thus believe that close to the boundary the microstructure resembles the non-uniform branching construction and  so  $\dim_{Q}\, \mu_T=8/5$ even if we are in the global energy regime $T^{1/3}\ll \lambda^{2/7} T^{3/7}$.   
\end{remark}
\subsection{ From local energy bounds to dimensional estimates}
In this section we prove Theorem \ref{thm:main1}.
We first prove the lower bound. As already explained, thanks to Lemma \ref{lemdimper}  it reduces  to a regularity result for $\mu_T$.
\begin{theorem}\label{thm:lower}
Let $\mu$ be a symmetric minimizer of $\E_{\lambda,T}$ and assume that there exists $\beta\in(0,1)$ such that $\limsup_{\eps\to 0} \eps^{-\beta} I(\mu,\eps)<\infty$.  Then
\[
\|\mu_T-1\|_{H^{-\frac{1}{2}(2-\alpha)}}<\infty \qquad\text{ for all }\quad \alpha <f(\beta)=\frac{1+3\beta}{1+\beta}.
\]
\end{theorem}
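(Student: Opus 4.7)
\medskip

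\noindent\textbf{Proof plan for Theorem \ref{thm:lower}.}
The natural target, in view of Lemma \ref{lemdimper} and $d=2$, is to show that $\|\mu_T-1\|_{H^{-\gamma}}<\infty$ for every $\gamma>\frac{1-\beta}{2(1+\beta)}$, since this is equivalent to $\alpha<f(\beta)$ with $\gamma=(2-\alpha)/2$. The strategy is to use minimality of $\mu$ against a one-parameter family of competitors in which the boundary traces $\mu_{\pm T}$ are replaced by a two-point symmetric average at scale $\eta$; then Proposition \ref{prop:H-ssemigroup} (in its equivalent dyadic form) converts the resulting one-scale Fourier estimate into the desired $H^{-\gamma}$ bound.

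\smallskip

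\noindent\emph{Step 1 (Competitor).} Fix a unit direction $e\in\R^2$, scales $0<\eta\ll \eps\ll T$ (with $\eps$ to be optimized in terms of $\eta$), and set $s(t)=(t-(T-\eps))/\eps$ for $t\in(T-\eps,T)$. Define $\tilde\mu$ to agree with $\mu$ on $[-T+\eps,T-\eps]$ and, in the slab $(T-\eps,T)$, replace every atom $\phi_i\delta_{X_i(t)}$ of $\mu_t$ by
\[
\frac{\phi_i}{2}\delta_{X_i(t)+s(t)\eta e}+\frac{\phi_i}{2}\delta_{X_i(t)-s(t)\eta e},
\]
extending symmetrically to $(-T,-T+\eps)$. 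Since $s(T-\eps)=0$, the splitting is trivial at the inner boundary and $\tilde\mu\in\calA_T^\ast$. A direct computation using the representation \eqref{reprI} gives, inside the slab, $\dot P(\tilde\mu,t)=\sqrt{2}\,\dot P(\mu,t)$ (from $\phi\mapsto\phi^{1/2}$) and $\dot \Ecin(\tilde\mu,t)=\dot\Ecin(\mu,t)+\eta^2/\eps^2$ (the split velocities are $\dot X_i\pm\eta e/\eps$ and the cross term averages out). Hence
\[
I(\tilde\mu)-I(\mu)\les I(\mu,\eps)+\frac{\eta^2}{\eps}\les \eps^\beta+\frac{\eta^2}{\eps}.
\]

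\smallskip

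\noindent\emph{Step 2 (Minimality $\Rightarrow$ Fourier estimate).} By construction $\tilde\mu_{\pm T}(\cdot)=\tfrac12\bigl(\mu_{\pm T}(\cdot-\eta e)+\mu_{\pm T}(\cdot+\eta e)\bigr)$, so $\widehat{\tilde\mu_{\pm T}}(k)=\cos(k\cdot\eta e)\widehat{\mu_{\pm T}}(k)$ for $k\neq0$. Using the symmetry $\mu_T=\mu_{-T}$, minimality of $\mu$ in $\E_{\lambda,T}$, and Parseval we obtain
\[
\sum_{k\neq0}|k|^{-1}\sin^2(k\cdot\eta e)\,|\widehat{\mu_T-1}(k)|^2 \;=\;\|\mu_T-1\|_{H^{-1/2}}^2-\|\tilde\mu_T-1\|_{H^{-1/2}}^2\;\les\;\eps^\beta+\frac{\eta^2}{\eps}.
\]
Optimizing $\eps=\eta^{2/(1+\beta)}$ gives the one-scale estimate
\begin{equation}\label{eq:onescalestar}
\sum_{k\neq0}|k|^{-1}\sin^2(k\cdot\eta e)\,|\widehat{\mu_T-1}(k)|^2 \;\les\;\eta^{2\beta/(1+\beta)}\qquad\text{for all small }\eta>0.
\end{equation}

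\smallskip

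\noindent\emph{Step 3 (Dyadic sum).} To pass from \eqref{eq:onescalestar} to the desired $H^{-\gamma}$ bound, integrate in $\eta$ over the dyadic window $[\eta_0,2\eta_0]$: for each $k$ with $|k\cdot e|\sim 1/\eta_0$, the average $\int_{\eta_0}^{2\eta_0}\sin^2(k\cdot\eta e)\,d\eta/\eta$ is bounded below by an absolute constant. Carrying this out for the two coordinate directions $e=e_1,e_2$ (any frequency with $|k|\sim1/\eta_0$ has $|k\cdot e_j|\gtrsim|k|$ for some $j$) yields the dyadic bound
\[
\sum_{|k|\sim 1/\eta_0} |k|^{-1}|\widehat{\mu_T-1}(k)|^2\;\les\;\eta_0^{2\beta/(1+\beta)}.
\]
Summing over $\eta_0=2^{-j}$,
\[
\|\mu_T-1\|_{H^{-\gamma}}^2\;=\;\sum_{k\neq0}|k|^{-2\gamma}|\widehat{\mu_T-1}(k)|^2\;\les\;\sum_{j\geq 0}2^{j(1-2\gamma)}\cdot 2^{-j\,2\beta/(1+\beta)}\;=\;\sum_{j\geq 0}2^{j\bigl[1-2\gamma-2\beta/(1+\beta)\bigr]},
\]
which converges exactly when $\gamma>\frac{1-\beta}{2(1+\beta)}$, i.e.\ when $\alpha<f(\beta)$. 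This is precisely the statement, and via Remark \ref{rem:semigroup} it is nothing but Proposition \ref{prop:H-ssemigroup} applied to the semigroup generated by our two-point translation mollifier.

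\smallskip

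\noindent\emph{Main obstacle.} The cleanest cost bound comes from the two-point average, whose Fourier symbol $\cos(k\cdot\eta e)$ has zeros at resonant frequencies, so the per-scale lower bound on $\sin^2(k\cdot\eta e)$ required in Step 3 must be obtained only after averaging in $\eta$ and in the direction $e$. The technical heart is thus verifying that this averaging produces a genuine dyadic spectral cutoff and matches the abstract semigroup formula of Proposition \ref{prop:H-ssemigroup}; an equivalent but more elaborate route is to use a true spatial mollifier $\rho_\eta$ in the construction and to bound the extra perimeter cost by a careful discretization of $\rho_\eta$.
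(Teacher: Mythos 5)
Your proposal is correct and follows essentially the same route as the paper: the identical two-copy shear competitor in the slab $(T-\eps,T)$, the same cost bound $\eps^{\beta}+\eta^{2}/\eps$, the same Fourier identity with symbol $\cos(2\pi\eta k\cdot e)$, and the same optimization $\eps=\eta^{2/(1+\beta)}$ in the two coordinate directions. The only cosmetic difference is the final bookkeeping: the paper sidesteps the resonance issue by using the pointwise bound $\sin^{2}(2\pi\eta k\cdot e_j)\ges \eta^{2}|k\cdot e_j|^{2}$ for $|k|\les \eta^{-1}$ and feeding the resulting one-scale estimate directly into Proposition \ref{prop:H-ssemigroup}, whereas you average $\sin^{2}$ over dyadic windows in $\eta$ and sum over shells; both give $\|\mu_T-1\|_{H^{-\gamma}}<\infty$ for $\gamma>\frac{1-\beta}{2(1+\beta)}$, i.e.\ $\alpha<f(\beta)$.
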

\begin{proof}
Let $e_1$ be the first vector of the canonical basis of $\R^2$ and $\eta\in (0,1)$. Let $\eps\le 1$ to be chosen below. We are going to construct a  competitor  $\tilde{\mu}$ for $\E_{\lambda,T}$. Since $\mu$ is symmetric, we only discuss the construction in $(0,T)$. If $\mu$ is given by
\[
 \mu_t=\sum_i \phi_i(t) \delta_{X_i(t)},
\]
we set
\[
 \mu^{\pm}_t=\sum_i \phi_i(t) \delta_{X_i^\pm(t)}
\]
where
\[
 X_i^{\pm}(t)=\begin{cases}
               X_i(t) & \textrm{if } t\in [0,T-\eps]\\[8pt]
               X_i(t)\pm \frac{\eta}{\eps} (t-(T-\eps)) e_1 & \textrm{if } t\in (T-\eps,T).
              \end{cases}
\]
In particular $\mu^\pm=\mu$ in $(0, T-\eps]$ while in $(T-\eps,T)$, we have  $\dot{X}_i^{\pm}=\dot{X}_i\pm \frac{\eta}{\eps} e_1$. We then set
\[
 \tilde{\mu}=\frac{1}{2}(\mu^-+\mu^+)=\sum_i \tilde{\phi}_i \delta_{\tilde{X}_i}.
\]
By minimality of $\mu$ we have $\E_{\lambda,T}(\mu)\le \E_{\lambda,T}(\tilde{\mu})$ which after rearangement of the terms yields

	\begin{equation}\label{centralestim}
	I(\tilde{\mu})-I(\mu)\ge\lambda\lt[ \|\mu_1-1\|_{H^{-\frac{1}{2}}}^2- \|\tilde{\mu}_1-1\|_{H^{-\frac{1}{2}}}^2\rt].
	\end{equation}
	We first estimate the left-hand side. By subbaditivity of the square root and as $\dot{X}_i^{\pm}=\dot{X}_i\pm \frac{\eta}{\eps} e_1$, we have for every $t\in(T-\eps,T)$
	\[
	 \sum_i \tilde{\phi}_i^{\frac{1}{2}}\le 2^{\frac{1}{2}} \dot{P}(\mu,t) \qquad \textrm{and} \qquad \sum_i\tilde{\phi}_i |\dot{\tilde{X}_i}|^2=\dotEcin(\mu,t) +\frac{\eta^2}{\eps^2}.
	\]
Integrating and using that by hypothesis, $P(\mu,\eps)\les_\mu \eps^\beta$, we find
\begin{equation}\label{eq:difI}
 I(\tilde{\mu})-I(\mu)\les P(\mu,\eps) +\frac{\eta^2}{\eps}\les_\mu \eps^{\beta} +\frac{\eta^2}{\eps}.
\end{equation}
We now estimate the right-hand side of \eqref{centralestim}. To simplify a bit the notation we let $\sigma=\mu_T$ and $\tilde{\sigma}=\tilde{\mu}_T$. Notice that if $\tau_\eta f(x)=f(x-\eta e_1)$ we have
\[
 \tilde{\sigma}=\frac{1}{2} (\tau_{-\eta} \sigma+\tau_\eta \sigma).
\]
Therefore
\[
 \what{\tilde{\sigma}}_k=\frac{1}{2} \lt(\exp(-2i\pi  \eta k\cdot e_1)+\exp(2i\pi  \eta k\cdot e_1) \rt) \what{\sigma}_k=\cos(2\pi \eta k\cdot e_1)\what{\sigma}_k.
\]
This gives
\[
 \|\mu_T-1\|_{H^{-\frac{1}{2}}}^2- \|\tilde{\mu}_1-1\|_{H^{-\frac{1}{2}}}^2=\sum_{k\in  \Z^d} \frac{\sin^2(2\pi \eta k\cdot e_1)}{|k|}|\what{\sigma}_k|^2\ges \eta^2 \sum_{|k|\les \eta^{-1}} \frac{|k\cdot e_1|^2}{|k|} |\sigma_k|^2.
\]
Combining this with \eqref{centralestim} and \eqref{eq:difI} we obtain
\[
 \eps^{\beta} +\frac{\eta^2}{\eps}\ges_{\mu, \lambda} \eta^2 \sum_{|k|\les \eta^{-1}} \frac{|k\cdot e_1|^2}{|k|} |\sigma_k|^2.
\]
Repeating the construction but with a shear in the direction $e_2$ instead of $e_1$ and summing the estimates, we get after optimizing in $\eps$ by choosing $\eps=\eta^{2/(1+\beta)}$,
\begin{equation}\label{eq:mainestimlower}
 \eta^\frac{2\beta}{1+\beta}\ges_{\mu,\lambda} \eta^2 \sum_{|k|\les \eta^{-1}} |k| |\sigma_k|^2.
\end{equation}

By Proposition \ref{prop:H-ssemigroup},  for $\alpha\in (0,2)$,
\[
 \|\sigma-1\|_{H^{-\frac{1}{2}(2-\alpha)}}^2\les \int_0^1  \eta^{3-\alpha} \sum_{|k|\les \eta^{-1}} |k| |\sigma_k|^2 \frac{d\eta}{\eta}\stackrel{\eqref{eq:mainestimlower}}{\les_{\mu,\lambda}} \int_0^1  \eta^{f(\beta)-\alpha} \frac{d\eta}{\eta}.
\]
The right-hand side is indeed finite for every $\alpha<f(\beta)$. This concludes the proof.
\end{proof}
As a direct consequence of Theorem \ref{thm:lower} and Lemma \ref{lemdimper} we get
\begin{corollary}[Lower bound in Theorem \ref{thm:main1}]\label{cor:lower}
Let $\mu$ be a symmetric minimizer of $\E_{\lambda,T}$ and assume that there exists $\beta\in(0,1)$ such that $\limsup_{\eps\to 0} \eps^{-\beta} I(\mu,\eps)<\infty$.  Then
\[
 \underline{\dim}\, \mu_1\ge f(\beta).
\]
\end{corollary}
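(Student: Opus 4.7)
The plan is to simply combine Theorem \ref{thm:lower} with Lemma \ref{lemdimper}, which is essentially the content of the statement ``as a direct consequence'' in the corollary. The two ingredients fit together as follows: Theorem \ref{thm:lower} produces membership of $\mu_T-1$ in a negative Sobolev space $H^{-\frac{1}{2}(2-\alpha)}$, and Lemma \ref{lemdimper} (applied in dimension $d=2$) translates such membership into a lower bound on the lower Hausdorff dimension. There is essentially no obstacle once the two ingredients are in place.

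More concretely, I would first recall the normalization. Because $\mu$ is a symmetric minimizer of $\E_{\lambda,T}$ defined in \eqref{def:ener}, the implicit normalization $\mu_{\pm T}(Q)=1$ gives $\mu_T\in \M^+(Q)$ with $\mu_T(Q)=1$, so Lemma \ref{lemdimper} is applicable to $\sigma=\mu_T$. Next, I would invoke Theorem \ref{thm:lower}: under the hypothesis $\limsup_{\eps\to 0^+}\eps^{-\beta} I(\mu,\eps)<\infty$, we have
\[
\|\mu_T-1\|_{H^{-\frac{1}{2}(2-\alpha)}}<\infty \qquad \text{for every } \alpha<f(\beta)=\frac{1+3\beta}{1+\beta}.
\]

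Finally, I would feed this information into \eqref{eq:characlowerper} from Lemma \ref{lemdimper}, which in dimension $d=2$ reads
\[
\underline{\dim}_Q\, \mu_T \ge \sup\{\alpha \,:\, \|\mu_T-1\|_{H^{-\frac{1}{2}(2-\alpha)}}<\infty\}.
\]
Since the supremum on the right-hand side is at least $f(\beta)$ by the previous step, one concludes $\underline{\dim}\, \mu_T\ge f(\beta)$. Taking $\alpha\up f(\beta)$ delivers the desired bound, and no further argument is needed.

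The only mildly delicate point to check is that the supremum in Lemma \ref{lemdimper} is indeed attained in the limit, i.e.\ that knowing $\|\mu_T-1\|_{H^{-\frac{1}{2}(2-\alpha)}}<\infty$ for all $\alpha<f(\beta)$ is enough to conclude $\underline{\dim}\, \mu_T\ge f(\beta)$; this is immediate from the supremum formulation. Thus the proof is genuinely a one-line combination of the two previous results, and I would present it as such.
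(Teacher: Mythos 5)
Your proposal is correct and is exactly the paper's argument: the paper states the corollary as a direct consequence of Theorem \ref{thm:lower} and Lemma \ref{lemdimper} (with $d=2$), which is precisely the one-line combination you give.
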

We now turn to the upper bound in Theorem \ref{thm:main1}.

\begin{theorem}[Upper bound in Theorem \ref{thm:main1}]\label{thm:upper}
Let $\mu$ be  such that $\limsup_{\eps\to 0} \eps^{-\beta} I(\mu,\eps)<\infty$ for some $\beta\in(0,1)$.  Then
\[
 \overline{\dim}\, \mu_T\le g(\beta)=\frac{4(1-\beta)}{1+\beta}.
\]
\end{theorem}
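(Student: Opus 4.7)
The plan is to use Lemma~\ref{charaup}: to prove $\overline{\dim}\,\mu_T\le g(\beta)$ it is enough to show that for every $\alpha>g(\beta)$, no nontrivial positive submeasure $\tilde\sigma\le\mu_T$ can satisfy $\tilde\sigma(B_r(x))\le r^\alpha$ for all $x\in Q$ and $r\le 1$. I therefore assume such a $\tilde\sigma$ exists with $M:=\tilde\sigma(Q)>0$ and aim to derive $M\lesssim \eps^{\beta-(4-\alpha)/(4+\alpha)}$ for every small $\eps$. Since $\alpha>g(\beta)$ is exactly $\beta-(4-\alpha)/(4+\alpha)>0$, sending $\eps\to 0$ then forces $M=0$, a contradiction. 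The argument will use neither minimality nor the no-loop property, which explains the generality stated.

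Given $\eps\in (0,T)$, an averaging argument produces $t^*\in (T-\eps,T)$ with $\dot P(\mu,t^*)\le 2 P(\mu,\eps)/\eps$ and $\delta:=T-t^*\le \eps$. Writing $\mu_{t^*}=\sum_j \phi_j \delta_{X_j}$ and iterating Proposition~\ref{prop:subsystem}, I decompose $\mu\restr (t^*,T)\times Q=\sum_j \mu^j$ into admissible forward subsystems with $\mu^j_{t^*}=\phi_j\delta_{X_j}$ and $\mu^j_T=:\nu_j$, so that $\sum_j \nu_j=\mu_T$. A crucial point is that the kinetic energy \emph{adds} along this decomposition, $\Ecin(\mu,\delta)=\sum_j \Ecin(\mu^j,\delta)$, even without any mutual singularity of the $\nu_j$'s, since $\Ecin=\int|v|^2 d\mu$ with $v$ the shared velocity field. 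I then allocate $\tilde\sigma$ to the subsystems by $\tilde\nu_j:=\tilde\sigma\,d\nu_j/d\mu_T\le \min(\nu_j,\tilde\sigma)$ and set $\tilde\phi_j:=\tilde\nu_j(Q)$, so that $\sum_j \tilde\phi_j=M$.

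Two complementary lower bounds then follow. For the perimeter,
\[
\sum_j \tilde\phi_j^{1/2}\le \sum_j \phi_j^{1/2}=\dot P(\mu,t^*)\lesssim \frac{P(\mu,\eps)}{\eps}.
\]
For the kinetic energy, the Benamou--Brenier inequality~\eqref{Ecinlowerintern} applied to each subsystem yields
\[
\Ecin(\mu^j,\delta)\ge \frac{1}{\delta}\,W_{\per}^2(\phi_j\delta_{X_j},\nu_j)=\frac{1}{\delta}\int_Q \nper{y-X_j}^2 d\nu_j \ge \frac{1}{\delta}\int_Q \nper{y-X_j}^2 d\tilde\nu_j.
\]
Choosing $R_j:=\tfrac{1}{2}\tilde\phi_j^{1/\alpha}\le 1/2$, the ball hypothesis gives $\tilde\nu_j(B_{R_j}(X_j))\le R_j^\alpha\le \tilde\phi_j/2$, hence $\tilde\nu_j(Q\setminus B_{R_j}(X_j))\ge \tilde\phi_j/2$, and therefore $\int \nper{y-X_j}^2 d\tilde\nu_j\gtrsim \tilde\phi_j^{1+2/\alpha}$. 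Summing and using $\delta\le\eps$ gives $\Ecin(\mu,\eps)\gtrsim \tfrac{1}{\eps}\sum_j \tilde\phi_j^{1+2/\alpha}$.

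To close, I use the log-convexity of $p\mapsto \log \sum_j \tilde\phi_j^p$ (H\"older): with $\theta=4/(4+\alpha)$ chosen so that $\theta\cdot\tfrac12+(1-\theta)(1+\tfrac{2}{\alpha})=1$,
\[
M=\sum_j \tilde\phi_j\le \Bigl(\sum_j \tilde\phi_j^{1/2}\Bigr)^{\theta}\Bigl(\sum_j \tilde\phi_j^{1+2/\alpha}\Bigr)^{1-\theta}\lesssim \Bigl(\frac{P(\mu,\eps)}{\eps}\Bigr)^{\theta}\bigl(\eps\,\Ecin(\mu,\eps)\bigr)^{1-\theta}.
\]
Collecting the $\eps$-powers gives $\eps^{-\theta+(1-\theta)}=\eps^{(\alpha-4)/(4+\alpha)}$, and Young's inequality yields $P^{\theta}\Ecin^{1-\theta}\le \theta P+(1-\theta)\Ecin\le I(\mu,\eps)$, so
\[
M\lesssim \eps^{(\alpha-4)/(4+\alpha)}\,I(\mu,\eps)\lesssim \eps^{\beta-(4-\alpha)/(4+\alpha)}\xrightarrow[\eps\to 0]{}0,
\]
forcing $M=0$ and the contradiction. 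The step I expect to be the main obstacle is the second paragraph: carefully iterating Proposition~\ref{prop:subsystem} to produce the (possibly countable) subsystem decomposition and the additivity of $\Ecin$ without invoking any no-loop structure, together with a bookkeeping check that the H\"older chain converges absolutely (which is automatic once $\sum_j \tilde\phi_j=M<\infty$ and the right-hand sides in the two lower bounds are finite). Once this is set up, the remaining step is the quantization-flavored interpolation alluded to in Remark~\ref{rem:quant}.
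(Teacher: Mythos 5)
Your proof is correct, and its analytic core is the same as the paper's: the quantization-type lower bound $\int\nper{y-X_j}^2\,d\tilde\nu_j\ges \tilde\phi_j^{1+2/\alpha}$ extracted from the ball condition of Lemma \ref{charaup}, the H\"older interpolation with exponent $\theta=4/(4+\alpha)$, Young's inequality to reconstitute $I(\mu,\eps)$, and the comparison with $I(\mu,\eps)\les_\mu\eps^\beta$, noting that $\beta>(4-\alpha)/(4+\alpha)$ is equivalent to $\alpha>g(\beta)$. Where you genuinely deviate is in the bookkeeping before the interpolation. The paper extracts a single subsystem $\tilde\mu\le\mu$ with $\tilde\mu_T=\sigma$ and bounds its perimeter on $(T-\eps,T)$ from below by $\eps\,\dot P(\tilde\mu,T-\eps)$, invoking that $\dot P$ is increasing, i.e.\ the no-loop property --- a structural fact that the bare hypothesis of the theorem (no minimality) does not provide; it then splits $\sigma$ along an optimal plan to the atoms of $\tilde\mu_{T-\eps}$. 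You instead select a good slice $t^*\in(T-\eps,T)$ by averaging, so that $\dot P(\mu,t^*)\les P(\mu,\eps)/\eps$ without any monotonicity, decompose $\mu$ on $(t^*,T)$ into per-atom forward subsystems sharing the velocity field, and route $\tilde\sigma$ to the atoms via the Radon--Nikodym densities $d\nu_j/d\mu_T$, using $\tilde\phi_j\le\phi_j$ on the perimeter side. What this buys is a proof valid verbatim for any admissible $\mu$ with the stated local energy decay, matching the generality claimed in the statement; the countable iteration of Proposition \ref{prop:subsystem} that you flag as the main obstacle is exactly the forward-subsystem decomposition the paper uses elsewhere (mass conservation of each subsystem shows the remainder has zero mass, and additivity of $\Ecin$ holds because all subsystems carry the same velocity), so it is unproblematic. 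One cosmetic slip: with $R_j=\tfrac12\tilde\phi_j^{1/\alpha}$ you get $\tilde\nu_j(B_{R_j}(X_j))\le 2^{-\alpha}\tilde\phi_j$, which is $\le\tilde\phi_j/2$ only for $\alpha\ge1$; either take $R_j=2^{-1/\alpha}\tilde\phi_j^{1/\alpha}$ as in the paper or accept an $\alpha$-dependent constant, since $1-2^{-\alpha}>0$ in any case --- the conclusion is unaffected.
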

\begin{proof}
	The proof builds on the proof of \cite[Proposition 5.3]{CGOS} but in  the case where the target measure is not the Lebesgue measure.  Let $\alpha<\overline{\dim} \, \mu_T$. By  Lemma \ref{charaup} there exists $0\neq \sigma\le \mu_T$ such that $\sigma(B_r(x))\le r^\alpha$ for every $x\in Q$ and $r\le 1$. Let $\tilde{\mu}\le \mu$ be given by Proposition \ref{prop:subsystem} and such that $\tilde{\mu}_T=\sigma$. By construction we have $I(\tilde{\mu},\eps)\le I(\mu,\eps)$. Notice also that since $\mu$ does not contain loops, also $\tilde{\mu}$ does not contain loops. 
 For $\eps>0$, denote
\[
 \tilde{\mu}_\eps=\sum_i \phi_i \delta_{X_i}.
\]
Then, using Young's inequality, and since $(0,T)\ni t\mapsto P(\widetilde{\mu},t)$ is increasing by the no-loop condition,
\begin{equation}\label{almostupper}
 I(\tilde{\mu},\eps)\ge \eps\sum_i \phi_i^{\frac{1}{2}} +\frac{W^2_{\per}(\tilde{\mu}_\eps, \sigma)}{\eps}\ges \eps^{\frac{4-\alpha}{4+\alpha}} \lt(\sum_i \phi_i^{\frac{1}{2}}\rt)^{\frac{4}{4+\alpha}} \lt(W^2_{\per}(\tilde{\mu}_\eps, \sigma)\rt)^{\frac{\alpha}{4+\alpha}}.
\end{equation}
We claim that
\begin{equation}\label{claimW2}
 W^2_{\per}(\tilde{\mu}_\eps, \sigma)\ges \sum_i \phi_i^{\frac{\alpha+2}{\alpha}}.
\end{equation}
In this case, since
%Indeed, provided \eqref{claimW2} holds, using that
\[
 \frac{1}{2} \frac{4}{4+\alpha} + \frac{\alpha+2}{\alpha} \frac{\alpha}{4+\alpha}=1
\]
by H\"older inequality we would get
\[
 \lt(\sum_i \phi_i^{\frac{1}{2}}\rt)^{\frac{4}{4+\alpha}} \lt(W^2_{\per}(\tilde{\mu}_\eps, \sigma)\rt)^{\frac{\alpha}{4+\alpha}}\stackrel{\eqref{claimW2}}{\ges} \lt(\sum_i \phi_i^{\frac{1}{2}}\rt)^{\frac{4}{4+\alpha}} \lt(\sum_i {\phi}^{\frac{\alpha+2}{\alpha}}\rt)^{\frac{\alpha}{4+\alpha}}\ge \sum_i \phi_i=\sigma(Q).
\]
Plugging this back into \eqref{almostupper} would give
\[
 \eps^\beta\ges_\mu I(\mu,\eps)\ge I(\tilde{\mu},\eps)\ge  \eps^{\frac{4-\alpha}{4+\alpha}} \sigma(Q)
\]
and thus $\beta\le (4-\alpha)/(4+\alpha)$ i.e. $\alpha\le g(\beta)$.\\

We are thus left with the proof of \eqref{claimW2}.  We have
\begin{equation}\label{quantprob}
 W^2_{\per}(\widetilde{\mu_\eps}, \sigma)= \min_{\sum_i \sigma_i=\sigma, \sigma_i(Q)=\phi_i} \sum_i\int_{Q} \nper{x-X_i}^2 d\sigma_i.
\end{equation}

We fix $i$ and assume up to translation that $X_i=0$.   Let $r$ be such that $\sigma_i(B_r)=\phi_i$ (notice that since $\sigma_i\le \sigma$, $\sigma_i(B_r)$ is a continuous function of $r$) and let $r_i= 2^{-1/\alpha}\phi_i^{1/\alpha}$ so that $\sigma_i(B_{r_i})\le r_i^\alpha\le  \phi_i/2$. We thus have $\sigma_i(B_r\backslash B_{r_i})\ge \phi_i/2$ and then
\[
 \int_{Q} \nper{x}^2 d\sigma_i\ge \int_{B_r\backslash B_{r_i}} r_i^2d\sigma_i\ges \phi_i^{\frac{2}{\alpha}+1}.
\]
Summing over $i$ concludes the proof of \eqref{claimW2}.
	
\end{proof}		
		
\begin{remark}\label{rem:quant}
 Notice that given $\sigma$ and $\Phi=\{\phi_i\}_{i=1}^N$, \eqref{quantprob} can be seen as a (non-standard) quantization problem
 \[
  R_\sigma(\Phi)=\min_{X_i} W^2_{\per}\lt(\sum_i \phi_i \delta_{X_i}, \sigma\rt)
 \]
and \eqref{claimW2} can be rewritten as
\begin{equation}\label{eq:quantimprove}
  R_\sigma(\Phi)\ges \sum_i \phi_i^{1+\frac{2}{\alpha}}.
\end{equation}
If we keep the number $N$ of points fixed and minimize further over $\Phi$ we find the classical quantization problem, see e.g. \cite[Lemma 3.4]{GrafLushgy},
\[
 R_{\sigma,N}=\min_{\Phi} R_\sigma(\Phi).
\]
Recalling the definition \cite[Definition 11.1]{GrafLushgy} of the lower quantization dimension of $\sigma$ as
\[
 \underline{D}_2(\sigma)=\liminf_{N\to \infty} \frac{2\log N}{- \log R_{\sigma,N}}
\]
we have by \cite[Theorem 11.5]{GrafLushgy}
\begin{equation}\label{quantdim}
 \overline{\dim} \, \sigma\le \underline{D}_2(\sigma).
\end{equation}
Equivalently, for every $\alpha<\overline{\dim}\, \sigma$,
\[
 R_{\sigma,N} \ges N^{-\frac{2}{\alpha}}.
\]
Since this estimate is also implied by \eqref{eq:quantimprove}, we can see \eqref{eq:quantimprove} as a refinement of \eqref{quantdim}.

\end{remark}

\subsection{Local energy bounds: Proof of Theorem \ref{thm:main2}}

%%%%%%%%%%%%%%%%%%%%%%%%%%%%%%%%%%%%%%%%%%%%%

The main result of this section is that we can connect any two given probability measures $\bar \mu_{\pm}$ at small cost. 
\begin{proposition}\label{prop:estimcons}
 Let $T>0$ and $\bar \mu_{\pm}$ be probability measures on $Q$. Then,  for every $\eta\in(0,1)$,
 \begin{equation}\label{eq:toprovelocalscaling}
  \min \{I(\mu) \ : \ \mu_{\pm T}=\bar \mu_{\pm}\}\le \frac{1+\eta}{2T} W_{\per}^2(\bar{\mu}_-,\bar{\mu}_+) + \frac{C}{\eta} \max\lt(T,T^{\frac{1}{3}}\rt).
 \end{equation}

\end{proposition}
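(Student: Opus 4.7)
The plan is to build a competitor explicitly by discretizing $\bar\mu_\pm$ onto a regular grid and gluing three pieces: two boundary layers that connect $\bar\mu_\pm$ to atomic approximations via the building block of Lemma \ref{lem:consblock}, and a middle region in which these atomic measures are transported along a \emph{sparse} optimal transport plan. The middle kinetic energy will match the Wasserstein cost, while sparsity will keep the middle perimeter of order $T/r$ rather than $T/r^{2}$, which is what makes the whole scheme quantitative.

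I would fix $r\in(0,1]$ and $\delta\in(0,T/2)$ (to be optimized) and partition $Q$ into $N^{2}=r^{-2}$ cubes $\{Q_i\}$ of side $r$ with centers $\{X_i\}$. Setting $\phi_i^{\pm}=\bar\mu_\pm(Q_i)$ and $\nu^\pm=\sum_i\phi_i^{\pm}\delta_{X_i}$, the quantization bound gives $W_{\per}(\bar\mu_\pm,\nu^\pm)\les r$. In each boundary slab $(-T,-T+\delta)\times Q_i$ and $(T-\delta,T)\times Q_i$, I apply Lemma \ref{lem:consblock} with $\Phi=\phi_i^{\mp}$; the homogeneous Neumann conditions on $\partial Q_i$ make the cube-wise construction globally admissible. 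In the middle slab $(-T+\delta,T-\delta)\times Q$, I invoke \cite[Proposition 3.4]{PeyCut} to obtain an optimal transport plan for $W_{\per}^{2}(\nu^-,\nu^+)$ supported on $M\le 2N^{2}-1$ pairs, and along each such pair I move the corresponding mass $\phi_e$ at constant speed along a torus-geodesic segment joining its endpoints.

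A Cauchy--Schwarz bound gives $\sum_i(\phi_i^{\pm})^{1/2}\le N=1/r$ and $\sum_e\phi_e^{1/2}\le\sqrt{M}\les 1/r$. Combined with Lemma \ref{lem:consblock}, the two boundary layers thus contribute $\les \delta/r+r^{2}/\delta$, while the middle contributes exactly $W_{\per}^{2}(\nu^-,\nu^+)/(2(T-\delta))$ in kinetic energy plus a perimeter of order $T/r$. Using $W_{\per}(\nu^-,\nu^+)\le W_{\per}(\bar\mu_-,\bar\mu_+)+Cr$ with the elementary $(a+b)^{2}\le(1+\eta')a^{2}+(1+1/\eta')b^{2}$, and choosing $\eta'\sim\eta$ and $\delta\sim\eta T$, the kinetic term becomes $\frac{1+\eta}{2T}W_{\per}^{2}(\bar\mu_-,\bar\mu_+)+Cr^{2}/(\eta T)$, whence
\[
I(\mu)\le \frac{1+\eta}{2T}W_{\per}^{2}(\bar\mu_-,\bar\mu_+)+C\left(\frac{T}{r}+\frac{r^{2}}{\eta T}\right).
\]
Optimizing by choosing $r=\min(1,(\eta T^{2})^{1/3})$ makes the parenthesis of order $\eta^{-1}\max(T,T^{1/3})$ (using $\eta\le1$), which is the claimed bound.

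The main obstacle is the perimeter control in the middle layer: a generic optimal transport plan between two $N^{2}$-atomic measures can have up to $N^{4}$ supporting pairs, in which case the resulting perimeter would be of order $TN^{2}=T/r^{2}$ and the scheme would break down. The sparse-plan fact \cite[Proposition 3.4]{PeyCut} reduces the effective number of edges to $O(N^{2})$ and is the structural input making the construction quantitative; the remaining work is routine bookkeeping.
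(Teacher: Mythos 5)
Your construction is correct, but it takes a genuinely different route from the paper's. The paper (for $T\ll1$) discretizes McCann's interpolant between $\bar\mu_-$ and $\bar\mu_+$ at a dyadic sequence of times $t_k\to T$ on grids whose mesh $r_k\sim 2^{-k}T^{2/3}$ refines as $t_k\to T$, joins consecutive discretizations by sparse optimal plans, and recovers the exact traces $\mu_{\pm T}=\bar\mu_{\pm}$ only in the limit $k\to\infty$, the perimeter being summable because $\sum_k\delta^k2^k$ is geometric for $\delta<1/2$. You instead work at a single scale $r$, use one sparse plan across the bulk slab $(-T+\delta,T-\delta)$, and restore the exact boundary data by running Lemma \ref{lem:consblock} cube-by-cube in two layers of thickness $\delta\sim\eta T$; the Neumann condition on $\partial Q_i$ makes the cube-wise gluing admissible, exactly as in the proof of Proposition \ref{prop:scaling}. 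The key shared ingredient is the sparsity result \cite[Proposition 3.4]{PeyCut}, which both arguments need so that a plan between two $N^2$-atomic measures contributes a perimeter of order $N$ rather than $N^2$; your bookkeeping ($\sum_e\phi_e^{1/2}\le\sqrt{M}\les 1/r$, quantization error $\les r$, Young with $\eta'\sim\eta$, $\delta\sim\eta T$, then $r=\min(1,(\eta T^2)^{1/3})$) checks out. What your version buys: it is shorter, avoids the countable gluing and the limiting argument identifying the trace, and for $T\le 1$ it even yields the slightly better error $\eta^{-1/3}T^{1/3}$ in place of $C\eta^{-1}T^{1/3}$. What the paper's version buys: the time-refining, self-similar structure mirrors the expected geometry of minimizers near the boundary and is better adapted to the program of space-time localized bounds mentioned in the remark after Theorem \ref{thm:main2}. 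Only cosmetic points remain on your side: take $N=\ceil{1/r}$ so the grid is admissible; observe that $\eta T^2\ge1$ together with $\eta<1$ forces $T\ge1$, so the case $r=1$ indeed lands in the branch $\max(T,T^{1/3})=T$ (and there $1/(\eta T)\le T/\eta$); and note that on the middle slab coincident trajectories only decrease $\dot P$ by subadditivity of the square root.
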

\begin{proof}
 If $T\ges 1$ we can directly appeal to Lemma \ref{lem:consblock} with $r=\Phi=1$ so that we may assume without loss of generality that $T\ll 1$.
 Let $\pi$ be an optimal transport plan between $\bar \mu_-$ and $\bar \mu_+$. For $t\in [-T,T]$, we define the interpolated measure $\mu^t$ by
\[
 \int_Q \xi d\mu^t=\int_{Q\times Q} \xi\left( \frac{T-t}{2T} x+\frac{t+T}{2T}  y \right)\,d\pi(x,y) \qquad \forall \xi\in C^0(Q).
		\]
By definition we have for $-T\le t\le s\le T$
\begin{equation}\label{linearW2}
 W_{\per}^2(\mu^t,\mu^s)=\left( \frac{t-s}{2T}\right)^2 W_{\per}^2(\bar \mu_-,\bar \mu_+).
\end{equation}
Since for every $\mu$, recall \eqref{Ecinlowerintern},
\begin{equation}\label{eq:lowerEcin}
 E_{\rm cin}(\mu)\ge \frac{1}{2T} W_{\per}^2(\bar{\mu}_-,\bar{\mu}_+)
\end{equation}
 with equality for $\mu^t$, the idea is to construct a competitor $\mu$ as a discretized version of $\mu^t$. That is we want to be as close as possible to the equality in \eqref{eq:lowerEcin} while keeping a good control on the perimeter (which would be infinite for $\mu^t$). The construction is similar in $(-T,0]$ and in $[0,T)$ hence we only describe it in $[0,T)$. Notice that of course the two constructions coincide at $t=0$. We claim that we can construct $\mu$ on $[0,T)\times Q$ such that $\mu_T=\bar \mu_+$ and  for every $\eta\in (0,1)$, 
 \begin{equation}\label{claimcons}
  E_{\rm cin}(\mu, (0,T))\le \frac{1+\eta}{4T}W_{\per}^2(\bar \mu_-,\bar \mu_+) +\frac{C}{\eta} T^{\frac{1}{3}} \qquad \textrm{and } \qquad P(\mu, (0,T))\les T^{\frac{1}{3}}.
 \end{equation}
This would conclude the proof of \eqref{eq:toprovelocalscaling}.\\

 We  consider a dyadic decomposition of the time interval $[0,T]$ in intervals of the form $[t_k,t_{k+1}]$ and for each $t_k$ we will discretize $\mu^{t_k}$ on grids which refine as $k\to \infty$ (and $t_k\to T$). In $(t_k,t_{k+1})$ we simply interpolate using a sparse optimal transport plan.\\
We now give the details of the construction. 
We first define the $t_k$. For this we fix a  parameter
		\begin{equation}\label{choicet}
			\frac14<\delta<\frac12,
		\end{equation}
		and set $t_0=0$,
		\begin{equation}\label{choicetk}
		 t_k=t_{k-1}+(1-\delta)T\delta^{k-1} \qquad \textrm{for } k\ge 1.
		\end{equation}
We thus have
\[
 \lim_{k\to \infty}t_k= (1-\delta) T\sum_{k=0}^\infty \delta^k=T.
\]
 For $k\ge 0$, we now define $\mu_{t_k}$. 
 We pass not to construct the measures $\mu_t$; we begin by defining it on $t_k$ and then we suitably interpolate the $\mu_{t_k}$ via optimal transport plans.
 At time $t_0=0$ we want a surface energy of the order of $T^{-2/3}$ and thus around $T^{-4/3}$ Dirac masses (see \eqref{eq:scalingunif}). This suggests to choose $C\in (1/2,3/2)$ such that $N_0=C T^{-2/3}\in \N$ (recall that $T\ll1$ so such a choice of $C$ is possible) and  then set $r_0= N_0^{-1}$,
\begin{equation}\label{choiceNk}
 N_k= 2 N_{k-1}=2^k N_0 \qquad \textrm{and } \qquad r_k=N_k^{-1} \qquad \textrm{for } k\ge 1.
\end{equation}
For every $k\ge 0$ we then divide $Q$ in $N_k^2$ cubes $Q_{i,k}$ of sidelenght $r_k$. Denote also by $X_{i,k}$  the center of the cube $Q_{i,k}$, and $\varphi_{i,k}=\mu^{t_k}(Q_{i,k})$. We then set
		\[
		\mu_{t_k}=\sum_{i=1}^{N_k^2} \varphi_{i,k}\delta_{X_{i,k}} \qquad \textrm{for } k\ge 0.
		\]
 We complete now the construction of $\mu_t$ inside $(t_k,t_{k+1})$ for $k\ge 0$. We first recall  that if we consider two discrete measures $\lambda_0=\sum_{i=1}^n \phi_i \delta_{X_i}$ and $\lambda_1=\sum_{j=1}^m \psi_j \delta_{Y_j}$,
\[
 W_{\per}^2(\lambda_0,\lambda_1)=\min_{\Pi\in \R_+^{n\times m}}\lt\{ \sum_{ij} \Pi_{ij} \nper{X_i-Y_j}^2 \ :\  \sum_j \Pi_{ij}=\phi_i,\, \sum_i \Pi_{ij}= \psi_j\rt\},
\]
which is a linear programming problem with $n+m$ equality constraints. Therefore, while solutions can have up to  $n\times m$ non-zero entries, the extremal solutions have only at most $n+m$ (actually $n+m-1$ by  \cite[Proposition 3.4]{PeyCut}) non-zero entries. We call such a solution sparse. For $k\ge 0$, let $\pi^k$ be a sparse optimal transport plan between $\mu_{t_k}$ and $\mu_{t_{k+1}}$. {\color{red} See figure \ref{DGRfig3}.}
\begin{figure}\label{DGRfig3}
	\includegraphics[scale=.6]{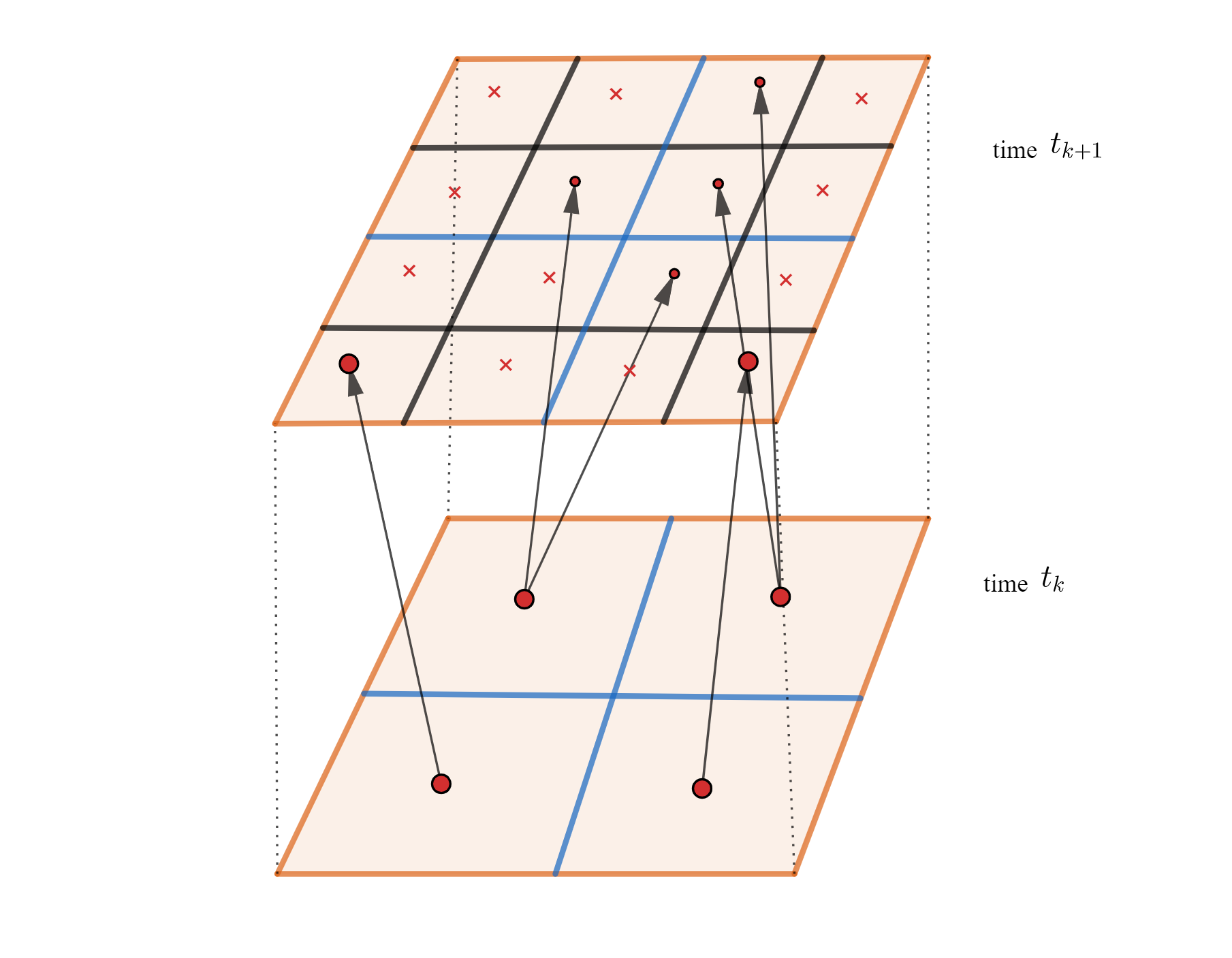}
	\caption{The idea is to connect the discrete measure at level $t_k$ with that at level $t_{k+1}$ with a sparse transport, i.e. with the least possible connections, in order to minimize the perimeter}.
\end{figure}

 We then let $\mu_t$ be the displacement interpolated measure between $\mu_{t_k}$ and $\mu_{t_{k+1}}$ through $\pi^k$ i.e.
\[
 \int_Q \xi d\mu_t=\int_{Q\times Q} \xi\left( \frac{t_{k+1}-t}{t_{k+1}-t_k} x+\frac{t-t_k}{t_{k+1}-t_k}  y \right)\,d\pi^k(x,y) \qquad \forall \xi\in C^0(Q).
\]
Notice that since $\mu_{t_k}$ is discrete for every $k\ge 0$, also $\mu_t$ is discrete  and we can write it as
\[
 \mu_t=\sum_i \phi_i \delta_{X_i}.
\]
Of course with the notation above, for every $\ell$ we have  $\phi_\ell=\pi^{k}_{ij}$  for some $i,j$ and $X_\ell$ is the linear interpolation between $ X_{i,k}$ and $X_{j,k+1}$.\\

We start by estimating the kinetic energy. By construction we have
\[
 E_k=\int_{t_{k}}^{t_{k+1}} \sum_i \phi_i |\dot{X}_i|^2=\frac{1}{t_{k+1}-t_k} W_{\per}^2(\mu_{t_k}, \mu_{t_{k+1}}).
\]
Now, by triangular and Young inequalities we have
		\[
		\begin{aligned}
			W_{\per}^2(\mu_{t_{k}},\mu_{t_{k+1}})&\le \left(W_{\per}( \mu_{t_{k}},\mu^{t_{k}})+W_{\per}( \mu^{t_{k}},\mu^{t_{k+1}})+W_{\per}( \mu^{t_{k+1}},\mu_{t_{k+1}})\right)^2\\
			&\le (1+\eta)W_{\per}^2( \mu^{t_{k}},\mu^{t_{k+1}})+\frac{C}{\eta}\left( W_{\per}^2( \mu_{t_{k}},\mu^{t_{k}})+W_{\per}^2(\mu_{t_{k+1}},\mu^{t_{k+1}})\right).
		\end{aligned}
		\]
		We have for $k\ge0$ by definition of $\mu_{t_k}$, (notice that all the masse of a cube $Q_{i,k}$ goes to the center of the cube)
		\begin{equation}\label{Wperrk}
		W_{\per}^2(\mu_{t_k},\mu^{t_k})\lesssim r_k^2.
		\end{equation}
 Using \eqref{linearW2}, we find
\begin{multline*}
 E_{\rm cin}(\mu,(0,T))=\sum_{k\ge 0} E_k\le (1+\eta)\sum_{k\ge 0} \frac{t_{k+1}-t_k}{(2T)^2}W_{\per}^2(\bar \mu_-,\bar \mu_+) +\frac{2C}{\eta}\sum_{k\ge 0} \frac{r_k^2}{t_{k+1}-t_k}\\
 =\frac{1+\eta}{4T}W_{\per}^2(\bar \mu_-,\bar \mu_+) +\frac{2C}{\eta}\sum_{k\ge 0} \frac{r_k^2}{t_{k+1}-t_k}.
\end{multline*}

Recalling the choice of $t_k$ in \eqref{choicetk}, the choice of $r_k$ in \eqref{choiceNk}  and the choice of $\delta$ in \eqref{choicet} we get 
\[
\sum_{k\ge 0} \frac{r_k^2}{t_{k+1}-t_k}\les  \sum_{k\ge 0} \frac{4^{-k} T^{\frac{4}{3}}}{T \delta^k}\\
 \les T^{\frac{1}{3}}.
\]
Therefore,
\begin{equation}\label{estimcin}
 E_{\rm cin}(\mu,(0,T))\le  \frac{1+\eta}{4T}W_{\per}^2(\bar \mu_-,\bar \mu_+) +\frac{C}{\eta} T^{\frac{1}{3}}.
\end{equation}

We now estimate the perimeter term $P(\mu,(0,T))$. We first notice that since $\pi^k$ are sparse optimal transport plans, if we set 
\[
 M(t)=\sharp\, \spt \mu_t
\]
then we have for every $k\ge 0$ and every $t\in[t_{k},t_{k+1}]$,
\begin{equation}\label{estimM}
 M(t)\le M(t_k)+M(t_{k+1}).
\end{equation}
Notice also that $M(t)$ is actually constant in each $(t_{k},t_{k+1})$. Moreover, since for $k\ge 0$, $M(t_k)=N_k^2$ we have by \eqref{choiceNk}
\begin{equation}\label{estimMin}
 M(t_k)\les  4^k T^{-\frac{4}{3}} \qquad \textrm{for all } k\ge 0.
\end{equation}
 By H\"older inequality we have for $k\ge 0$ and $t\in[t_{k},t_{k+1}]$,
\[
 \dot{P}(\mu,t)=\sum_{i=1}^{M(t)} \phi_i^{\frac{1}{2}}\le M(t)^{\frac{1}{2}}\stackrel{\eqref{estimM}\&\eqref{estimMin}}{\les} 2^k T^{-\frac{2}{3}}. 
\]
Integrating we find
\begin{equation}\label{tildeSin}
 P(\mu,(0,T))=\int_{0}^T \dot{P}(\mu,t) \les \sum_{k\ge 0} (t_{k+1}-t_k) 2^k T^{-\frac{2}{3}}\stackrel{\eqref{choicetk}}{\les} T^{\frac{1}{3}} \sum_{k\ge 0} \delta^k 2^k \stackrel{\eqref{choicet}}{\les}T^{\frac{1}{3}}.
\end{equation}
Combining this with \eqref{estimcin} concludes the proof of \eqref{claimcons}. We finally prove that $\mu_T=\bar \mu_+$. If $t\in (t_k,t_{k+1})$, by triangle inequality
\[
 W_{\per}(\mu_{t},\bar \mu_+)\le W_{\per}(\mu_{t},\mu_{t_{k+1}})+W_{\per}(\mu_{t_{k+1}},\mu^{t_{k+1}})+W_{\per}(\mu^{t_{k+1}},\bar \mu_+).
\]
Using again triangle inequality we have 
\[
 W_{\per}(\mu_{t},\mu_{t_{k+1}})\le W_{\per}(\mu_{t_k},\mu_{t_{k+1}})\le W_{\per}(\mu_{t_k},\mu^{t_k})+W_{\per}(\mu^{t_k},\mu^{t_{k+1}})+W_{\per}(\mu^{t_{k+1}},\mu_{t_{k+1}}).
\]
Thus
\begin{multline*}
 W_{\per}(\mu_{t},\bar \mu_+)\les W_{\per}(\mu_{t_k},\mu^{t_k})+W_{\per}(\mu^{t_k},\mu^{t_{k+1}}) +W_{\per}(\mu_{t_{k+1}},\mu^{t_{k+1}})+W_{\per}(\mu^{t_{k+1}},\bar \mu_+)\\
 \stackrel{\eqref{linearW2}\&\eqref{Wperrk}}{\les} r_k +\lt(\frac{t_{k+1}-t_k}{T}+ \frac{T-t_{k+1}}{T}\rt)W_{\per}(\bar \mu_-,\bar \mu_+).
\end{multline*}
This concludes the proof of $\mu_T=\bar \mu_+$ since $r_k+(t_{k+1}-t_k)+(T-t_{k+1})\to0$ as $k\to\infty$.

\end{proof}

We may now prove Theorem \ref{thm:main2}.
\begin{proof}[Proof of Theorem \ref{thm:main2}]
 Let $\bar \mu$ be a probability measure on $Q$ and $\mu$ be a symmetric minimizer of \eqref{limitProb} with $\bar \mu_{\pm}=\bar \mu$. For $\eps<T$, we apply Proposition \ref{prop:estimcons} with $\eta=1/2$, $T'=2\eps$, $\bar \mu_{-}'=\mu_{T-\eps}$ and $\bar\mu_+'=\mu_T$ to obtain by minimality of $\mu$,
 \[
  E_{\rm cin}(\eps)+P(\eps)\le \frac{3}{2\eps} W_{\per}^2(\mu_{T-\eps},\bar{\mu}) + C \max\lt(\eps,\eps^{\frac{1}{3}}\rt).
 \]
By \eqref{eq:estimeta} and \eqref{eq:equiparstate}, there exists $\Lambda$ with $|\Lambda|\les I(\mu)/T$ such that 
\begin{equation}\label{equiparthm2}
 P(\eps)=\frac{1}{2} P(\eps)+\frac{1}{2} E_{\rm cin}(\eps) +\Lambda \eps.
\end{equation}
We thus have 
 \[
  \frac{3}{2}E_{\rm cin}(\eps)+\frac{1}{2}P(\eps)\le \frac{3}{2\eps} W_{\per}^2(\mu_{T-\eps},\bar \mu) + C \max\lt(\eps,\eps^{\frac{1}{3}}\rt) + \Lambda \eps.
 \]
 Using once more that 
 \[
  E_{\rm cin}(\eps)\ge \frac{1}{\eps}W_{\per}^2(\mu_{T-\eps},\bar \mu)
 \]
we conclude after simplification that 
\[
 P(\eps)\les \max\lt(\eps,\eps^{\frac{1}{3}}\rt) + I(\mu) \frac{\eps}{T}.
\]
Plugging this into \eqref{equiparthm2} yields the same estimate for $E_{\rm cin}(\eps)$ and thus also for $I(\mu,\eps)$.
\end{proof}

%%%%%%%%%%%%%%%%%%%%%%%%%%%%%%%%%%%%%%%
%%%%%%%%%        Wasserstein 	%%%%%%%%%%%%%%%%%%%%%%%%%%%%%
%%%%%%%%%%%%%%%%%%%%%%ù
%%%%%%%%%%%%%%%%%%%%%%%%%%%%%%%%%%%%%%%%%%%%%%%%%

\section{Penalization with Wasserstein distance}\label{sec:wasserstein}
In this section we work in $\R^2$ i.e. $d=1$ and consider  for $\mu \in \calA_T^*$ the functional 
\[
 \Eund_{\lambda,T}(\mu)=I(\mu)+\lambda W_{\per}^2(\mu_{\pm T},1)=\int_{-T}^T \sum_{i=1}^{N(t)} \lt[1+ \phi_i |\dot{X}_i|^2\rt] dt +\lambda W_{\per}^2(\mu_{\pm T},1).
\]
As above it is possible to prove that there exists a symmetric minimizer of $\Eund_{\lambda,T}$.
When $d=1$, as a simple consequence of the no-loop condition the coupling between $\mu_T$ and $\mu_t$ induced by the subsystems of $\mu$ is monotone.
\begin{lemma}\label{lem:monotone}
 Let $\mu\in \calA^*_T$ be a symmetric minimizer of $\Eund_{\lambda,T}$. For every $t\in [0,T)$, if $\mu_t=\sum_i \phi_i \delta_{X_i}$ and if $\mu^{t,i}$ are the forward subsystems starting from $X_i$, then considering $\mu_t$ as a periodic measure on $\mathbb R$ we have
 \[ X_i < X_j \quad \implies  \quad \sup\{\spt \mu^{t,i}_T\}\le \inf\{\spt \mu^{t,j}_T\}.\]
Moreover for $i\neq j$, for all $x\in Q$, $\mu^{t,i}_T(\{ x\})\mu^{t,j}_T(\{x\})=0$. As a consequence, there exists a monotone map $\Psi_t: \spt \mu_T\to \R$ such that $\Psi_t-x$ is periodic and $\Psi_t\sharp \mu_T=\mu_t$. Finally $t\mapsto \Psi_t$ is compatible with the tree structure of $\mu$ in the sense that if $0\le t<s<T$ and $\mu_s^{t,i}=\sum \tilde{\phi}_j \delta_{\tilde{X}_j}$ then $\Psi_s(x)=\tilde{X}_j$ implies $\Psi_t(x)=X_i$.
 \end{lemma}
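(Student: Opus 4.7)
The plan is to combine the no-loop property of minimizers with the finite graph structure of $\mu$ on interior slabs, and then to translate the resulting planar monotonicity into the existence of the map $\Psi_t$.

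The atomic non-overlap $\mu^{t,i}_T(\{x\})\mu^{t,j}_T(\{x\})=0$ for $i \ne j$ and all $x \in Q$ is an immediate consequence of Remark \ref{rem:noloopsym}: since $\mu^{t,i}_T$ and $\mu^{t,j}_T$ are mutually singular for a symmetric minimizer, they cannot share any common atom.

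For the monotonicity of supports, I would use Proposition \ref{prop:structure}: on each slab $Q_{T'}$ with $T'<T$, $\mu$ is a finite forest of piecewise-affine trajectories without loops in the space-time plane. Lifting $Q$ to $\R$ and fixing a fundamental domain in which the atoms of $\mu_t$ are ordered as $\tilde X_1 < \dots < \tilde X_{N(t)}$, I would argue by contradiction. If the ordering of the supports at time $T$ were violated, there would exist indices $i<j$ and, for $T'<T$ close enough to $T$, two branches $\gamma_i,\gamma_j$ of the lifted graph with $\gamma_i(t)=\tilde X_i$, $\gamma_j(t)=\tilde X_j$ and $\gamma_i(T')>\gamma_j(T')$. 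By continuity they would have to cross at some time $s\in(t,T')$ at a common point $Y_\ast$, and since two distinct piecewise-affine trajectories can coincide at an isolated time only at a branching vertex of the graph, $(Y_\ast,s)$ would necessarily be such a vertex. At this vertex I would ``uncross'' the pairing between incoming and outgoing edges, producing another choice of subsystems satisfying Proposition \ref{prop:subsystem} with strictly fewer crossings, and iterate this procedure over the finitely many vertices to obtain subsystems whose lifted supports at time $T$ are ordered as claimed. I expect this uncrossing step, together with the careful treatment of vertices where incoming and outgoing masses fail to match one-to-one (so that mass must be split across several outgoing branches), to be the main technical obstacle.

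Once mutual singularity and monotonicity are in hand, I would define $\Psi_t$ by setting $\Psi_t(x) = X_{i(x)}$ for the unique index $i(x)$ with $x \in \spt \mu^{t,i(x)}_T$, and extend periodically to $\R$. The pushforward identity $\Psi_t\sharp \mu_T=\mu_t$ would then follow from the decomposition $\mu_T=\sum_i \mu^{t,i}_T$ together with $\mu^{t,i}_T(Q)=\phi_i$, while the monotonicity of $\Psi_t$ and the periodicity of $\Psi_t-x$ would be direct translations of the support ordering. Finally, the tree-compatibility would be a consequence of the transitivity of Proposition \ref{prop:subsystem}: the forward subsystem $\mu^{s,\tilde X_j}$ emanating from an atom of $\mu^{t,i}_s$ is by construction contained in $\mu^{t,i}$, so $\spt \mu^{s,\tilde X_j}_T \subset \spt \mu^{t,i}_T$, which immediately gives $\Psi_s(x) = \tilde X_j \implies \Psi_t(x) = X_i$.
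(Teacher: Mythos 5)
Your overall architecture (atomic disjointness from mutual singularity, $\Psi_t$ built from the subsystem supports, pushforward from $\sum_i\mu^{t,i}_T=\mu_T$, tree-compatibility from nesting of subsystems) matches the paper, but the central step -- the ordering of the supports -- is where your argument has a genuine gap. In the paper this ordering is deduced directly from the no-loop property: for a symmetric minimizer and $t\ge 0$, branches emanating from distinct atoms $X_i\neq X_j$ of $\mu_t$ can never meet at any later time (a meeting at some $(Y,s)$, $s\in(t,T)$, means the forward subsystems $\mu^{t,i}_s$ and $\mu^{t,j}_s$ share the atom $Y$, i.e.\ a merging, which is exactly what Lemma \ref{lem:nolopp}/Remark \ref{rem:noloopsym} and the monotonicity in Remark \ref{rem:noloop} exclude), so by continuity of the trajectories and the intermediate value theorem the time-$T$ supports inherit the order of the roots, with possible contact only at endpoints, where mutual singularity forbids a common atom. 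You instead locate a crossing vertex and then, rather than deriving a contradiction, propose to ``uncross'' the pairing of incoming and outgoing edges and induct on the number of crossings. This is both incomplete and off-target: incomplete because, as you yourself flag, the induction is not established (when incoming and outgoing masses do not match one-to-one you must split mass, and it is not shown that the number of crossings strictly decreases, nor that the modified objects are still subsystems in the sense of Proposition \ref{prop:subsystem} emanating from the $X_i$); off-target because re-choosing the decomposition proves at best the ordering for \emph{some} subsystems, whereas the lemma concerns \emph{the} forward subsystems of the minimizer (which are in fact uniquely determined precisely because no merging occurs for $t\ge 0$), and the crossing configuration you are trying to repair cannot occur in the first place. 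The fix is to turn the crossing directly into a violation of the no-loop property, not into a surgery.

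Two smaller points. First, your definition ``$\Psi_t(x)=X_{i(x)}$ for the unique $i(x)$ with $x\in\spt\mu^{t,i(x)}_T$'' is not well posed: the statement only gives $\sup\spt\mu^{t,i}_T\le\inf\spt\mu^{t,j}_T$, so adjacent supports may share an endpoint $x$ with $\mu_T(\{x\})=0$ (recall $\mu_T$ need not be atomic at this stage), and there uniqueness fails. The paper resolves this with an explicit tie-break (assign $x$ to the atom with $\mu^{t,i}_T(\{x\})>0$ if there is one, and otherwise to the smallest admissible $X_i$), which is what makes monotonicity, periodicity of $\Psi_t-x$ and the pushforward literally correct; your tie-breaking at times $s$ and $t$ must also be checked to be consistent for the compatibility statement. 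Second, you do not need Proposition \ref{prop:structure} (finite graph, piecewise affine branches) for the crossing argument: continuity of the trajectories of the subsystems on $[t,T]$ suffices, and invoking the interior graph structure forces you to take $T'<T$ and then pass to the limit, an extra step your sketch does not address.
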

\begin{proof}
 The first part of the claim is a direct consequence of the no-loop condition, see Remark \ref{rem:noloopsym}. We define $\Psi_t$ as follows:
 \[
  \Psi_t(x)=\begin{cases}
             X_i &\textrm{ if } \mu_T^{t,i}(x)>0\\
             \min\{ X_i \ : x\in \spt \mu_T^{t,i}\} &\textrm{ if } \mu_T(x)=0.
            \end{cases}
 \]
By the definition of forward subsystems and the no-loop property we have that $\Psi_t$ is indeed monotone, $\Psi_t-x$ is periodic and $\Psi_t\sharp \mu_T=\mu_t$. Finally, if $0\le t<s<T$, then the forward subsystems of $\mu_s$ are given by the collection of all forward subsystems of $\mu^{t,i}_s$ which concludes the proof.
\end{proof}
\begin{remark}
 Since $\Psi_t$ is monotone and $\Psi_t-x$ is periodic, up to a translation it is the optimal transport map for $W_{\per}^2(\mu_T,\mu_t)$.
\end{remark}

We now prove the equivalence between the  Eulerian formulation and a  Lagrangian formulation of the problem. In the  Lagrangian point of view, the boundary penalization term $\lambda W^2_{\per}(\cdot,1)$ can be considered as part of the kinetic energy (see \eqref{eq:lagrb}). We first introduce some notation. For $\tau>0$ we set
\[
 \calC_\tau=\{ X: [-\tau,\tau]\times Q\to Q \ : \ X(-\tau,x)=X(\tau,x)=x, \textrm{ and for a.e. } x, \ t\mapsto X(t,x) \textrm{ is AC}\}.
\]
Here and from now on, AC stands for absolutely continuous.
For $\lambda, T>0$ we set $T_\lambda= T+ \lambda^{-1}$. We define for $X\in \calC_{T_\lambda}$, $y\in Q$ and $t\in(-T,T)$ the multiplicity function, counting the number of particles in $y$ at  time $t$:
\begin{equation}\label{defphiX}
 \phi_{X}(y,t)=|\{x\in Q \ : \ X(t,x)=y\}|.
\end{equation}
We then set 
\begin{equation}\label{eq:lagrb}
 \cL_{\lambda,T}(X)= \int_{Q_T} \frac{1}{\phi_X(X(t,x),t)} dx dt+\int_{Q_{T_\lambda}} |\partial_t X|^2 dx dt.
\end{equation}
% \[
%  \cL_{\lambda,T}(X)=\int_{Q_{T_\lambda}} |\partial_t X|^2+ \frac{\chi_{[-T,T]}(t)}{\phi_X(X(t,x),t)} dx dt.
% \]
We use here the convention that $0^{-1}=\infty$. Let us insist on the fact that in the definition of $\cL_{\lambda,T}$ the kinetic energy is computed for $|t|\le T_\lambda$ while the perimeter part is computed only for $|t|\le T<T_\lambda$.

\begin{theorem}\label{barycenter}
	For every $T,\lambda>0$,
	\[
	\min_{\mu\in \calA^*_T} \Eund_{\lambda,T}(\mu)=\min_{X\in \calC_{T_{\lambda}}} \cL_{\lambda,T}(X).
	\]
	Moreover, from every  collection of curve $X\in \calC_{T_\lambda}$ minimizing $\cL_{\lambda,T}$, letting for $t\in[-T,T]$, $\mu_t:= X(t,\cdot)_\sharp 1$ we can construct a minimizer of $\Eund_{\lambda,T}$. Viceversa,  every symmetric minimizer $\mu\in \calA^*_T$ of $\Eund_{\lambda,T}$, induces a minimizing $X\in \calC_{T_\lambda}$ of $\cL_{\lambda,T}$.
\end{theorem}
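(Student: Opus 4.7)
The plan is to establish equality of the two minima by proving both inequalities through explicit constructions, and then use sharpness of these constructions to match minimizers on both sides.

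For the inequality $\min_\mu \Eund_{\lambda,T}(\mu) \le \min_X \cL_{\lambda,T}(X)$, given $X \in \calC_{T_\lambda}$ with finite energy, I set $\mu_t := X(t,\cdot)_\sharp 1$ for $t \in [-T,T]$. Since the integrand $1/\phi_X(X(t,x),t)$ must be finite a.e., the measure $\mu_t$ is purely atomic for a.e.\ $t \in (-T,T)$: any non-atomic part would correspond to a set of $x$ of positive Lebesgue measure on which $\phi_X = 0$. Writing $\mu_t = \sum_i \phi_i(t)\delta_{X_i(t)}$ with $\phi_i(t) = \phi_X(X_i(t),t)$, one obtains $\int_Q \phi_X^{-1}\,dx = \sum_i \phi_i \cdot \phi_i^{-1} = N(t) = \dot P(\mu,t)$ since $d=1$. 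Jensen's inequality on each fiber $\{x : X(t,x) = X_i(t)\}$ gives $\phi_i|\dot X_i|^2 \le \int_{\text{fiber}} |\partial_t X|^2\,dx$, hence $\dotEcin(\mu,t) \le \int_Q |\partial_t X|^2\,dx$. Finally, Cauchy--Schwarz on $[T, T_\lambda]$ together with $X(T_\lambda, x) = x$ yields $|X(T,x) - x|^2_{\per} \le \lambda^{-1}\int_T^{T_\lambda} |\partial_t X|^2\,dt$; integrating in $x$ and using that $X(T,\cdot)_\sharp 1 = \mu_T$ furnishes a transport plan, one gets $\lambda W_{\per}^2(\mu_T, 1) \le \int_T^{T_\lambda}\int_Q |\partial_t X|^2$. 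Summing, $\Eund_{\lambda,T}(\mu) \le \cL_{\lambda,T}(X)$.

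For the reverse inequality, given a symmetric minimizer $\mu$, Proposition \ref{prop:structure} and Lemma \ref{lem:monotone} provide the representation $\mu_t = \sum_i \phi_i(t)\delta_{X_i(t)}$ with a monotone, no-loop branching structure. For $t \in [0,T]$, I partition $Q$ into arcs $\{I_i(t)\}_i$ with $|I_i(t)| = \phi_i(t)$, ordered cyclically as the atoms $X_i(t)$ and compatible with the tree: at each branching time the arcs split (resp.\ merge) in exact correspondence with the atomic masses, in the order inherited from $\Psi_t$. The global rotation of the partition is fixed so that $X(T, x) := X_i(T)$ for $x \in I_i(T)$ realizes the monotone optimal transport from Lebesgue to $\mu_T$ characterized in Section~\ref{sec:preliminars}. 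I then define $X(t, x) := X_i(t)$ for $x \in I_i(t)$ on $[0, T]$, linearly interpolate between $X(T, x)$ and $x$ along minimal geodesics in $Q$ for $t \in [T, T_\lambda]$, and extend to $[-T_\lambda, 0]$ by $X(-t, x) = X(t, x)$ using the symmetry of $\mu$. By construction $\phi_X(X_i(t), t) = \phi_i(t)$ and $\partial_t X$ is constant on each arc, so Jensen is saturated and the kinetic/perimeter identities hold with equality on $[-T, T]$; similarly, linear interpolation saturates Cauchy--Schwarz and $X(T,\cdot)$ being the optimal map forces the $[T, T_\lambda]$ segment to contribute exactly $\lambda W_{\per}^2(\mu_T, 1)$. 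Hence $\cL_{\lambda,T}(X) = \Eund_{\lambda,T}(\mu)$, which combined with the first inequality yields equality of the minima and the claimed correspondence of minimizers.

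The main technical difficulty lies in the Eulerian-to-Lagrangian construction: one must build the arcs $\{I_i(t)\}$ consistently across all branching events while preserving monotonicity, and verify that $(t, x) \mapsto X(t, x)$ is absolutely continuous in $t$ for a.e.\ $x$. This relies crucially on the one-dimensional ambient space through Lemma \ref{lem:monotone}, which supplies a canonical cyclic ordering of subsystems at every branching point; no analogous ordering is available in higher dimensions. A subsidiary but essential point is the use of the rotational degree of freedom in the partition to make $X(T,\cdot)$ realize the optimal transport map: this is precisely what allows the boundary penalty $\lambda W_{\per}^2(\cdot, 1)$ to be absorbed into a purely kinetic cost on $[T, T_\lambda]$, which is the whole point of the Lagrangian reformulation.
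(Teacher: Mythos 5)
Your proposal is correct and follows essentially the same route as the paper: the first inequality via pushforward, fiberwise Jensen (the paper uses the dual formulation of the kinetic energy, which is equivalent) and Cauchy--Schwarz on $[T,T_\lambda]$; the second via the monotone, tree-compatible maps of Lemma \ref{lem:monotone} composed with the optimal map from Lebesgue to $\mu_T$, with linear interpolation on $[T,T_\lambda]$ --- your arcs $I_i(t)$ are exactly the preimages under $\Psi_t\circ\Psi'$ in the paper's formula \eqref{consX}. The only point treated more carefully in the paper is the behaviour at $t=T$ (where $\mu_T$ need not be atomic): one defines $X$ on $[0,T)$ and checks continuity at $T$ via the $1/2$-H\"older estimate $W_{\per}^2(\mu_T,\mu_t)\le |T-t|\,I(\mu)$, which is the technical step you flag but do not spell out.
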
 
\begin{proof}
	Let $X\in \calC_{T_\lambda}$ be such that  $\cL_{\lambda,T}(X)<\infty$. Let $\mu_t:=X(t,\cdot)_\sharp 1$ for $t\in[-T,T]$. Fix $t\in (-T,T)$ such that 
	\begin{equation}\label{finitephi}
	 \int_Q \frac{1}{\phi_X(X(t,x),t)} dx<\infty.
	\end{equation}
	Notice first that there is an at most countable collection of points $X_i\in Q$ such that $\phi_{X}(X_i,t)>0$. Moreover, from \eqref{finitephi}, for a.e. $x\in Q$, $\phi_X(X(x,t),t)>0$ and thus $X(x,t)=X_i$ for some $i$. Letting 
	\[
	 A_i=\{ x\in Q \ : \ X(x,t)=X_i\} \qquad \textrm{and} \qquad \phi_i=|A_i|
	\]
we have $\phi_X(X_i,t)=\phi_i$ and 
\[
 \int_Q \frac{1}{\phi_X(X(t,x),t)} dx=\sum_i \int_{A_i} \frac{1}{\phi_X(X(t,x),t)} dx=\sum_{\phi_i>0} 1.
\]
Therefore there are actually only $N(t)=\sharp\{\varphi_i(t)\ne0\}<\infty$ such points $X_i$ and 
\begin{equation}\label{Lagpoints}
 \int_{-T}^T \int_Q \frac{1}{\phi_X(X(t,x),t)} dx dt=\int_{-T}^T N(t) dt.
\end{equation}
We claim that $\mu_t=\sum_{i=1}^{N(t)} \phi_i \delta_{X_i}$. Indeed, for every $\xi\in C(Q)$,
\[
 \int_Q \xi d\mu_t=\int_Q  \xi(X(t,x)) dx=\sum_{i=1}^{N(t)} \int_{A_i} \xi(X_i) dx=\sum_{i=1}^{N(t)} \phi_i \xi(X_i).
\]
We then define for $t\in (-T,T)$ the measure $m_t\in \M(Q,\R)$ by its action on $\xi\in C(Q)$: 
\[
 \int_Q \xi dm_t=\int_Q \xi(X(t,x)) \partial_t X(t,x) dx.
\]
We have $m\ll \mu$ and $\partial_t \mu + \partial_x m=0$ so that $(\mu,m)\in \calA_T$. Using \cite[Proposition 5.18]{Sant}, we find
\begin{align*}
 \frac{1}{2}\int_{Q_T} \lt|\frac{dm}{d\m}\rt|^2 d\mu&=\sup_{\xi\in C(Q_T)} \int_{Q_T} \xi dm-\frac{|\xi|^2}{2} d\mu\\
 &=\sup_{\xi\in C(Q_T)} \int_{Q_T} \lt[\xi(X,t) \partial_t X -\frac{|\xi(X,t)|^2}{2}\rt] dxdt\\
 &\le \int_{Q_T} \sup_{\xi\in \R} \lt[\xi \partial_t X -\frac{|\xi|^2}{2}\rt] dxdt\\
 &=\frac{1}{2}\int_{Q_T} |\partial_t X|^2 dxdt.
\end{align*}
In combination with \eqref{Lagpoints} this gives 
\begin{equation}\label{upperILag}
  \int_{Q_T} |\partial_t X|^2+\frac{1}{\phi_X(X,t)} dx dt\ge I(\mu).
\end{equation}
We now consider the boundary part of the energy. For every $x\in Q$,
	\[
	 \int_{T}^{T_\lambda} |\partial_t X(t,x)|^2 dt\ge \frac{1}{T_\lambda-T}\nper{X(T_\lambda,x)-X(T,x)}^2=\lambda\nper{x-X(T,x)}^2.
	\]
Therefore, recalling that $\mu_T=X(T,\cdot)_\sharp 1$ we have
\[
 \int_Q \int_{T}^{T_\lambda} |\partial_t X|^2 dt dx\ge \lambda W^2_{\per}(\mu_T,1).
\]
Similarly
	\[
	 \int_Q\int_{-T_\lambda}^{-T}|\partial_t X|^2 dt dx\ge \lambda W^2_{\per}(\mu_{-T},1).
	\]
	Combining this with \eqref{upperILag} concludes the proof of 
	\[
	 \cL_{\lambda,T}(X)\ge \Eund_{\lambda,T}(\mu).
	\]

	We now prove the opposite inequality. Let $\mu\in \calA^*_T$ be a symmetric minimizer of $\Eund_{\lambda,T}$.  By symmetry, we will give the construction only for  $t\in [0,T_\lambda]$. As usual we write $\mu_t=\sum_{i=1}^{N(t)} \phi_i \delta_{X_i}$ where $N(t)$ is increasing. Let $\Psi'$ be the optimal transport map for $W_{\per}(1,\mu_T)$. We can then see $\Psi'$ as a monotone map from $\R\to \R$ with $\Psi'-x$ periodic. For $t\in [0,T)$, let $\Psi_t$ be the map given by Lemma \ref{lem:monotone}. We then set
	\begin{equation}
	 X(t,x)=\begin{cases}\label{consX}
	         \Psi_t(\Psi'(x)) & \textrm{ if } t\in [0,T)\\
	         \lambda \lt[ (t-T)x+(T_\lambda-t) \Psi'(x)\rt] & \textrm{ if } t\in [T, T_\lambda].
	        \end{cases}
	\end{equation}
By definition, $X(T_\lambda,x)=x$ and for every $t\in [0,T_\lambda]$, the map $x\mapsto X(t,x)-x$ is periodic so that  we can consider $X(t,\cdot)$ as a map from $Q$ to $Q$. We now prove that for a.e. $x\in Q$, the curves $t\mapsto X(t,x)$ are AC. To prove this it is enough to prove that for a.e. $x\in Q$, $X(t,x)$ is continuous at $t=T$, and that
\begin{equation}\label{kineticupeuler}
 \int_Q\int_0^T |\partial_t X(t,x)|^2 dt dx= \int_0^T \sum_{i=1}^{N(t)} \phi_i |\dot{X}_i|^2 dt\quad \textrm{and } \quad \int_{Q}\int_{T}^{T_\lambda} |\partial_t X(t,x)|^2 dtdx= \lambda W_{\per}^2(\mu_T,1).
\end{equation}
On the one hand, by definition of $X$, we have for a.e. $x\in Q$
\[
 \lim_{t\dw T} X(t,x)=\Psi'(x).
\]
On the other hand, for $t\in[0,T)$, since $\Psi'_\sharp 1=\mu_T$ and $(\Psi_t)_\sharp \mu_T=\mu_t$ (thus $X(t,\cdot)_\sharp 1=\mu_t$) we have 
\[
\int_{Q}\nper{X(t,x)-\Psi'(x)}^2 dx= \int_{Q}\nper{\Psi_t-x}^2 d\mu_T= W_{\per}^2(\mu_T,\mu_t)\stackrel{\eqref{Ecinlowerintern}}{\le} |t-T|I(\mu).
\]
Therefore, for a.e. $x\in Q$,
\[
 \lim_{t\up T} X(t,x)=\Psi'(x).
\]
This concludes the first part of the claim. We now prove \eqref{kineticupeuler}. Since $\Psi'$ is the optimal transport map for   $W_{\per}(1,\mu_T)$, the definition of $X$ for $t\in [T,T_\lambda]$ directly gives the second equality in the statement.
To prove the first equality we notice that  and it is enough to consider an interval $(t_-,t_+)$ on which $\mu_t$ does not have branching points and prove that
\begin{equation}\label{toproveequiv}
 \int_{t_-}^{t_+}\int_Q |\partial_t X|^2 dx dt=\int_{t_-}^{t_+} \sum_{i=1}^N  \phi_i|\dot{X}_i|^2 dt.
\end{equation}
On such an interval, by the compatibility of $\Psi_t$ with the tree structure of $\mu$ we have that for $\mu_T$ a.e. $x\in Q$, 
there exists $X_i$ such that $\Psi_t(x)=X_i$ for $t\in (t_-,t_+)$. This implies that for a.e. $x\in Q$, there exists $X_i$ such that $X(t,x)=X_i$ for $t\in (t_-,t_+)$. Since $X(t,\cdot)_\sharp 1=\mu_t$ we deduce that 
\begin{equation}\label{identif}
 \partial_t X(t,x)=\dot{X}_i \qquad \textrm{and} \qquad \phi_X(X,t)=\phi_i.
\end{equation}
This yields \eqref{toproveequiv}. Notice also  that \eqref{identif} also gives
\begin{equation}\label{identifperim}
 \int_{0}^{T}\int_Q \frac{1}{\phi_X(X,t)} dx dt=\int_{0}^{T} N(t) dt. 
\end{equation}
In conclusion, $X\in \calC_{T_\lambda}$ and by \eqref{kineticupeuler} combined with \eqref{identifperim},
\[
 \cL_{\lambda,T}(X)= \Eund_{\lambda,T}(\mu).
\]
\end{proof}
\begin{remark}\label{rem:monotone}
 {\rm 
 Let us point out that as a consequence of the no-loop condition, if $X$ is a symmetric minimizer of $\cL_{\lambda,T}$ then for every $t\in (-T_\lambda,T_\lambda)$, $X(t,\cdot)$ is a monotone map, see \eqref{consX}.
 }
\end{remark}

\begin{remark}\label{remark:barycenter}{\rm 
For $s\in[0,1]$, let us consider the barycenter problem
\begin{equation}\label{barycenteralter}
 \min_{\nu\in\M^+(Q)} (1-s)W^2_{\per}(1,\nu)+sW^2_{\per}(\nu,\mu).
\end{equation}
		Neglecting the interfacial term  in the previous discussion, we obtain a similar representation
		\[
		\min_{\nu\in\M^+(Q)} (1-s)W^2_{\per}(1,\nu)+sW^2_{\per}(\nu,\mu)=\min_{X(0,x)=x, X(1,\cdot)_\sharp 1=\mu} \int_0^1\int_Q |\partial_t X|^2 dx dt. 
		\]
		Moreover, any solution of \eqref{barycenteralter} is given by $X(s,\cdot)_\sharp 1$ where $X$ is a minimizing family of curves.
		Since 
		\[
		 \int_0^1 |\partial_t X|^2 dt\ge \nper{X(1,x)-X(0,x)}^2
		\]
we have that 
\[
 \min_{X(0,x)=x, X(1,\cdot)_\sharp 1=\mu} \int_0^1\int_Q |\partial_t X|^2 dx dt=W_{\per}^{2}(1,\mu)
\]
and if $\Psi$ is the optimal transport map for $W_{\per}^2(1,\mu)$, the optimal trajectories are given by $X(t,x)=(1-t)x+t \Psi(x)$. This shows that the solution to \eqref{barycenteralter} is given by McCann's interpolant $\nu=X(s,\cdot)_\sharp 1$. This proof of course also works in the Euclidean case and can be extended to more general starting measure than the Lebesgue measure (when optimal transport maps do not exist, one needs to work with probability measures on the space of curves, see for instance \cite[Proposition 5.31]{Sant}). This gives an alternative proof of    \cite[Section 6.2]{AC}, where the barycenter problem (for the much more complicated case of an arbitrary number of measures) has  been introduced first. }
\end{remark}

A crucial consequence of the Lagrangian formulation is what we call the {\em cone property} of minimizers. In order to state it we need to introduce some notation.  Let $X$ be a symmetric minimizer of $ \cL_{\lambda,T}$ and $\mu_t=X(t,\cdot)_\sharp1$ the corresponding minimizer of $\Eund_{\lambda,T}$. For every $t\in(-T,T)$ we can consider $\mu_t$ as a periodic measure on $\R$ (correspondingly $X-x$ as a periodic family of curves) which writes as 
\[
 \mu_t=\sum_{i\in \Z} \phi_i \delta_{X_i}
\]
and we can assume without loss of generality that the points $X_i$ are indexed in increasing order i.e. $X_i<X_{i+1}$. We then let for $t\in [0,T)$,
\begin{equation}\label{defAi}
 A_i(t)=\{ x\in \R \ : \  X(t,x)=X_i\}.
\end{equation}
Since $X(t,\cdot)$ is monotone, see Remark \ref{rem:monotone}, we have for some $x_{i,t}^\pm\in\R$
\[
 A_i(t)=[x_{i,t}^-,x_{i,t}^+) \qquad \textrm{with} \qquad x_{i,t}^+=x_{i+1,t}^-.
\]
We then set for $s\in [t,T_\lambda]$,
\begin{equation}\label{defcone}
 x_{i,t}^{\pm}(s)=\frac{s-t}{T_\lambda-t} x_{i,t}^{\pm}+ \frac{T_\lambda-s}{T_\lambda-t}X_i.
\end{equation}
\begin{remark}
 Let us point out that $X(s,\cdot)_{\sharp} \chi_{A_i(t)}$ for $s\in [t,T)$ coincides with the forward subsystem starting from $X_i(t)$.
\end{remark}

The cone property states that if $x\in A_i(t)$ then for $s\in (t,T_\lambda)$, the point $X(s,x)$ lies in the cone in time-space spanned by the vertex $(t,X_i)$ and the segment $(T_\lambda, A_i(t))$, see  Figure \ref{DGRfig2}.
\begin{figure}
	\includegraphics[scale=.9]{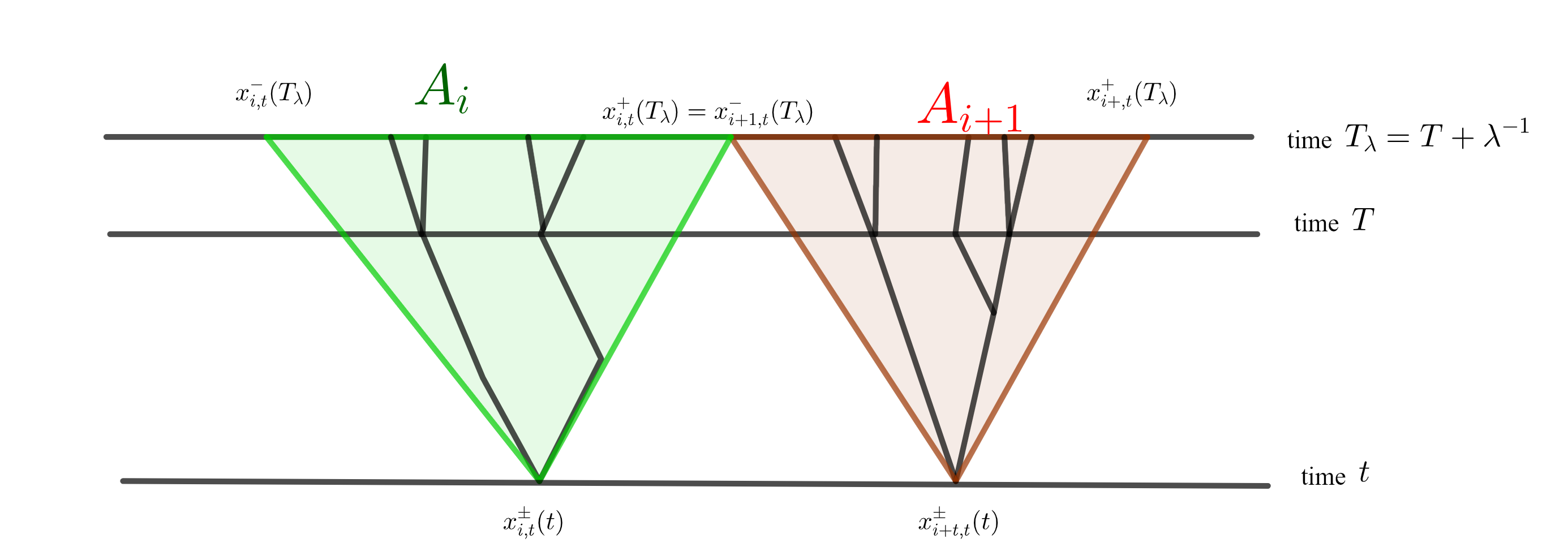}
	\caption{The cone property}\label{DGRfig2}
\end{figure}
\begin{proposition}\label{propcone}
 Let $X$ be a symmetric minimizer of $ \cL_{\lambda,T}$. Then, for every $t\in [0,T)$ and every $i\in \Z$,
 \begin{equation}\label{eq:cone}
  X(s,x)\in [x_{i,t}^{-}(s),x_{i,t}^{+}(s)] \qquad \textrm{ for every } x\in A_i(t), \ s\in [t,T_\lambda].
 \end{equation}
% Moreover, for every $i\in \Z$,
% \begin{equation}\label{barycenter}
%  A_i(0)= X_i +[-\phi_i/2,\phi_i/2).
% \end{equation}

\end{proposition}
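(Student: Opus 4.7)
The plan is to argue by contradiction and construct a competitor $\tilde X\in\calC_{T_\lambda}$ with $\cL_{\lambda,T}(\tilde X)<\cL_{\lambda,T}(X)$. By symmetry between the two sides of the cone it suffices to treat the right boundary, so I suppose there exist $x_0\in A_i(t)$ and $s_0\in(t,T_\lambda)$ with $X(s_0,x_0)>x_{i,t}^+(s_0)=:L(s_0)$, where $L\colon[t,T_\lambda]\to\R$ is the affine line joining $(t,X_i)$ and $(T_\lambda,x_{i,t}^+)$. I then define
\[
\tilde X(s,x):=\min(X(s,x),L(s))\quad\text{for } x\in A_i(t),\ s\in[t,T_\lambda],
\]
and $\tilde X=X$ elsewhere, with the symmetric extension on $[-T_\lambda,-t]$. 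The boundary conditions $\tilde X(\pm T_\lambda,x)=x$ are preserved since $A_i(t)\subseteq[x_{i,t}^-,x_{i,t}^+]$ gives $\min(x,x_{i,t}^+)=x$, while $\tilde X(t,x)=\min(X_i,X_i)=X_i$; absolute continuity is inherited from $X$ and $L$, so indeed $\tilde X\in\calC_{T_\lambda}$.

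For the kinetic part, I fix $x\in A_i(t)$ and decompose $[t,T_\lambda]$ into the maximal open intervals on which $X(\cdot,x)>L$. On any such interval $(a,b)$ continuity forces $X(a,x)=L(a)$ and $X(b,x)=L(b)$, while $\tilde X\equiv L$ on $(a,b)$. Cauchy--Schwarz then gives
\[
\int_a^b|\partial_s\tilde X|^2\,ds=\frac{(L(b)-L(a))^2}{b-a}\le\int_a^b|\partial_sX|^2\,ds,
\]
with strict inequality unless $X\equiv L$ on $(a,b)$. Monotonicity of $X(s,\cdot)$ (Remark~\ref{rem:monotone}) propagates the contradiction hypothesis at $(s_0,x_0)$ to a positive-measure subset of $[t,T_\lambda]\times A_i(t)$ where $X>L$, so integration yields a \emph{strict} decrease of the kinetic term.

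The perimeter comparison is the main technical step. At each time $s\in[t,T]$, the only change is that the $A_i(t)$-mass sitting at positions $Y>L(s)$ is projected onto $L(s)$. The essential ingredient will be the no-loop property: by Lemma~\ref{lem:nolopp} (together with the finite-forest description given by Proposition~\ref{prop:structure} and Remark~\ref{rem:noloopsym}) the forward subsystems $\mu^{t,j}_s$ are mutually singular for distinct $j$, hence every position $Y>L(s)$ carrying $A_i(t)$-mass carries \emph{only} $A_i(t)$-mass. Consequently each such position becomes unoccupied under $\tilde X$, while at most one new position $L(s)$ is created; the count of branches at time $s$ therefore does not increase. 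On $(T,T_\lambda)$ the perimeter is not penalized, so nothing needs to be checked there. Combining the strict kinetic decrease with the non-increase of the perimeter yields $\cL_{\lambda,T}(\tilde X)<\cL_{\lambda,T}(X)$, contradicting minimality. The symmetric argument capping from below via $\max(\cdot,x_{i,t}^-(s))$ handles the left boundary of the cone and completes the proof.
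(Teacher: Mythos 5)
Your argument is correct and is essentially the paper's own proof: you build the same competitor by projecting the $A_i(t)$-trajectories onto the cone boundary, use the no-loop/tree structure to see that the branch count (hence the perimeter term on $[t,T]$) cannot increase, and compare kinetic energies on the excursion intervals via Cauchy--Schwarz, with the equality case forcing the trajectory to coincide with the cone boundary there. The only cosmetic differences are that you argue by contradiction with a one-sided projection and use monotonicity of $X(s,\cdot)$ to upgrade a single violation to a positive-measure one, whereas the paper projects onto both sides $[x_{i,t}^-(s),x_{i,t}^+(s)]$ simultaneously and concludes directly from the equality discussion.
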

\begin{proof}
 We fix $t\in [0,T)$ and construct a competitor by projecting the trajectories starting in $A_i(t)$ on $[x^-_{i,t}(s),x^+_{i,t}(s)]$:
 \[
  \what{X}(s,x)=\begin{cases}
                X(s,x) &\textrm{if } s\le t\\
                \max(\min( X(s,x), x_{i,t}^+(s)),x_{i,t}^-(s)) & \textrm{if } s\ge t \textrm{ and } x\in A_i(t). 
               \end{cases}
 \]
 We first notice that since $X(s,\cdot)-x$ is periodic, this construction gives also a periodic $\what{X}(s,\cdot)-x$. Furthermore, by the no-loop property this operation can only decrease the  surface part of the energy. Fix $x\in A_i(t)$. By definition of $x_{i,t}^{\pm}(s)$,  we have  $\what{X}(s,x)=X(s,x)$ for $s\in\{t, T_\lambda\}$. Moreover, since $X(x,\cdot)$ is an AC curve, we  have a partition of  $(t,T_\lambda)$ in countably many intervals as
 \[
  (t,T_\lambda)=\mathcal{I}\cup \left(\cup_j \mathcal{I}^+_j\right)\cup \left( \cup_j \mathcal{I}^-_j\right) 
 \]
where 
\[\what{X}(s,x)=\begin{cases}
                 X(s,x) &\textrm{for } s\in \mathcal{I}\\
                 x_{i,t}^{\pm}(s) &\textrm{for } s\in \mathcal{I}^{\pm}_j. 
                \end{cases}
\]
We thus have 
\[
 \int_{0}^{T_\lambda} |\partial_t \what{X}|^2 -\int_{0}^{T_{\lambda}}|\partial_t X|^2=\sum_j \lt(\int_{\mathcal{I}^+_j}  |\partial_t \what{X}|^2 -\int_{\mathcal{I}^+_j}|\partial_t X|^2\rt)
 +\sum_{j} \lt(\int_{\mathcal{I}^-_j}  |\partial_t \what{X}|^2 -\int_{\mathcal{I}^-_j}|\partial_t X|^2\rt).
\]
Let us prove that for every $j$,
\begin{equation}\label{toprovecone}
 \int_{\mathcal{I}^{\pm}_j}  |\partial_t \what{X}|^2 \le\int_{\mathcal{I}^{\pm}_j}|\partial_t X|^2
\end{equation}
with equality only if $X(s,x)= x_{i,t}^{\pm}(s)$ in $\mathcal{I}^{\pm}_j$. This would conclude the proof of \eqref{eq:cone}. For definiteness we consider an interval $\mathcal{I}^+_j=(s_1,s_2)$. Since $\what{X}(s,x)=X(s,x)$ for $s\in\{t, T_\lambda\}$ we  also have $\what{X}(s,x)=X(s,x)=x_{i,t}^+(s)$ for $s\in\{s_1,s_2\}$. Therefore,
\[
 \int_{s_1}^{s_2} |\partial_t \what{X}|^2 =\frac{|x_{i,t}^+(s_2)-x_{i,t}^+(s_1)|^2}{s_2-s_1}=\frac{|X(s_2,x)-X(s_1,x)|^2}{s_2-s_1}\le \int_{s_1}^{s_2} |\partial_t X|^2,
\]
with equality only if $X(s,x)=x_{i,t}^+(s)$ for $s\in [s_1,s_2]$.
This concludes the proof of \eqref{toprovecone}.\\

% We now turn to \eqref{barycenter}. For every $i\in \Z$, since $|A_i(0)|=\phi_i$, there exists $Y_i$ such that 
% \[
% A_i(0)= Y_i + X_i +[-\phi_i/2,\phi_i/2). 
% \]
% We then construct a competitor by setting for $x\in A_i(0)$ and $t\in[0,T_\lambda]$,
% \[
%  \what{X}(t,x)=X(t,x) + \frac{t-T_\lambda}{T_\lambda} Y_i. 
% \]
% This leaves the surface part of the energy unchanged  and using that 
% \[
% \int_{A_i(0)}\int_{0}^{T_\lambda}\partial_t X=\int_{A_i(0)} X (T_\lambda,x)-X(0,x)dx=\phi_i Y_i
% \]
% we have 
% \begin{multline*}
%  \int_{A_i(0)}\int_{0}^{T_\lambda} |\partial_t \what{X}|^2=\int_{A_i(0)}\int_{0}^{T_\lambda} |\partial_t X|^2 +\frac{2}{T_\lambda} Y_i \int_{A_i(0)}\int_{0}^{T_\lambda}\partial_t X + \frac{\phi_i}{T_\lambda}Y_i^2\\
%  =\int_{A_i(0)}\int_{0}^{T_\lambda} |\partial_t X|^2-\frac{\phi_i}{T_\lambda}Y_i^2.
% \end{multline*}
% Since $X$ is a minimizer this implies $Y_i=0$ and thus \eqref{barycenter}.
\end{proof}
\begin{remark}
 Let us point out that applying Proposition \ref{propcone} with $t=0$ and $s=T$ already guarantees that the measure $\mu_T= X(T,\cdot)_\sharp 1$ is not the Lebesgue measure.
\end{remark}

We now define the energy of connecting one dirac mass at time $0$ to the corresponding Lebesgue measure at time $T_\lambda$. For $\Phi, T>0$ and $\bar X\in \R$ we set 
\begin{multline*}
 \calC^{\bar X}_{T,\Phi}=\{ X: [0,T]\times [-\Phi/2,\Phi/2]\to \R \ : \ X(0,x)=\bar X,\  X(T,x)=x,\\
 \textrm{ and for a.e. } x, \ t\mapsto X(t,x) \textrm{ is AC}\}.
\end{multline*}
Recalling that $T_\lambda=T+ \lambda^{-1}$, we then set for $X\in \calC^{\bar X}_{T_\lambda,\Phi}$,
\begin{equation}\label{lagrangianDirac}
 \cL_{\lambda,T,\Phi}(X)=\int_{0}^T\int_{-\frac{\Phi}{2}}^{\frac{\Phi}{2}} \frac{1}{\phi_X(X(t,x),t)} dx dt+\int_{0}^{T_\lambda}\int_{-\frac{\Phi}{2}}^{\frac{\Phi}{2}} |\partial_t X|^2 dx dt,
\end{equation}
where the multiplicity $\phi_X$ is defined as in \eqref{defphiX}. We then define

\begin{equation*}%\label{def:Ephitlamb}
 E(\Phi,T,\lambda,\bar X)=\min_{X\in \calC_{T_\lambda,\Phi}^{\bar X}} \cL_{\lambda,T,\Phi}(X), \quad E(\Phi,T,\lambda)=E(\Phi,T,\lambda,0) \quad \textrm{and} \quad E(T,\lambda)=E(1,T,\lambda).
\end{equation*}
We first point out the following scaling properties of $E(\Phi,T,\lambda,\bar X)$ (see also \cite[Lemma 3.1]{G}).
\begin{lemma}\label{lem:rescalingE}
 Let $\Phi,T,\lambda,r>0$ and $\bar X\in \R$. We have on the one hand
 \begin{equation}\label{eq:movebarX}
  E(\Phi,T,\lambda,\bar X)= E(\Phi, T,\lambda) +\frac{\Phi}{T_\lambda} |\bar X|^2
 \end{equation}
and if $X$ is a minimizer for $E(\Phi, T,\lambda)$, then
\[
 \what{X}(t,x)= X(t,x) +\frac{T_\lambda-t}{T_\lambda} \bar X
\]
is a minimizer for $ E(\Phi,T,\lambda,\bar X)$. On the other hand,
 \begin{equation}\label{eq:rescalingE}
  E(\Phi,T,\lambda)=rE(r^{-\frac{2}{3}} \Phi, r^{-1}T, r\lambda)
 \end{equation}
and if $X$ is a minimizer of $E(\Phi,T,\lambda)$ then 
\[
 \what{X}(x,t)=r^{-\frac{2}{3}}X(r^{\frac{2}{3}}x, rt)
\]
is a minimizer of $E(r^{-\frac{2}{3}} \Phi, r^{-1}T, r\lambda)$. 
% As a consequence,
% \[
%  E(\Phi,T,\lambda)=\Phi^{3/2} E(\Phi^{-3/2} T, \Phi^{3/2}\lambda).
% \]
\end{lemma}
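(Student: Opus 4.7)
The plan is to verify both identities by explicit substitution: \eqref{eq:movebarX} by an affine translation in $x$, and \eqref{eq:rescalingE} by a parabolic rescaling in $(t,x)$. In each case the map sends the admissible class to the correct admissible class and transforms the two terms in $\cL_{\lambda,T,\Phi}$ by explicit factors.

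For \eqref{eq:movebarX}, given $X\in\calC_{T_\lambda,\Phi}^{0}$, set $\what X(t,x):=X(t,x)+\tfrac{T_\lambda-t}{T_\lambda}\bar X$. Then $\what X(0,x)=\bar X$ and $\what X(T_\lambda,x)=x$, so $\what X\in\calC_{T_\lambda,\Phi}^{\bar X}$. Translating all trajectories by the same (time-dependent) vector preserves coincidences, so $\phi_{\what X}(\what X(t,x),t)=\phi_X(X(t,x),t)$ and the perimeter term is unchanged. Expanding $|\partial_t\what X|^2=|\partial_tX|^2-\tfrac{2}{T_\lambda}\bar X\,\partial_t X+\tfrac{|\bar X|^2}{T_\lambda^{2}}$ and integrating, the cross term reads
\[
-\frac{2\bar X}{T_\lambda}\int_{-\Phi/2}^{\Phi/2}\!\bigl(X(T_\lambda,x)-X(0,x)\bigr)\,dx
=-\frac{2\bar X}{T_\lambda}\int_{-\Phi/2}^{\Phi/2}\!x\,dx=0
\]
by symmetry of the interval (using $X(0,\cdot)\equiv0$ and $X(T_\lambda,x)=x$), while the last term integrates to $\tfrac{\Phi}{T_\lambda}|\bar X|^2$. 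Hence $\cL_{\lambda,T,\Phi}(\what X)=\cL_{\lambda,T,\Phi}(X)+\tfrac{\Phi}{T_\lambda}|\bar X|^2$, yielding $E(\Phi,T,\lambda,\bar X)\le E(\Phi,T,\lambda)+\tfrac{\Phi}{T_\lambda}|\bar X|^2$. The reverse inequality, together with the minimizer statement, follows from the inverse map $X(t,x):=\tilde X(t,x)-\tfrac{T_\lambda-t}{T_\lambda}\bar X$ applied to any minimizer $\tilde X$ of the right-hand side, for which the same computation gives $\cL_{\lambda,T,\Phi}(X)=\cL_{\lambda,T,\Phi}(\tilde X)-\tfrac{\Phi}{T_\lambda}|\bar X|^2$.

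For \eqref{eq:rescalingE}, I would set $\what X(t,x):=r^{-2/3}X(rt,r^{2/3}x)$ (up to the harmless transposition of arguments in the statement). The key preliminary observation is that
\[
(r^{-1}T)_{r\lambda}=r^{-1}T+(r\lambda)^{-1}=r^{-1}T_\lambda,
\]
so $\what X$ is defined for $t\in[0,r^{-1}T_\lambda]$; moreover $\what X(0,x)=0$ and $\what X(r^{-1}T_\lambda,x)=r^{-2/3}X(T_\lambda,r^{2/3}x)=x$, hence $\what X\in\calC_{r^{-1}T_\lambda,\,r^{-2/3}\Phi}^{0}$. A direct computation shows that the multiplicity transforms as $\phi_{\what X}(y,t)=r^{-2/3}\phi_X(r^{2/3}y,rt)$, and that $|\partial_t\what X(t,x)|^2=r^{2/3}|\partial_t X(rt,r^{2/3}x)|^2$. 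Performing the change of variables $s=rt$, $y=r^{2/3}x$ (whose Jacobian is $r^{5/3}$) in both terms of $\cL_{r\lambda,r^{-1}T,r^{-2/3}\Phi}(\what X)$, each term picks up a factor $r^{2/3}\cdot r^{-5/3}=r^{-1}$, so that
\[
\cL_{r\lambda,r^{-1}T,r^{-2/3}\Phi}(\what X)=r^{-1}\,\cL_{\lambda,T,\Phi}(X).
\]
Taking the infimum gives $E(r^{-2/3}\Phi,r^{-1}T,r\lambda)\le r^{-1}E(\Phi,T,\lambda)$, and the opposite inequality together with the minimizer statement follows from the inverse rescaling $X(t,x):=r^{2/3}\what X(r^{-1}t,r^{-2/3}x)$ applied to a minimizer $\what X$ of the rescaled problem.

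There is no real obstacle: once the correct maps are written down the verification is routine bookkeeping. The only points requiring care are the vanishing of the cross term in the translation (which uses both the fixed starting point $X(0,\cdot)\equiv 0$ and the symmetry of $[-\Phi/2,\Phi/2]$), the identity $(r^{-1}T)_{r\lambda}=r^{-1}T_\lambda$ so that admissible classes match up under rescaling, and the fact that the two terms in $\cL_{\lambda,T,\Phi}$ are individually homogeneous of the same degree $-1$ under the parabolic scaling $(t,x)\mapsto(rt,r^{2/3}x)$ combined with $X\mapsto r^{-2/3}X$, which is precisely why this particular scaling works.
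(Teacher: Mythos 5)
Your proof is correct and follows the same route as the paper, which simply declares the verification immediate from the formulas for $\what{X}$; you have spelled out the bookkeeping (vanishing of the cross term via $X(0,\cdot)\equiv 0$, $X(T_\lambda,x)=x$ and the symmetry of $[-\Phi/2,\Phi/2]$, the identity $(r^{-1}T)_{r\lambda}=r^{-1}T_\lambda$, and the common homogeneity $r^{-1}$ of both terms), all of which checks out.
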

\begin{proof}
 The proof is immediate using the formulas defining $\what{X}$.
\end{proof}
 A first important consequence of Proposition \ref{propcone} is that if $\mu$ is a minimizer of $\cL_{\lambda,T}$ then every forward subsystem is a minimizer of some $E(\Phi,T,\lambda,\bar X)$.
 \begin{proposition}\label{prop:reduce}
  Let $\mu$ be a symmetric  minimizer of $\Eund_{\lambda,T}$ and let $X$ be the corresponding minimizer of $\cL_{\lambda,T}$. Fix $t\in [0,T)$ and write $\mu_t=\sum_{i} \phi_i \delta_{X_i}$. Then,  recalling the definition \eqref{defAi} of $A_i(t)$ and setting for $s\in[0,(T-t)_\lambda]$,
  \[
   \bar{X}_i=\frac{1}{\phi_i}\int_{A_i(t)} x dx, \qquad \what{X}_i(s,x)= X(s+t,x+\bar X_i)- \bar X_i,
  \]
we have that $\what{X}_i\in \calC^{X_i-\bar X_i}_{(T-t)_\lambda,\phi_i} $ is a minimizer of $E(\phi_i,T-t,\lambda, X_i-\bar{X}_i)$. As a consequence, 
\[
 \min_{\calC_{T}}\cL_{\lambda,T}(X)=2\min_{\sum_i \phi_i=1}E(\phi_i,T,\lambda)
\]
and if $\mu_0=\sum_i \phi_i \delta_{X_i}$, we have $X_i=\bar X_i$ for every $i$.
 \end{proposition}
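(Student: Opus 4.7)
The plan is to combine the cone property (Proposition \ref{propcone}) with the monotonicity (Remark \ref{rem:monotone}) and the no-loop condition to decompose the Lagrangian energy $\cL_{\lambda,T}(X)$ into a sum of contributions indexed by the atoms of $\mu_t$, then obtain minimality of each piece by a one-parameter competitor argument, and finally combine this with \eqref{eq:movebarX} to deduce the global reduction.

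First, the admissibility of $\what{X}_i$ is immediate: monotonicity of $X(t,\cdot)$ forces $A_i(t)$ to be an interval of length $\phi_i$ whose midpoint is precisely its barycenter $\bar{X}_i$, so $A_i(t) = \bar{X}_i + [-\phi_i/2, \phi_i/2]$. Then $\what{X}_i(0,x) = X(t, x+\bar{X}_i) - \bar{X}_i = X_i - \bar{X}_i$ for $x \in [-\phi_i/2, \phi_i/2]$, and $\what{X}_i((T-t)_\lambda, x) = X(T_\lambda, x+\bar{X}_i) - \bar{X}_i = x$, proving that $\what{X}_i \in \calC^{X_i - \bar{X}_i}_{(T-t)_\lambda, \phi_i}$. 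By symmetry, I only discuss the part of $\cL_{\lambda,T}(X)$ on $[t, T_\lambda] \times Q$. Partitioning the $x$-variable according to the sets $A_i(t)$, this piece equals
$$
\sum_i \int_t^{T_\lambda}\!\!\int_{A_i(t)} |\partial_s X|^2\,dx\,ds + \sum_i \int_t^T\!\!\int_{A_i(t)} \frac{1}{\phi_X(X(s,x),s)}\,dx\,ds.
$$
By the cone property, monotonicity of $X(s,\cdot)$, and the no-loop condition, the images $X(s, A_i(t))$ for distinct $i$ are pairwise disjoint up to a negligible set, hence for a.e.\ $x \in A_i(t)$ and $s\in[t,T]$ one has $\phi_X(X(s,x),s) = |\{x'\in A_i(t) \,:\, X(s,x')=X(s,x)\}|$. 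Changing variables $u=s-t$ and $y=x-\bar{X}_i$ then identifies the above expression with $\sum_i \cL_{\lambda, T-t, \phi_i}(\what{X}_i)$.

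Next, I prove minimality of each $\what{X}_i$ by contradiction. Fix $i$ and let $Y_i \in \calC^{X_i - \bar{X}_i}_{(T-t)_\lambda, \phi_i}$ be an arbitrary competitor. Define $\tilde X$ to agree with $X$ on $[-t,t]\times Q$ and on $[t, T_\lambda] \times A_j(t)$ for $j \neq i$, while on $[t, T_\lambda] \times A_i(t)$ we set $\tilde X(s,x) = Y_i(s-t, x-\bar{X}_i) + \bar{X}_i$, then extend symmetrically on $[-T_\lambda,-t]$. The admissibility computation in the first paragraph ensures continuity at $s=t$ and $s=T_\lambda$, and since the cones spanned by distinct $(t,X_j)$ and $A_j(T_\lambda)$ remain well-separated (so that $Y_i$ cannot create new collisions with the untouched trajectories), the energy of $\tilde X$ splits analogously to that of $X$. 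Minimality of $X$ forces $\cL_{\lambda,T-t,\phi_i}(Y_i) \ge \cL_{\lambda,T-t,\phi_i}(\what{X}_i)$ for every admissible $Y_i$, so $\what{X}_i$ minimizes $E(\phi_i, T-t, \lambda, X_i-\bar{X}_i)$.

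Finally, for the consequence, I apply the result at $t=0$. Combining the decomposition with \eqref{eq:movebarX} and the symmetry of $X$ gives
$$
\cL_{\lambda,T}(X) = 2\sum_i \Bigl[ E(\phi_i, T, \lambda) + \frac{\phi_i}{T_\lambda}|X_i - \bar{X}_i|^2 \Bigr].
$$
For any weights $\{\phi_i\}$ with $\sum_i\phi_i=1$, one may prescribe a partition of $Q$ into intervals of length $\phi_i$ and place the atoms at their barycenters, so that $X_i = \bar{X}_i$; this achieves the infimum $2\sum_i E(\phi_i, T, \lambda)$. Minimising further over the weights yields $\min_{\calC_T}\cL_{\lambda,T} = 2 \min_{\sum_i \phi_i=1} \sum_i E(\phi_i, T, \lambda)$, and equality in the displayed identity forces $X_i = \bar{X}_i$ for every $i$. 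The main obstacle in executing this plan is the rigorous justification of the decoupling $\phi_X(X(s,x),s) = |\{x'\in A_i(t): X(s,x')=X(s,x)\}|$: it needs the cone property and monotonicity to prevent cones from different indices from overlapping on a positive measure set, and uses the no-loop property of symmetric minimizers from Remark \ref{rem:noloopsym} to control potential boundary collisions.
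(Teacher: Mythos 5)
Your proof is correct and follows essentially the same route as the paper's (much terser) argument: decompose the Lagrangian energy over the sets $A_i(t)$ using monotonicity and the cone/no-loop structure, prove minimality of each piece by a cut-and-paste competitor, and invoke \eqref{eq:movebarX} at $t=0$ for the consequence. The only imprecision --- asserting that the glued competitor's energy splits exactly because $Y_i$ ``cannot create new collisions'' with the untouched trajectories --- is harmless, since any overlap only increases $\phi_{\tilde X}$ and hence decreases the perimeter term, so the splitting holds as an inequality in exactly the direction your minimality argument needs.
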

\begin{proof}
 The first part of the statement is a direct consequence of Proposition \ref{propcone}. For the second part we have 
 \[
  \min_{\calC_{T}}\cL_{\lambda,T}(X)=2\min_{\sum_i \phi_i=1}E(\phi_i,T,\lambda, X_i- \bar X_i)\stackrel{\eqref{eq:movebarX}}{=}2\min_{\sum_i \phi_i=1}\lt[E(\phi_i,T,\lambda) +\frac{\phi_i}{T_\lambda} |X_i- \bar X_i|^2\rt],
 \]
which concludes the proof.
\end{proof}

\begin{remark}{\rm 
By the above discussion,  minimizers with either periodic or no boundary condition coincide and thus both the Eulerian formulation and the cone property also hold for minimizers of $E(\Phi,T,\lambda,\bar X)$.}
\end{remark}
By Proposition \ref{prop:reduce} and Lemma \ref{lem:rescalingE}, Theorem \ref{thm:was1} is a direct consequence of the following result.
\begin{proposition}\label{prop:refineddirac}
 For $\Phi, T,\lambda>0$ let $X$ be a minimizer of $E(\Phi,T,\lambda)$. If $t=0$ is a branching point, i.e.  there exists $N\ge 2$ and $\phi_i>0$ with $\sum_{i=1}^N \phi_i=\Phi$ such that there exists a partition of $(-1/2,1/2)=\cup_{i=1}^N A_i$ with $|A_i|=\phi_i$ and
 \[
  X(t,x)\neq X(t,y) \qquad \textrm{for every } t>0 \textrm{ and every } x\in A_i, \ y\in A_j \textrm{ with } i\neq j.
 \]
 then for every $i$,
 \begin{equation}\label{estimbranch}
   \frac{\lambda^2 \Phi^3}{1+\lambda T}\frac{\phi_i}{\Phi}\ges 1.
 \end{equation}
% \[
%   E(\Phi,T,\lambda)=\sum_{i=1}^N E(\phi_i,T,\lambda,-\bar{X}_i),
%  \]
\end{proposition}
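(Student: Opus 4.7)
The plan is to argue by comparison with a competitor that merges the $i$-th subsystem with an adjacent one (indices being read cyclically). Fix $i$ and let $\psi = \phi_i + \phi_{i+1}$ with $\bar X_{i,i+1}$ the barycenter of $A_i \cup A_{i+1}$. I will construct $\tilde X$ by replacing the two subsystems emanating from $A_i, A_{i+1}$ by a single simple one: a single atom travelling along the straight line from $0$ at $t=0$ to $T\bar X_{i,i+1}/T_\lambda$ at $t=T$, followed by linear spreading on $[T,T_\lambda]$ to cover $A_i\cup A_{i+1}$. A direct computation (optimising the intermediate position at $t=T$, which is the interior critical point $y=\lambda\bar X_{i,i+1}/(\lambda+1/T) = T\bar X_{i,i+1}/T_\lambda$) shows the merged subsystem has energy at most
\[
T + \psi\bar X_{i,i+1}^{\,2}/T_\lambda + \lambda\psi^{3}/12.
\]

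To lower bound $E_i + E_{i+1}$, I will use that each subsystem must be atomic on $[0,T]$ (otherwise its perimeter contribution $\int\phi_X^{-1}$ is infinite) in order to establish the ``atomic lower bound''
\[
E_k \geq T + \phi_k\bar X_k^{\,2}/T_\lambda + \lambda\phi_k^{3}/12 \qquad k\in\{i,i+1\},
\]
obtained by the same single-atom optimisation as above applied to the $k$-th subsystem. Summing for $k=i,i+1$ and applying the parallel-axis identity $\phi_i\bar X_i^{\,2} + \phi_{i+1}\bar X_{i+1}^{\,2} = \psi\bar X_{i,i+1}^{\,2} + \phi_i\phi_{i+1}\psi/4$ produces a lower bound whose $\bar X_{i,i+1}^{\,2}$-contribution exactly matches that of the competitor.

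Combining the two estimates via minimality $E_i+E_{i+1}\leq E_{\mathrm{merged}}$, the $\bar X_{i,i+1}^{\,2}$ terms cancel and the identity $\psi^3 - \phi_i^3 - \phi_{i+1}^3 = 3\phi_i\phi_{i+1}\psi$ reduces the inequality to $T + \phi_i\phi_{i+1}\psi/(4T_\lambda) \leq \lambda\phi_i\phi_{i+1}\psi/4$. Using $\lambda - 1/T_\lambda = \lambda^2 T/(1+\lambda T)$ this is equivalent to $\phi_i\phi_{i+1}\psi\,\lambda^{2}/(1+\lambda T) \gtrsim 1$. Since $\phi_{i+1}\leq\Phi$ and $\psi\leq\Phi$, we conclude $\phi_i\Phi^{2}\lambda^{2}/(1+\lambda T)\gtrsim 1$, which is the claimed bound.

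The main technical obstacle is justifying the atomic lower bound $E_k \geq T + \phi_k\bar X_k^{\,2}/T_\lambda + \lambda\phi_k^{3}/12$, which is stronger than the naïve Benamou--Brenier bound $E_k \geq T + (\phi_k\bar X_k^{\,2} + \phi_k^{3}/12)/T_\lambda$ by a factor of order $\lambda T_\lambda$. To close this gap I expect to invoke the cone property (Proposition \ref{propcone}): the $k$-th subsystem is confined to a cone of width $(t/T_\lambda)\phi_k$ at time $t$, so at $t=T$ its support lies in an interval of width at most $(T/T_\lambda)\phi_k$, and spreading out to cover $A_k$ (of width $\phi_k$) in the remaining time $1/\lambda$ must cost of order $\lambda\phi_k^{3}$ regardless of any internal branching structure that might lower the perimeter. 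Turning this geometric picture into a rigorous lower bound -- and in particular ruling out kinetic savings from internal branching in $[0,T]$ -- is the heart of the argument and should occupy most of the proof.
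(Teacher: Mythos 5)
Your reduction and bookkeeping are fine (the merged single-atom competitor does have energy $T+\psi\bar X_{i,i+1}^2/T_\lambda+\lambda\psi^3/12$, and the parallel-axis cancellation would indeed yield \eqref{estimbranch}), but the step you defer as ``the heart of the argument'' is not merely unproven: the atomic lower bound $E_k\ge T+\phi_k\bar X_k^2/T_\lambda+\lambda\phi_k^3/12$ is false in exactly the regimes where the proposition has content. First, your justification ``each subsystem must be atomic on $[0,T]$, otherwise its perimeter contribution is infinite'' confuses \emph{atomic} with \emph{a single atom}: the integrand $1/\phi_X$ only counts the number of atoms, so a subsystem may branch further at positive times at finite perimeter cost. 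Second, by Proposition \ref{prop:reduce} each subsystem of the minimizer is itself a minimizer of $E(\phi_k,T,\lambda,X_k-\bar X_k)$, so by \eqref{eq:movebarX} and Proposition \ref{propstima},
\begin{equation*}
E_k\;=\;E(\phi_k,T,\lambda)+\frac{\phi_k\bar X_k^2}{T_\lambda}\;\sim\;T+\frac{\lambda\phi_k^3}{1+\lambda T}+\frac{\phi_k\bar X_k^2}{T_\lambda},
\end{equation*}
which is much smaller than your claimed bound as soon as $\lambda T\gg1$ and $\lambda\phi_k^3\gg T$ --- precisely a regime where branching (including at $t=0$) occurs, e.g.\ $T$ fixed, $\lambda\to\infty$, $\phi_k$ of order one: then $E_k$ stays bounded while $T+\lambda\phi_k^3/12\to\infty$. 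The cone property cannot rescue the estimate: inside the cone of Proposition \ref{propcone} the subsystem can spread by further branching with transverse velocities of order $\phi_k/T_\lambda$, so the kinetic cost of spreading during $[0,T]$ is only of order $\lambda\phi_k^3\,\lambda T/(1+\lambda T)^2$ and the residual spreading on $[T,T_\lambda]$ only of order $\lambda\phi_k^3/(1+\lambda T)^2$, both far below $\lambda\phi_k^3$; the price is extra perimeter, but your comparison only gains a single unit of $T$ from merging two branches, so it cannot absorb an unbounded number of sub-branches. Hence the scheme, as designed, cannot close.

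For contrast, the paper never lower-bounds subsystem energies sharply. It first collapses \emph{all} branches to a single vertical one on $[0,T]$, gaining at least $T$ in perimeter and losing at most $\lambda\sup_A|X(T,x)|\le\lambda^2T/(1+\lambda T)$ in kinetic energy thanks to the cone estimate \eqref{conseqcone}; this gives the preliminary bound $\lambda^2/(1+\lambda T)\ge1$. It then removes branch $i$ by rescaling the complementary subsystem, $\tilde\mu=\phi^{-1}\mu^2$: the perimeter gain is again $T$, while the kinetic surcharge $\tfrac{\phi_i}{\phi}\Ecin(\mu^2,(0,T))$ is controlled by the a priori bound $\Ecin(\mu^2,(0,T))\les\lambda^2T/(1+\lambda T)$ (obtained from minimality against the competitor of Lemma \ref{lem:consblock} with $r=\lambda T/(1+\lambda T)$), and the change in the boundary Wasserstein terms is bounded by $\phi_i\,\lambda T/(1+\lambda T)$ via the cone property and a triangle--Young argument. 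This yields $T\les\phi_i\,\lambda^2T/(1+\lambda T)$ directly, avoiding any claim about the internal structure of the subsystems. If you want to salvage your merging idea, you would need to replace the atomic lower bound by comparisons of this type, i.e.\ bounds that are insensitive to further branching.
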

\begin{proof}
 By Lemma \ref{lem:rescalingE}, we can assume without loss of generality that $\Phi=1$.  We first prove that
\begin{equation}\label{firstbranchestim}
 \frac{\lambda^2}{1+\lambda T}\ge 1.
\end{equation}
We set
 \[
  A=[-1/2,1/2]\]
  so that by \eqref{eq:cone} we have for $x\in A$,
\[
 X(T,x)\in \lt[-\frac{T}{2T_\lambda},\frac{T}{2 T_\lambda}\rt].
\]
Therefore
\begin{equation}\label{conseqcone}
 \sup_A |X(T,x)|\le \frac{T}{2 T_\lambda}\le \frac{\lambda T}{1+\lambda T}.
\end{equation}

We construct a competitor $\what{X}$ for the energy by setting for $x\in A$,
	\[
	\widehat{X}(t,x)= \begin{cases} 
	0 &\textrm{for } t\in[0,T],\\
		\lambda (t-T) x  &\textrm{for } t\in [T, T_\lambda].
	\end{cases}
	\]
	Notice that $\what{X}(x,0)=X(x,0)=0$, $\what{X}(x,T)=0$ and $\what{X}(x,T_\lambda)=X(x,T_\lambda)=x$. 
%	\begin{figure}\label{fig1}
%		\includegraphics[scale=.7]{DGRfig1.png}
%		\caption{}\label{DGRfig1}
%	\end{figure}
%\ed{Figure 3 is to be erased, right? We never mention it..}

	On the one hand, since $0$ was a branching point at time $t=0$, by the no-loop property, there are  at least two branches for every $t\in(0,T)$. On the other hand, for the competitor we constructed we have a single branch, that is
	\[
	 \int_{A}\int_{0}^T \frac{1}{\phi_X(X(t,x),t)} dt dx\ge 2 T \qquad \textrm{and }
	\qquad \int_{A}\int_{0}^T \frac{1}{\phi_{\what{X}}(\what{X}(t,x),t)}dt dx= T.\]
Moreover, 
\[
 \int_{A} \int_{0}^T |\partial_t \widehat{X}|^2=0 \le \int_{A} \int_{0}^T |\partial_t X|^2.
\]
By minimality of $X$ for $E(T,\lambda)$, 
\begin{equation*}
		\begin{aligned}
			0\le \cL_{\lambda,T,1}(\what{X})-\cL_{\lambda,T,1}(X)&\le -T +\int_{A} \int_{T}^{T_\lambda}|\partial_t \widehat{X}|^2-|\partial_t X|^2 dt dx \\
			&= -T +\lambda \int_{A} |x|^2-|x-X(T,x)|^2\,dx\\
			&= -T + \lambda\int_{A} 2 x X(T,x) -|X(T,x)|^2 dx\\
			&\le -T +\lambda \sup_{A}|X(T,x)|\\
			&\stackrel{\eqref{conseqcone}}{\le}-T + \frac{\lambda^2T}{1+\lambda T}.
			\end{aligned}
	\end{equation*}
	This concludes the proof of \eqref{firstbranchestim}. We now prove that for $i=1,\cdots, N$,
	\begin{equation}\label{secondbranchestim}
	 \frac{\lambda^2}{1+\lambda T}\phi_i\ges 1.
	\end{equation}
For definiteness we prove it for $\phi_1$. We set $\phi=\sum_{i=2}^N \phi_i$. We may assume that $\phi\ge 1/2$ since otherwise $\phi_1\ge 1/2$ and there is nothing to prove. We will make a construction similar to the previous one. It turns out that this construction is simpler to describe via  the Eulerian viewpoint. We set   for $t\in[0,T]$
\[
 \mu^1_t=X(t,\cdot)_\sharp \chi_{A_1} \qquad \textrm{and } \qquad \mu^2_t=X(t,\cdot)_\sharp \chi_{A\backslash A_1}.
\]
We thus have for $j\in\{1,2\}$ and  $t\in [0,T)$, $\mu^j_t=\sum_i \phi_i^j \delta_{X_i^j}$, with (recall that $X(T,\cdot)$ is monotone)
\[
 \cL_{\lambda,T,1}(X)=	I(\mu^1,(0,T))+I(\mu^2,(0,T))+ \lambda W_2^2(\mu^1_T,\chi_{A_1})+ \lambda W_2^2(\mu^2_T,\chi_{A\backslash A_1}).
\]
We then use as competitor $\tilde{\mu}_t=\phi^{-1}\mu^2_t $. Since
\[
 I(\tilde{\mu},(0,T))=P(\mu^2,(0,T))+\phi^{-1} \Ecin(\mu^2,(0,T))
\]
and since $P(\mu^1,(0,T))\ge T$ we find by minimality of $\mu$,
\begin{multline*}
 T+ \Ecin(\mu^2,(0,T))+ \lambda W_2^2(\mu^1_T,\chi_{A_1})+ \lambda W_2^2(\mu^2_T,\chi_{A\backslash A_1})\\
 \le \phi^{-1} \Ecin(\mu^2,(0,T))+\lambda W_2^2((1+\phi^{-1})\mu^2_T,\chi_A).
\end{multline*}
Using that (notice that there is actually equality)
\[
 W_2^2((1+\phi^{-1})\mu^2_T,\chi_A)\le W_2^2(\mu^2_T,\chi_{A\backslash A_1})+W_2^2(\phi^{-1}\mu^2_T,\chi_{A_1})
\]
and $\phi^{-1}-1=(\phi_1/\phi)$, we obtain after simplification that
\begin{equation}\label{keyestimbranch}
 T\le \frac{\phi_1}{\phi} \Ecin(\mu^2,(0,T)) +\lambda\lt[W_2^2(\phi^{-1}\mu^2_T,\chi_{A_1})-W_2^2(\mu^1_T,\chi_{A_1})\rt].
\end{equation}
We now estimate separately the two terms on the right-hand side of \eqref{keyestimbranch}. For the first one we use that by \eqref{conseqcone} we can use Lemma \ref{lem:consblock} with $r=\lambda T/(1+\lambda T)$ to construct a competitor $\what{\mu}$ with $\what{\mu}_0=\mu^2_0$, $\what{\mu}_T= \mu_T^2$ and
\[
 I(\what{\mu},(0,T))\les T +\frac{r^2 \phi}{T}\les T +\frac{\lambda^2 T}{(1+\lambda T)^2}\stackrel{\eqref{firstbranchestim}}{\les} \frac{\lambda^2 T}{1+\lambda T}.
\]
By minimality of $\mu$ we have
\begin{equation}\label{eq:firstestimrhskey}
 \Ecin(\mu^2,(0,T))\le I(\mu^2,(0,T))\le I(\what{\mu},(0,T))\les \frac{\lambda^2 T}{1+\lambda T}.
\end{equation}
For the second term we use triangle and Young inequalities to infer that for every $\delta\in(0,1)$,
\[
 W_2^2(\phi^{-1}\mu^2_T,\chi_{A_1})\le (1+\delta)W_2^2(\mu^1_T,\chi_{A_1}) +\frac{C}{\delta} W_2^2(\phi^{-1}\mu^2_T,\mu^1_T)
\]
and thus
\[
 W_2^2(\phi^{-1}\mu^2_T,\chi_{A_1})-W_2^2(\mu^1_T,\chi_{A_1})\les \delta W_2^2(\mu^1_T,\chi_{A_1}) +\delta^{-1} W_2^2(\phi^{-1}\mu^2_T,\mu^1_T).
\]
Using that
\[
 W_2^2(\mu^1_T,\chi_{A_1})\les \phi_1 \qquad \textrm{and } \qquad
 W_2^2(\phi^{-1}\mu^2_T,\mu^1_T)\stackrel{\eqref{conseqcone}}{\les} \phi_1 \lt(\frac{\lambda T}{1+\lambda T}\rt)^2,
\]
we get after optimization in $\delta$,
\[
 W_2^2(\phi^{-1}\mu^2_T,\chi_{A_1})-W_2^2(\mu^1_T,\chi_{A_1})\les \phi_1\min_{\delta\in(0,1)} \lt[ \delta  +\delta^{-1} \lt(\frac{\lambda T}{1+\lambda T}\rt)^2\rt]\les \phi_1 \frac{\lambda T}{1+\lambda T}.
\]
Plugging this and \eqref{eq:firstestimrhskey} into \eqref{keyestimbranch} yields (recall that $\phi\ge 1/2$)
\[
 T\les \phi_1 \frac{\lambda^2 T}{1+\lambda T}
\]
which after division by $T$ concludes the proof of \eqref{secondbranchestim}.

\end{proof}

We finally prove Theorem \ref{thm:was2}. For this we need the analog of Proposition \ref{prop:scaling} for $\Eund_{\lambda,T}$. Using the Lagrangian formulation we can easily provide a complementary lower bound.

\begin{proposition}\label{propstima}
	For every $\Phi, T,\lambda>0$,
	\begin{equation}\label{estimE}
	 E(\Phi,T,\lambda)\sim T+ \frac{ \lambda \Phi^3}{1+\lambda T}.
	\end{equation}
As a consequence,
	\begin{equation}\label{eqstima}
		\min_{\mu\in \calA_T^*} \Eund_{\lambda,T}(\mu) \sim T+\min\lt(T^{\frac{1}{3}},\lambda^{\frac{1}{3}}T^{\frac{2}{3}}\rt).
	\end{equation}
\end{proposition}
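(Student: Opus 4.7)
The plan is to prove \eqref{estimE} directly from the Lagrangian definition of $E(\Phi,T,\lambda)$, then deduce \eqref{eqstima} by combining it with Proposition \ref{prop:reduce}.

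For the lower bound in \eqref{estimE}, let $X\in\calC^{0}_{T_\lambda,\Phi}$ with $\cL_{\lambda,T,\Phi}(X)<\infty$. Mimicking the opening argument in the proof of Theorem \ref{barycenter}, for a.e.\ $t\in(0,T)$ the map $X(t,\cdot)$ takes values in an at-most-countable set $\{X_i\}$ with $A_i=X(t,\cdot)^{-1}(X_i)$ having positive measure, so
\[
\int_{-\Phi/2}^{\Phi/2}\frac{dx}{\phi_X(X(t,x),t)}=N(t)\ge 1,
\]
and integrating in time bounds the perimeter part of $\cL_{\lambda,T,\Phi}$ from below by $T$. Separately, Cauchy--Schwarz in $t$ combined with $X(0,x)=0$ and $X(T_\lambda,x)=x$ gives $\int_0^{T_\lambda}|\partial_t X(t,x)|^2\,dt\ge x^2/T_\lambda$, which integrated over $x\in[-\Phi/2,\Phi/2]$ yields the kinetic lower bound $\Phi^3/(12T_\lambda)=\lambda\Phi^3/(12(1+\lambda T))$. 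Summing the two contributions gives $E(\Phi,T,\lambda)\gtrsim T+\lambda\Phi^3/(1+\lambda T)$.

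For the upper bound I would exhibit two competitors. The ``frozen Dirac'' choice $X(t,x)=0$ on $[0,T]$, $X(t,x)=\lambda(t-T)x$ on $[T,T_\lambda]$ belongs to $\calC^{0}_{T_\lambda,\Phi}$ and direct computation gives $\cL_{\lambda,T,\Phi}(X)=T+\lambda\Phi^3/12$. Alternatively, Lemma \ref{lem:consblock} applied in $d=1$ with $r\sim\Phi$ provides a pair $(\mu,m)$ connecting $\Phi\delta_0$ to $\chi_{[-\Phi/2,\Phi/2]}$ on $[0,T]$ at cost $I(\mu,m)\lesssim T+\Phi^3/T$; extending by the identity map on $[T,T_\lambda]$ and invoking the second half of the proof of Theorem \ref{barycenter} (adapted to the one-sided Dirichlet problem defining $E$) produces an $X\in\calC^{0}_{T_\lambda,\Phi}$ realizing the Eulerian cost. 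Combining both competitors and using $\min(\lambda\Phi^3,\Phi^3/T)\sim\lambda\Phi^3/(1+\lambda T)$ yields the matching upper bound.

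For \eqref{eqstima}, Proposition \ref{prop:reduce} together with \eqref{estimE} reduces the minimization of $\Eund_{\lambda,T}$ to
\[
\min_{\sum_i\phi_i=1}\Bigl[NT+\tfrac{\lambda}{1+\lambda T}\sum_i\phi_i^3\Bigr],
\]
where $N=\#\{i:\phi_i>0\}$. For fixed $N$, the power-mean inequality gives $\sum_i\phi_i^3\ge N^{-2}$ with equality at $\phi_i=1/N$, so it remains to minimize $f(N)=NT+\lambda N^{-2}/(1+\lambda T)$ over integers $N\ge 1$. The unconstrained optimizer $N_\ast\sim\bigl(\lambda/[T(1+\lambda T)]\bigr)^{1/3}$ yields value $\sim T^{2/3}\lambda^{1/3}(1+\lambda T)^{-1/3}$; a direct case analysis in the four regimes defined by the signs of $N_\ast-1$ and $\lambda T-1$ produces the asserted scaling $T+\min(T^{1/3},\lambda^{1/3}T^{2/3})$.

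The most delicate step is the second competitor: one must verify that the Eulerian $\mu$ delivered by Lemma \ref{lem:consblock}, whose trace $\mu_T=\chi_{[-\Phi/2,\Phi/2]}$ is non-atomic, can be realized by a single monotone family of curves $X\in\calC^{0}_{T_\lambda,\Phi}$ at the expected cost even though $N(t)\to\infty$ as $t\to T^-$. This requires a mild extension of the Lagrangian reconstruction in Theorem \ref{barycenter} that allows unbounded branching approaching the boundary and unfolds the branching tree monotonically so that it meshes with the straight-line extension on $[T,T_\lambda]$. Once this bookkeeping is handled, the remaining computations are mechanical.
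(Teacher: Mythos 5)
Your proposal is correct and follows essentially the same route as the paper: the Lagrangian lower bound ($T$ from the multiplicity term plus $\Phi^3/(12\,T_\lambda)$ from the straight-line kinetic estimate), the two competitors (frozen Dirac, and Lemma \ref{lem:consblock} when $\lambda T\ges 1$) for the upper bound, and Proposition \ref{prop:reduce} combined with the power-mean inequality and optimization in $N$ for \eqref{eqstima}. The only cosmetic differences are that the paper first reduces to $\Phi=1$ via the rescaling of Lemma \ref{lem:rescalingE}, and the Eulerian-to-Lagrangian bookkeeping you flag as delicate is dispatched there by the remark following Proposition \ref{prop:reduce}, according to which the Eulerian formulation (hence the explicit branching construction of Lemma \ref{lem:consblock}) may be used directly to bound $E(\Phi,T,\lambda)$ from above.
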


\begin{proof}
We first prove \eqref{estimE}. By \eqref{eq:rescalingE} we may assume without loss of generality that $\Phi=1$. If $\lambda T\le 1$,
we simply take as competitor the non branching measure given by
\[
 X(t,x)=\begin{cases}
         0 & \textrm{if } t\in [0,T]\\
         \lambda (t-T)x & \textrm{if } t\in [T,T_\lambda].
        \end{cases}
\]
If instead $\lambda T\ge 1$ we apply Lemma \ref{lem:consblock} with $r=1$. This concludes the proof of 
\[
 E(T,\lambda)\les  T +\frac{\lambda}{1+\lambda T}.
\]
Regarding the lower bound we have by the Lagrangian formulation \eqref{lagrangianDirac},
\[
 E(T,\lambda)\ge  T+ \int_{-\frac{1}{2}}^{\frac{1}{2}} \frac{|x|^2}{T_\lambda} dx\ges T +\frac{\lambda}{1+\lambda T}.
\]
We now turn to \eqref{eqstima}. By Proposition \ref{prop:reduce},
\begin{multline*}
 \min_{\mu\in \calA_T^*} \Eund_{\lambda,T}(\mu)= 2\min_{\sum_{i=1}^N \phi_i=1} E(\phi_i,T,\lambda)\stackrel{\eqref{estimE}}{\sim} \min_{\sum_{i=1}^N \phi_i=1} \lt[ N T +\frac{\lambda }{1+\lambda T}\sum_{i=1}^N \phi_i^3\rt]\\
 \sim \min_{N\ge 1}\lt[ NT + \frac{\lambda }{1+\lambda T} N^{-2}\rt].
\end{multline*}
Taking
\[
 N=\max\lt(1, \lt(\frac{\lambda}{T(1+\lambda T)}\rt)^{\frac{1}{3}}\rt)
\]
concludes the proof of \eqref{eqstima}.

\end{proof}

We finally prove Theorem \ref{thm:was2} which states that if $\lambda\ll 1$ or $\lambda T\ll1$ then minimizers of $\Eund_{\lambda,T}$ are vertical segments uniformly spaced.

\begin{proof}[Proof of Theorem \ref{thm:was2}]
Let $\mu$ be a minimizer of $\Eund_{\lambda,T}$ and $X$ be the corresponding minimizer of $\cL_{\lambda,T}$. Let us first prove that if $\mu$ does not have branching points then it must be made of uniformly spaced vertical segments. By Proposition \ref{prop:reduce}, we get that it is made of vertical segments. If $\mu_t=\sum_{i=1}^N \phi_i \delta_{X_i}$ (where now $\phi_i$ and $X_i$ are constant in $t$),
\[
 \Eund_{\lambda,T}(\mu)=2 \lt[ NT +\frac{\lambda}{12}\sum_{i=1}^N \phi_i^3\rt].
\]
In order to minimize the right-hand side we must have $\phi_i=1/N$ (which means uniformly spaced segments) and
\[
 N\sim \max\lt(1,\frac{\lambda}{T}\rt)^{\frac{1}{3}}.
\]
We are thus left with the proof that if $\lambda \min(1, T)\ll 1$ then there can be no branching points. The case $\lambda\ll 1$ is directly taken care of by Proposition \ref{prop:refineddirac} since in that case \eqref{estimbranch} cannot hold. We now assume that $\lambda\ge 1$ with $\lambda T\ll 1$. Let $c_0$ be the implicit constant in \eqref{estimbranch} (notice that we actually only use \eqref{firstbranchestim} so we could take $c_0=1$). If $\mu_0=\sum_{i} \phi_i \delta_{X_i}$, we set
\[
 \calI=\{ i \ : \ \phi_i> c_0\lambda^{-2/3}\}.
\]
 By Proposition \ref{prop:reduce},
	\[
	\Eund_{\lambda,T}(\mu)=2\sum_{i} E(\phi_i,T,\lambda)=2\lt(\sum_{\calI^c}E(\phi_i,T,\lambda) +\sum_{\calI} E(\phi_i,T,\lambda)\rt).
	\]
	 We claim that $\calI=\emptyset$. This would conclude the proof since for  $i\in \calI^c$, i.e. $\phi_i\le c_0 \lambda^{-2/3}$, the minimizers of $E(\phi_i,T,\lambda)$ cannot have branching points by \eqref{estimbranch}.
	Let $\Phi= \sum_{\calI} \phi_i$ and assume that $\Phi\neq 0$. On the one hand, by Proposition \ref{prop:reduce},
	\[
	\min_{\sum \psi_i =\Phi} E(\psi_i,T,\lambda)=
    \sum_{\calI} E(\phi_i,T,\lambda)\stackrel{\eqref{estimE}}{\ges} \lambda \sum_{\calI} \phi_i^3\ges \lambda^{-\frac{1}{3}} \sum_{\calI} \phi_i= \lambda^{-\frac{1}{3}} \Phi.
	\]
On the other hand, using \eqref{estimE} with $\psi_i=1/N$, we have for every $N\in \N$,
\[
 \min_{\sum \psi_i =\Phi} E(\psi_i,T,\lambda)\les N T +\frac{\lambda \Phi^3}{N^2}.
\]
Taking $N\sim  \Phi \lambda^{1/3} T^{-1/3}\ges (\lambda T)^{-1/3}\gg1$ we have
\[
 \min_{\sum \psi_i =\Phi} E(\psi_i,T,\lambda)\les  \lambda^{\frac{1}{3}}T^{-\frac{2}{3}} \Phi.
\]
Recalling that we assumed $\lambda T\ll 1$ we have
\[
 \lambda^{\frac{1}{3}}T^{-\frac{2}{3}}\ll \lambda^{-\frac{1}{3}}
\]
from which we reach a contradiction.
\end{proof}    

\begin{remark}{\rm 
		In the regime $\lambda \min(1,T)\ges 1$, we expect that using the scaling properties from Lemma \ref{lem:rescalingE} and the recursive structure given by Proposition \ref{prop:reduce} one can adapt the proof of \cite{G} to exactly identify the minimizers of $E(\lambda,T)$. This goes however beyond the scope of the paper.
		}
\end{remark}

\section*{Acknowledments}
M. Goldman and B. Ruffini are partially supported by the ANR SHAPO. B. Ruffini was partially supported by the INDAM-GNAMPA project CUP-E55F22000270001. G. De Philippis has been partially supported by the NSF grant DMS 2055686 and by the Simons Foundation.

%%%%%%%%%%%%%%%%%%%%%%%%%%%%%%%%%%%%%%%%%%%%%%%%%%%%%%%%
%%%%%%%%%%%%%%%%%%%%%%%%%%%%%%%%%%%%%%%%%%%%%%%%%%%%%%%%
%%%%%%%%%%%%%%%%%%%%%%%%%%%%%%%%%%%%%%%%%%%%%%%%%%%%%%%%	
		%\bibliographystyle{amsplain}

	\bibliographystyle{acm}
\bibliography{OT}

% 	\begin{thebibliography}{99}
% 		
% 		\bibitem{AGS} Ambrosio, Luigi; Gigli, Nicola; Savar\'e, Giuseppe Gradient flows in metric spaces and in the space of probability measures, Second edition. Lectures in Mathematics ETH Z\"urich. Birkh\"auser Verlag, Basel, 2008. x+334 pp.
% 		
% 		\bibitem{AC} Agueh, Martial; Carlier, Guillaume Barycenters in the Wasserstein space. SIAM J. Math. Anal. 43 (2011), no. 2, 904-924.
% 		
% 		\bibitem{CGOS} Conti, Sergio; Goldman, Michael; Otto, Felix; Serfaty, Sylvia A branched transport limit of the Ginzburg-Landau functional. J. \'Ec. polytech. Math. 5 (2018), 317-375.
% 	
% 		\bibitem{privateCOS} Conti, Sergio;  Otto, Felix; Serfaty, Sylvia, Private communication ??
% 	
% 		\bibitem{G} M. Goldman 
% 		Self-similar minimizers of a branched transport functional (2018); (Accepted Paper: Indiana)
% 		
% 		\bibitem{M} Mattila, Pertti Geometry of sets and measures in Euclidean spaces. Fractals and rectifiability. Cambridge Studies in Advanced Mathematics, 44. Cambridge University Press, Cambridge, 1995. xii+343 pp.
% 		
% 		\bibitem{V1} Villani, C\'edric, Topics in optimal transportation. Graduate Studies in Mathematics, 58. American Mathematical Society, Providence, RI, 2003.
% 		
% 		
% 	\end{thebibliography}

\end{document}